\newtheorem{thmintro}{Theorem}
\newtheorem{theorem}{Theorem}[section]
\newtheorem{corollary}[theorem]{Corollary}
\newtheorem{lemma}[theorem]{Lemma}
\newtheorem{prop}[theorem]{Proposition}
\theoremstyle{definition}
\newtheorem{example}[theorem]{Example}
\newtheorem{definition}[theorem]{Definition}
\newtheorem{claim}[theorem]{Claim}
\newcommand{\NN}{\mathbb{N}}
\newcommand{\N}{\mathbb{N}}
\newcommand{\RR}{\mathbb{R}}
\newcommand{\CCC}{\mathcal{C}}
\newcommand{\FFF}{\mathcal{F}}
\newcommand{\co}{\colon\thinspace}
\DeclareMathOperator{\CGR}{CGR}
\DeclareMathOperator{\dist}{d}
\DeclareMathOperator{\Geo}{Geo}
\newcommand{\dc}{\dist}
\DeclareMathOperator{\Chor}{\CCC_{horo}}
\DeclareMathOperator{\Chb}{\CCC_{hb}}
\DeclareMathOperator{\Xs}{X_{s,\eta}^{(0)}}
\DeclareMathOperator{\im}{im}
\DeclareMathOperator{\proj}{proj}
\begin{document}

\renewcommand{\proofname}{{\bf Proof}}

\title{Hyperfiniteness of boundary actions of hyperbolic groups}

\author[T.~Marquis]{Timoth\'ee \textsc{Marquis}$^{(*)}$}
\address{UCLouvain, IRMP-MATH, Chemin du Cyclotron 2, 1348
  Louvain-la-Neuve, Belgium}
\email{timothee.marquis@uclouvain.be}
\thanks{$^{(*)}$Corresponding author (ORCID 0000-0003-0541-8302, timothee.marquis@uclouvain.be); F.R.S.-FNRS Postdoctoral Researcher}
\author[M.~Sabok]{Marcin \textsc{Sabok}$^\dagger$}
\address{McGill University, Department of Mathematics and
  Statistics, 805 Sherbrooke Street W, Montreal, QC, H3A 0B9
  Canada and Instytut Matematyczny PAN, Sniadeckich 8,
  00-656 Warszawa, Poland} \email{marcin.sabok@mcgill.ca}
\thanks{$^\dagger$ This research was partially supported by
  the NSERC through the \textit{Discovery Grant}
  RGPIN-2015-03738, by the FRQNT (Fonds de recherche du
  Qu\'{e}bec) grant \textit{Nouveaux chercheurs}
  2018-NC-205427 and by the NCN (National Science
  Centre, Poland) through the grants \textit{Harmonia}
  no. 2015/18/M/ST1/00050 and 2018/30/M/ST1/00668}

\begin{abstract}
  We prove that for every finitely generated hyperbolic group $G$, the action of $G$ on its Gromov boundary induces a hyperfinite equivalence relation.
\end{abstract}

\maketitle

\section{Introduction}
\label{sec:introduction}
The complexity theory of Borel equivalence relations has
been developed in the last forty years as an attempt to measure
the difficulty of classification problems (\cite{Kan08}). A particularly
active part of the theory is concerned with the structure
of countable Borel equivalence relations. As all countable
Borel equivalence relations are induced by Borel actions of
countable groups, this has a close relationship with the study of
Borel or measurable group actions on standard Borel spaces.

The interplay between the structure of measurable actions of
countable (or finitely generated) groups and the algebraic
properties of the groups is one of the main themes appearing
at the intersection of group theory, ergodic theory and
descriptive set theory. The classical result \cite[Theorem 7.2.4]{gao} of
Slaman--Steel (\cite{SS88}) and Weiss (\cite{Wei84}) characterises the
equivalence relations induced by the actions of $\mathbb{Z}$
(i.e. by a single automorphism) as the \textbf{hyperfinite}
equivalence relations: those which can be written as an
increasing union of finite equivalence relations. This
notion has been studied both from the Borel and measurable
point of view.

The above characterisation holds in a pure Borel context,
where there is no probability measure around. Given a Borel
probability measure $\mu$, an equivalence relation is
\textbf{$\mu$-hyperfinite} if it is hyperfinite restricted
to a certain subset of measure $1$.  In the presence of an invariant probability measure $\mu$, every amenable group
action induces a $\mu$-hyperfinite equivalence relation. It is one of the notorious open problems whether
the latter holds in the pure Borel setting.

Hyperbolic groups and spaces were introduced and studied by
Gromov (see \cite{gromov}) and have attracted a lot of attention in geometric
group theory. To every (geodesic) proper hyperbolic metric space $X$ one associates a compact metric space $\partial X$, called its \textbf{Gromov boundary}, which is a quasi-isometry invariant of $X$. One then also defines the Gromov boundary $\partial G$ of a finitely generated hyperbolic group $G$ as the Gromov boundary of its Cayley graph (with respect to a finite generating set), and the $G$-action on its Cayley graph naturally induces a $G$-action by homeomorphisms on $\partial G$. 

Boundary actions of hyperbolic groups have also been studied from
the point of view of their complexity. In the case of the free group $G$,
Connes, Feldman and Weiss \cite[Corollary~13]{cfw} and Vershik
\cite{vershik} showed that the action of $G$ on its Gromov boundary $\partial G$ is
$\mu$-hyperfinite for every Borel quasi-invariant
probability measure $\mu$ on $\partial G$. This was later generalised by Adams in \cite{adams} to all finitely generated hyperbolic groups.

On the other hand, in \cite[Corollary~8.2]{djk} Dougherty, Jackson and
Kechris proved that the boundary action of the free group $F_2$ is
hyperfinite (in the pure Borel sense) by studying the
so-called \textbf{tail equivalence relation}. More recently,
Huang, Sabok and Shinko showed in \cite{HSS17} that every cubulated
hyperbolic group $G$ (i.e. $G$ acts geometrically on a
hyperbolic CAT(0) cube complex $X$) has a hyperfinite boundary
action. Their proof is based on an analysis of geodesic ray
bundles in hyperbolic CAT(0) cube complexes. More precisely,
given a point $\eta\in\partial X$ and a vertex $x$ of the
1-skeleton $X^{(0)}$ of $X$, the \textbf{geodesic ray bundle}
$\Geo(x,\eta)$ consists of all vertices that appear on a
geodesic ray starting at $x$ and converging to $\eta$. The hyperfiniteness of the $G$-action on $\partial X$ (or equivalently, on $\partial G$ since $X$ and $G$ are quasi-isometric) is then established as a consequence of the following geometric condition (see \cite[Theorem~1.4]{HSS17}): for every $x,y\in X^{(0)}$ and $\eta\in\partial X$, the sets $\Geo(x,\eta)$ and $\Geo(y,\eta)$ have a finite symmetric difference.

In \cite[Question~1.5]{HSS17} it was asked whether this condition holds in Cayley graphs of arbitrary
hyperbolic groups. However, Touikan constructed in \cite{touikan}
hyperbolic groups (or rather, appropriate sets of generators
for the free group) with an associated Cayley graph in which this condition does not
hold. On the other hand, even though this condition turned out to be quite restrictive,
Marquis provided in \cite{geohyperbolic} a large class of examples
(including groups having the Kazhdan property) which act geometrically on
hyperbolic graphs where the condition does hold, thereby establishing the hyperfiniteness of the corresponding boundary actions.
 
In this paper we solve the problem of hyperfiniteness of
boundary actions of hyperbolic groups in full generality, by proving the
following unconditional theorem.

\begin{thmintro}\label{thmintro:mainthm}
Let $G$ be a finitely generated hyperbolic group. Then the action of $G$ on its Gromov boundary $\partial G$ is hyperfinite.
\end{thmintro}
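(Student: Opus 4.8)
The plan is to follow the general strategy pioneered in \cite{HSS17}: reduce hyperfiniteness of the boundary action to a geometric statement about geodesic ray bundles $\Geo(x,\eta)$ in a Cayley graph $X$ of $G$, and then prove that geometric statement in full generality. Since Touikan's examples in \cite{touikan} show that the symmetric difference $\Geo(x,\eta)\triangle\Geo(y,\eta)$ need \emph{not} be finite, the first step is to isolate the \emph{correct} replacement for this condition. The natural candidate is a \emph{coarse} version: there is a function $R = R(\eta)$, or better a uniform bound depending only on $\dc(x,y)$ and $\delta$, such that $\Geo(x,\eta)$ and $\Geo(y,\eta)$ lie within finite Hausdorff distance of each other, or — what turns out to be the workable formulation — that after passing to an appropriate quotient/thickening the two bundles agree up to a finite set. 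One then wants to encode, for each $\eta$, a Borel linear(-ish) order or a Borel $\mathbb Z$-like structure on (a coarsening of) the bundles $\Geo(x,\eta)$ that is $G$-equivariant up to finite error, and invoke the Slaman--Steel--Weiss characterisation together with the standard fact that an increasing union of finite Borel equivalence relations is hyperfinite, plus the permanence of hyperfiniteness under finite-to-one Borel reductions and countable increasing unions.

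Concretely, I would proceed as follows. \textbf{Step 1 (Geometric core).} Fix $\eta\in\partial G$ and work in the $\delta$-hyperbolic graph $X$. For $x\in X^{(0)}$, analyse the set of geodesic rays from $x$ to $\eta$ via the Busemann/horofunction associated to $\eta$; the union of such rays, $\Geo(x,\eta)$, is contained in a bounded neighbourhood of any single such ray, and two rays to $\eta$ from points at distance $\le D$ become $2\delta$-close past a bounded initial segment. The key lemma to prove is: \emph{for all $x,y$, the bundle $\Geo(y,\eta)$ is contained in the $C(\delta, \dc(x,y))$-neighbourhood of $\Geo(x,\eta)$}, i.e.\ the bundles have uniformly bounded Hausdorff distance as $x,y$ range over pairs at bounded distance. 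This is where the hyperbolicity is used essentially and where Touikan's obstruction is sidestepped — we do not claim finite symmetric difference, only bounded Hausdorff distance. \textbf{Step 2 (From geometry to a hyperfinite structure).} Using Step 1, build for each $\eta$ a Borel assignment $x\mapsto$ (a bounded-diameter ``slice'' structure) on $\Geo(x,\eta)$ so that the nested/ordered structure of these slices along the ray to $\eta$ gives, on the orbit equivalence relation restricted to $\partial G$, an increasing union of Borel equivalence relations with uniformly finite classes. Equivalently, exhibit a Borel cocycle or a Borel proper function realising the orbit relation as a countable increasing union of finite Borel subequivalence relations; the uniform bound from Step 1 guarantees finiteness of the classes at each finite stage. \textbf{Step 3 (Conclude).} Invoke that an increasing union of finite Borel equivalence relations is hyperfinite, that hyperfiniteness passes to the full orbit equivalence relation via the Slaman--Steel--Weiss machinery, and handle the (measure-zero or small) exceptional set of $\eta$ where rays behave degenerately by absorbing it using closure of hyperfiniteness under countable unions and Borel reductions.

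The main obstacle is \textbf{Step 2}: bounded Hausdorff distance between the bundles $\Geo(x,\eta)$ is a much weaker input than the finite symmetric difference used in \cite{HSS17}, and turning a ``coarse'' agreement of geodesic ray bundles into an \emph{honest} increasing union of \emph{finite} Borel equivalence relations requires a genuinely new device — one must quotient out the bounded ambiguity in a Borel, $G$-equivariant way, perhaps by choosing canonical representatives via a Borel linear order on $X^{(0)}$ and a careful ``rounding'' of the horofunction to integer values, while controlling how the rounding interacts with the $G$-action so that no infinite classes are created. A secondary difficulty is uniformity: the constant $C(\delta,\dc(x,y))$ must not blow up along the ray to $\eta$, which should follow from the thin-triangles condition but needs a careful quantitative argument. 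Once the coarse geometry of Step 1 is packaged correctly, Step 3 is routine descriptive set theory.
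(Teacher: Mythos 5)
There is a genuine gap, and it sits exactly where you flag it yourself: Step 2. Your Step 1 (uniformly bounded Hausdorff distance between $\Geo(x,\eta)$ and $\Geo(y,\eta)$) is essentially immediate from $\delta$-hyperbolicity and is far too weak to drive the Dougherty--Jackson--Kechris/HSS-style endgame. That argument needs, for each $\eta$, tail data that is \emph{literally} $G$-equivariant up to a finite error: one extracts lexicographically least types of long geodesic segments emanating from vertices of a bundle and the set $T^\eta_n$ of vertices realising them, and the proof that $\eta\mapsto (f_n(H_n(\eta)))_n$ is a homomorphism to $E_1$ requires that for $g\eta=\theta$ one eventually has $gT^\eta_n=T^\theta_n$ on the nose. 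Finite symmetric difference of the bundles gives this; bounded Hausdorff distance does not, because coarsely close but combinatorially different vertex sets produce different least types and different minimal witnesses, and no amount of ``rounding horofunctions'' or choosing canonical representatives by a Borel order is known to repair this while keeping classes finite. The actual solution in the paper is not a coarsening at all: one replaces $\Geo(x,\eta)$ by a carefully chosen subset $\Geo_1(x,\eta)$, built from the horoboundary (limits $\xi$ of geodesic rays in the horofunction compactification), combinatorial sectors $Q(x,\xi)$, and ``straight'' rays/special vertices, and one proves the \emph{exact} finite symmetric difference property $\Geo_1(x,\eta)\triangle\Geo_1(y,\eta)$ finite, together with the fact that every ray in $\CGR(x,\eta)$ eventually lies in $\Geo_1(x,\eta)$. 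This construction is the new device your proposal postulates but does not supply; without it (or an equivalent), Steps 2--3 cannot be carried out.

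Two smaller points. First, your Step 3 alludes to a ``measure-zero or small'' exceptional set; the problem is purely Borel, so there is no measure to exploit. In the paper the exceptional set is $Z=\{\eta : k^\eta_n\not\to\infty\}$, where a single geodesic ray of the limiting type exists; on its $E$-saturation the orbit relation is shown to be \emph{smooth} by an explicit Borel reduction $\eta\mapsto g_\eta^{-1}\eta$, and hyperfiniteness of the two pieces is then combined. Second, the descriptive-set-theoretic bookkeeping is not quite ``routine'': one needs the Lusin--Novikov theorem to select the finitely many horoboundary points $\xi_i(\eta)$ in a Borel way, a reflection argument to enlarge the analytic finite-class relation on $\im H_n$ to a Borel one, and the fact that countable hypersmooth relations are hyperfinite; these are standard tools, but the Borelness of the sets involving $\Xi(\eta)$, $Q(x,\xi)$ and $\Geo_1(e,\eta)$ requires genuine analytic-plus-coanalytic arguments that your sketch leaves entirely open.
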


Note that for a finitely generated hyperbolic group $G$, any geometric action
of $G$ on a space $X$ induces a boundary action on
$\partial X$ and for all such actions there exists a
$G$-equivariant homeomorphism of $\partial X$ and $\partial G$
\cite{gromov}. Therefore, by Theorem~\ref{thmintro:mainthm}, all such
boundary actions of $G$ also induce hyperfinite
equivalence relations.

To prove Theorem~\ref{thmintro:mainthm}, we establish a new hyperfiniteness criterion of geometric nature and we show that it holds in every (uniformly) locally finite hyperbolic graph. More precisely, given such a graph $X$ with set of vertices $X^{(0)}$, we construct for each $x\in X^{(0)}$ and $\eta\in\partial X$ a subset $\Geo_1(x,\eta)$ of $\Geo(x,\eta)$ containing a sub-geodesic ray of every geodesic ray from $x$ to $\eta$, and such that $\Geo_1(x,\eta)$ and $\Geo_1(y,\eta)$ have a finite symmetric difference for every $x,y\in X^{(0)}$ (see Proposition~\ref{prop:Geo1_big} and Theorem~\ref{theorem:FSD}). We then show, in \S\ref{sec:endgame}, that if $X$ is the Cayley graph of a finitely generated hyperbolic group $G$, then this property implies the hyperfiniteness of the boundary action of $G$. 

As illustrated by Touikan's examples \cite{touikan} (see also Examples~\ref{example:bad_ladder} and \ref{example:bad_ladder2} below), establishing the finite symmetric difference property for the sets $\Geo_1(x,\eta)$ in \emph{arbitrary} (hyperbolic, uniformly locally finite) graphs $X$ is a rather subtle problem. The key idea to tackle this problem is to consider the \emph{horoboundary} of $X$, which is a refinement of $\partial X$ allowing for a better control of geodesic rays (and generalising the \emph{combinatorial compactification} of the chamber graph of a building introduced in \cite{CL11}), and to prove the existence of so-called \emph{straight} geodesic rays (see Definition~\ref{definition:straight}).

\subsection*{Acknowledgement}
We would like to thank the referee for many valuable comments.


\section{Preliminaries}\label{section:Preliminaries}

\subsection{Graphs}
Throughout this paper, $X$ will denote a connected locally finite graph, with vertex set $X^{(0)}$ and edge set $E(X)\subseteq X\times X$. 

Two distinct vertices $x,y\in X^{(0)}$ are {\bf adjacent} if $\{x,y\}\in E(X)$. A {\bf path} in $X$ is a (possibly infinite) ordered sequence $x_0,x_1,\dots$ of vertices of $X$ such that $x_i$ and $x_{i+1}$ are distinct and adjacent for each $i$. The path metric on $X^{(0)}$ will be denoted $\dc\co X^{(0)}\times X^{(0)}\to\NN$. Given two vertices $x,y\in X^{(0)}$, we let $$\Gamma(x,y):=\{z\in X^{(0)} \ | \ \dc(x,y)=\dc(x,z)+\dc(z,y)\}$$ be the union of all geodesic paths from $x$ to $y$. Given a (finite) path $\Gamma_1$ ending at some vertex $x\in X^{(0)}$, and a (possibly infinite) path $\Gamma_2$ starting at $x$, we denote by $\Gamma_1\cdot \Gamma_2$ the path obtained by concatenating $\Gamma_1$ with $\Gamma_2$.

A {\bf geodesic ray} is an infinite geodesic path $(x_n)_{n\in\NN}\subseteq X^{(0)}$. Two geodesic rays $r,r'$ are {\bf asymptotic} if they are at bounded Hausdorff distance $\dist_H(r,r')$. The {\bf visual boundary} $\partial X$ of $X$ is the set of equivalence classes of asymptotic geodesic rays. A {\bf CGR from $x\in X^{(0)}$ to $\eta\in\partial X$} is a geodesic ray $(x_n)_{n\in\NN}\subseteq X^{(0)}$ starting at $x$ and pointing towards $\eta$ (i.e. belonging to the equivalence class $\eta$); their set is denoted $\CGR(x,\eta)$. [The letters CGR stand for ``combinatorial geodesic ray'', emphasising the combinatorial nature of the metric.] We also let $\Geo(x,\eta)\subseteq X^{(0)}$ denote the union of all CGR from $x\in X^{(0)}$ to $\eta\in\partial X$. 

\subsection{Cayley graphs}
If $X$ is the Cayley graph of a finitely generated group $G$ with respect to a finite symmetric generating set $S$ (so that $X^{(0)}=G$), then given a path $\Gamma=(x_n)_{0\leq n\leq\ell}$ (resp. $\Gamma=(x_n)_{n\in\NN}$) in $X$, we define the {\bf type} $\mathrm{typ}(\Gamma)$ of $\Gamma$ as $$\mathrm{typ}(\Gamma):=(x_n^{-1}x_{n+1})_{0\leq n\leq \ell-1}\in S^{\ell}, \quad\textrm{resp.} \ \mathrm{typ}(\Gamma):=(x_n^{-1}x_{n+1})_{n\in\NN}\in S^{\NN}.$$

\subsection{Hyperbolicity}\label{subsection:Hyperbolicity}
The graph $X$ is called {\bf (Gromov) hyperbolic} if there is some $\delta>0$ such that every geodesic triangle $\Delta$ in $X$ is $\delta$-slim, that is, such that each side of $\Delta$ is contained in the $\delta$-neighbourhood of the other two sides. In that case, we also call $X$ {\bf $\delta$-hyperbolic}. A finitely generated group $G$ is called {\bf hyperbolic} if it has a hyperbolic Cayley graph (with respect to a finite generating set $S$); since hyperbolicity is a quasi-isometry invariant, this does not depend on the choice of $S$.

The key property of hyperbolic graphs that we will need is the following.

\begin{lemma}\label{lemma:hyperbolic_basic_prop}
Assume that $X$ is $\delta$-hyperbolic for some $\delta>0$. Let $x\in X^{(0)}$, $\eta\in\partial X$ and $\Gamma=(x_n)_{n\in\NN},\Gamma'=(x_n')_{n\in\NN}\in\CGR(x,\eta)$. Then $\dc(x_n,x_n')\leq 2\delta$ for all $n\in\NN$. In particular, $\dist_H(\Gamma,\Gamma')\leq 2\delta$.
\end{lemma}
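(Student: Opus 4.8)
The statement asserts that two CGRs $\Gamma = (x_n)$ and $\Gamma' = (x_n')$ from the same vertex $x$ to the same boundary point $\eta$ stay uniformly $2\delta$-close pointwise: $\dc(x_n, x_n') \le 2\delta$ for all $n$. The plan is the standard ``thin bigon'' argument. Fix $n \in \NN$. Since $\Gamma$ and $\Gamma'$ are asymptotic, there is some $R > 0$ (depending a priori on $n$, but that is harmless) with $\dc(x_m, x_m') \le \dist_H(\Gamma,\Gamma') =: R$ eventually; more usefully, pick $m \gg n$ large enough that $\dc(x_m, x_m') \le R$ for a fixed bound $R$ and also $m - n > R$. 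Now consider the geodesic triangle with vertices $x = x_0 = x_0'$, $x_n$ (wait --- better to use a triangle with vertices $x$, $x_m$, $x_m'$): take the two sides to be the initial segments $\Gamma_{[0,m]} = (x_0, \dots, x_m)$ and $\Gamma'_{[0,m]} = (x_0', \dots, x_m')$, both geodesics of length $m$ from $x$, and the third side a geodesic $[x_m, x_m']$ of length $\le R$.

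By $\delta$-slimness, the vertex $x_n$ on the side $\Gamma_{[0,m]}$ lies within $\delta$ of the union of the other two sides. Since $\dc(x_n, x_m) = m - n > R \ge \dc(x_m, x_m')$, the vertex $x_n$ cannot be within $\delta$ of the short side $[x_m, x_m']$ once we also arrange $m - n > R + \delta$ (every point of $[x_m,x_m']$ is within $R$ of $x_m$, hence within $m-n-\delta$... let me instead just say: choose $m$ so that $m - n$ exceeds $R + \delta$, so $x_n$ is at distance $> \delta$ from every vertex of $[x_m, x_m']$). Hence $x_n$ is within $\delta$ of some vertex $x_k'$ on $\Gamma'_{[0,m]}$. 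Then $|k - n| = |\dc(x_0', x_k') - \dc(x_0, x_n)| \le \dc(x_n, x_k') \le \delta$ since both segments are geodesics from the common basepoint $x$, and therefore $\dc(x_n, x_n') \le \dc(x_n, x_k') + \dc(x_k', x_n') \le \delta + |k-n| \le 2\delta$. The ``in particular'' clause is then immediate: $\dist_H(\Gamma, \Gamma') \le \sup_n \dc(x_n, x_n') \le 2\delta$.

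The only mildly delicate point --- and the one I would be most careful about --- is the legitimacy of choosing $m$ large: this uses precisely the hypothesis that $\Gamma, \Gamma'$ are asymptotic (bounded Hausdorff distance), which gives a uniform $R$ with $\dc(x_m, x_m') \le R$ for all large $m$ (here one uses that $\Gamma, \Gamma'$ are both geodesic rays, so a Hausdorff bound translates into a pointwise bound with at most a bounded shift, and since they share the basepoint there is no shift at all). Everything else is routine triangle-slimness bookkeeping, and the constant $2\delta$ is what falls out of combining one application of slimness ($\delta$) with the geodesic-from-basepoint comparison of indices ($\delta$).
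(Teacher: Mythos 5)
Your argument is correct, and it is essentially the standard proof of the fact the paper invokes: the paper's own ``proof'' is a one-line citation of \cite[Lemma~III.3.3]{BHCAT0}, and your slim-triangle/thin-bigon argument (triangle on $x$, $x_m$, $x_m'$ with $m-n$ large compared to the uniform bound on $\dc(x_m,x_m')$, then one application of $\delta$-slimness plus the index comparison along geodesics from the common basepoint) is exactly the argument behind that cited lemma. So you lose nothing by making it self-contained, and the constant $2\delta$ comes out the same way.

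One small inaccuracy, which is harmless: the pointwise bound $\dc(x_m,x_m')\leq \dist_H(\Gamma,\Gamma')$ need not hold, and ``since they share the basepoint there is no shift at all'' is not quite right --- if $x_m$ is within $R:=\dist_H(\Gamma,\Gamma')$ of $x_k'$, you only get $|k-m|\leq R$, hence $\dc(x_m,x_m')\leq 2R$. Since your argument only uses the existence of \emph{some} uniform bound $R'$ (choosing $m$ with $m-n>R'+\delta$), replacing $R$ by $2R$ fixes this with no other change.
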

\begin{proof}
This follows from \cite[Lemma~III.3.3]{BHCAT0}.
\end{proof}

If $X$ is hyperbolic, one can equip its visual boundary $\partial X$ with a compact (metrisable) topology, defined as follows: given a base point $x\in X$, a sequence $\eta_n\in\partial X$ converges to some $\eta\in\partial X$ if and only if there exist CGR $\Gamma_n\in\CGR(x,\eta_n)$ such that every subsequence of $(\Gamma_n)_{n\in\NN}$ subconverges (i.e. admits a subsequence that converges) to a CGR $\Gamma\in\CGR(x,\eta)$. The resulting topological space, which we again denote by $\partial X$, is called the  {\bf Gromov boundary} of $X$ (see \cite[Section~III.3]{BHCAT0}).

\subsection{Horoboundary}
Throughout this paper, we fix a base point $z_0\in X^{(0)}$. Set $$\FFF(X,z_0):=\{f\co X^{(0)}\to\RR \ | \ |f(x)-f(y)|\leq \dc(x,y) \ \forall x,y\in X^{(0)}, \quad f(z_0)=0\}.$$ We equip $\FFF(X,z_0)$ with the topology of pointwise convergence. To each $x\in X^{(0)}$, we attach the function
$$f_x=f_{x,z_0}\co X^{(0)}\to\RR:y\mapsto \dc(x,y)-\dc(x,z_0),$$
so that $f_{x,z_0}\in \FFF(X,z_0)$. The map 
$$\iota\co X^{(0)}\to \FFF(X,z_0):x\mapsto f_{x,z_0}$$
is then continuous and injective, and we identify $X^{(0)}$ with its image (see e.g. \cite[\S 3]{CL11}). The {\bf horofunction compactification} $\Chor(X)$ of $X$ is the closure of $X^{(0)}$ in $\FFF(X,z_0)$; it is independent of the base point $z_0$ (as $\FFF(X,z_0)$ can be canonically identified with the space of $1$-Lipschitz functions $X^{(0)}\to\RR$, modulo the constant functions). The {\bf horoboundary} of $X$ is $\Chb(X):=\Chor(X)\setminus X^{(0)}$.

\subsection{Descriptive set theory}\label{sec:borel-equiv-relat}
A \textbf{standard Borel space} is a set $Z$ equipped with a
$\sigma$-algebra which can be obtained as the $\sigma$-algebra
of Borel sets from some Polish topology on $Z$. Examples of
standard Borel spaces include the discrete finite sets,
the discrete infinite countable set or the Cantor set
$2^\N$. Given a
standard Borel space $Z$, by a \textbf{Borel} set in $Z$ we
mean any set which belongs to the $\sigma$-algebra. 
 If $Z,Y$ are standard Borel spaces, there is a canonical standard
Borel space structure on $Z\times Y$, as well as on
$Z^{<\N}:=\bigcup_{n\in\N}Z^n$ and $Z^{\NN}$.

We will use the simple observation that the sets of the form
$\{x\in Z \ | \  \phi(z)\}$ are Borel if 
$\phi(x)$ is a first-order formula where the predicates correspond to closed
or open subsets of Polish spaces and all the quantifiers
range over finite or countable sets. We provide the following
example for the benefit of readers not familiar with descriptive set theory.

\begin{example}
Let $G$ be a finitely generated group with a finite
symmetric generating set $S$. Fix a total order on $S$
and consider the induced lexicographical order $<$ on $S^n$. Let
$n\in\NN$ and $L_n\subseteq
G^\N\times G^n$ be the set of pairs $(\Gamma,s)$ such that
$\Gamma$ is a geodesic ray starting at $e$ (the neutral element of $G$) and
$s\in S^n$ is the lexicographically least string which
appears infinitely often as a substring of $\mathrm{typ}(\Gamma)\in S^{\NN}$. Then the set $L_n$ is Borel in $G^\N\times G^n$ as
witnessed by the following formula $\phi$: writing $\Gamma=(\Gamma_m)_{m\in\NN}$ and $s=(s_m)_{0\leq m< n}$, we have $(\Gamma,s)\in L_n$ if and only if 
\begin{align*}
\phi(\Gamma,s)\equiv\Gamma\mbox{ is a geodesic path from $e$ and }
\forall k\in\N \, \exists l>k\ \forall i<n\ 
\Gamma_{l+i}^{-1}\Gamma_{l+i+1}=s_i\\ \mbox{ and }\forall t\in S^n \mbox{ if
}t<s\mbox{ then }\exists k\in \N \, \forall l>k \ \exists i<n\
\Gamma_{l+i}^{-1}\Gamma_{l+i+1}\not=t_{i}.
\end{align*}

Note that the predicate stating that $\Gamma$ is a geodesic path from $e$
corresponds to a closed set $A$ in $G^\N$, the conditions
$\Gamma_{l+i}^{-1}\Gamma_{l+i+1}=s_i$ on $(\Gamma,s)$ describe open and closed sets $B_{l,i}$ in
$G^\N\times S^n$, and the conditions $\Gamma_{l+i}^{-1}\Gamma_{l+i+1}\not=t_{i}$ on $\Gamma$ describe open and closed sets $C_{l,i,t}$ in $G^{\NN}$. Hence, setting $D_t:=\{s\in S^n \ | \ s\leq t\}\subseteq G^n$ for each $t\in S^n$, and denoting by $\proj_{G^\NN}$ and $\proj_{G^n}$ the natural projections from $G^{\NN}\times G^n$ to $G^{\NN}$ and $G^n$, respectively, we have an explicit Borel description of $L_n$ as
$$L_n=\proj_{G^\NN}^{-1}(A)\cap \bigcap_{k\in\NN}\bigcup_{l>k}\bigcap_{i<n}B_{l,i} \cap \bigcap_{t\in S^n}\Big(\proj_{G^n}^{-1}(D_t)\cup \bigcup_{k\in\NN}\bigcap_{l>k}\bigcup_{i<n}\proj_{G^{\NN}}^{-1}(C_{l,i,t})\Big).$$
\end{example}

In the above example we refer to the formula $\phi$ as 
a {\bf Borel definition} of $L_n$. In Section~\ref{sec:endgame} we
will use more sophisticated computations similar to the one
in the example above.

An \textbf{analytic} subset of a standard Borel space $Z$ is an
image of a Borel set by a Borel function (or equivalently, a
projection to $Z$ of a Borel set $B\subseteq Z\times Y$ for some standard Borel space $Y$). In
other words, a subset $A$ of $Z$ is analytic if it can be
written as $\{z\in Z \ | \ \exists y\in Y\ (z,y)\in B\}$ for a
Borel set $B\subseteq Z\times Y$. So analytic subsets are
those sets which are definable by formulas which have at
most one existential quantifier whose range is an
uncountable standard Borel space. Analytic sets are closed
under countable unions and intersections but not under
complements. In other words, if $A_n\subseteq Z$ are
analytic for $n\in\N$, then
$\{z\in Z \ | \  \exists n\in\NN \ z\in A_n\}$ and
$\{z\in Z \ | \  \forall n\in\NN\ z\in A_n\}$ are also analytic. A
subset of a standard Borel space is \textbf{coanalytic} if
its complement is analytic. In other words, coanalytic sets
are those sets which are definable by formulas which have at
most one universal quantifer whose range is an uncountable
Borel space. A classical result of Souslin \cite[Theorem
14.11]{kechris} states that a set is Borel if and only if it
is both analytic and coanalytic.

Given a Borel set $B\subseteq Z\times Y$ with countable
vertical sections $B_z=\{y\in Y \ | \ (z,y)\in B\}$, the projection
$\mathrm{proj}_{Z}(B)=\{z\in Z \ | \ \exists y
\in Y\ (z,y)\in B\}$ is still Borel by the {\bf Lusin--Novikov
theorem} \cite[Theorem 18.10]{kechris}. The same theorem says that in that case $B$ contains the
graph of  a Borel function from $\mathrm{proj}_{Z}(B)$ to $Y$. This
implies that $B$ is the
union of countably many graphs of Borel functions from
$\mathrm{proj}_{Z}(B)$
to $Y$, and if the sections $B_z$ ($z\in Z$) are of size at most $r$, then
$B$ is the union of $r$ Borel functions. This is the version of the Lusin--Novikov theorem that we will use in Section~\ref{sec:endgame}.

A \textbf{Borel equivalence relation} is an equivalence
relation $E$ on a standard Borel space $Z$ such that $E$ is
Borel as a subset of $Z\times Z$. If $A\subseteq Z$, we will write $E|A$ for the restriction of $E$ to $A$.
An equivalence relation is
\textbf{countable} (resp. \textbf{finite}) if all of its equivalence classes are
countable (resp. finite). An equivalence relation is
\textbf{hyperfinite} if it can be written as an increasing
union of finite equivalence relations.

Given two Borel equivalence relations $E$ on $Z$ and $F$ on
$Y$, a Borel function $f: Z\to Y$ is a {\bf homomorphism} if $z_1\mathrel{E}z_2$ $\implies$
$f(z_1)\mathrel{F}f(z_2)$ for all $z_1,z_2\in Z$. The Borel map $f$ is a \textbf{Borel
  reduction} from $E$ to $F$ if $z_1\mathrel{E}z_2$ $\iff$
$f(z_1)\mathrel{F}f(z_2)$ for all $z_1,z_2\in Z$. We say that a Borel equivalence
relation $E$ is \textbf{Borel reducible} to a Borel
equivalence relation $F$ if there exists a Borel reduction
from $E$ to $F$.

A Borel equivalence relation $E$ is \textbf{smooth} if it is
Borel reducible to the identity relation on a standard Borel
space. Every finite Borel equivalence relation is
smooth. The simplest non-smooth countable Borel equivalence
relation is $E_0$ defined on $2^\N$ by $x\mathrel{E_0}y$ if
$\exists n \ \forall m>n\ x(m)=y(m)$. The relation $E_0$ is hyperfinite. A countable
 Borel equivalence relation is hyperfinite if and
 only if it is Borel reducible to $E_0$. 
 
A Borel equivalence relation is \textbf{hypersmooth} if it can be written as an increasing union of smooth Borel equivalence relations. For instance, the relation $E_1$, defined on $(2^\N)^\N$ by $x\mathrel{E_1}y$ if $\exists n \ \forall m>n\ x(m)=y(m)$, is hypersmooth. A Borel equivalence relation is hypersmooth if and only if it is Borel reducible to $E_1$.

In the realm of countable Borel equivalence relations, the classes of hyperfinite and hypersmooth equivalence relations coincide: if a countable equivalence relation is hypersmooth, then it is hyperfinite.

An
\textbf{analytic equivalence relation} $E$ on $Z$ is an
equivalence relation such that $E$ is analytic as a subset
of $Z\times Z$. 

We will use the following consequence of the Second Reflection
theorem \cite[Theorem 35.16]{kechris}

\begin{lemma}\cite[Lemma 4.1]{HSS17}\label{reflection}
  Let $Z$ be a standard Borel space, $A\subseteq Z$ be
  analytic and let $E$ be an analytic equivalence relation
  on $Z$ such that there is some $K >1$ such that every
  $E|A$-class has size less than $K$.  Then there is a Borel
  equivalence relation $F$ on $Z$ with $E|A\subseteq F$ such
  that every $F$-class has size less than $K$.  
\end{lemma}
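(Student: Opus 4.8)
The plan is to regard $E|A$ as an analytic relation on $Z$ and to enlarge it to a Borel equivalence relation by means of the Second Reflection Theorem. The subtle point (to which I return at the end) is that one cannot reflect the property ``being an equivalence relation with classes of size less than $K$'' directly, because transitivity and symmetry of a relation are not expressible with the correct logical polarity; instead one reflects a cardinality bound on the equivalence relation \emph{generated} by an arbitrary relation, and only afterwards recovers a genuine Borel equivalence relation.

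First I would set $R:=E\cap(A\times A)\subseteq Z\times Z$. Since $E$ and $A$ are analytic, $R$ is analytic; it is symmetric and, as $E$ is transitive, also transitive, so the smallest equivalence relation on $Z$ containing $R$ is $\langle R\rangle=R\cup\{(x,x) : x\in Z\}$, whose classes are $[x]_{E|A}$ for $x\in A$ and $\{x\}$ for $x\notin A$. Since $K>1$, every $\langle R\rangle$-class thus has cardinality less than $K$.

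Next, for an \emph{arbitrary} relation $S\subseteq Z\times Z$, write $\langle S\rangle$ for the smallest equivalence relation on $Z$ containing $S$, and let $\Phi(S)$ be the assertion that every $\langle S\rangle$-class has cardinality less than $K$; equivalently, that there are no pairwise distinct $x_1,\dots,x_K\in Z$ which are pairwise joined by finite walks in the graph $S\cup S^{-1}$. The condition ``$a$ and $b$ are joined by a finite walk in $S\cup S^{-1}$'' is analytic in the parameters $(S,a,b)$ — it is a countable union, over the length of the walk, of projections of conditions involving only finitely many vertices, in the spirit of the computation carried out in the Example of Section~\ref{sec:borel-equiv-relat}. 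Hence the complement of $\Phi(S)$ is, uniformly in $S$, analytic in $S$, which is exactly the definability hypothesis needed to apply the Second Reflection Theorem \cite[Theorem~35.16]{kechris}. Applying it to the analytic relation $R$, which satisfies $\Phi(R)$ by the previous paragraph, I obtain a \emph{Borel} relation $S$ on $Z$ with $R\subseteq S$ and $\Phi(S)$.

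It then remains only to check that $F:=\langle S\rangle$ is Borel: it is then automatically a Borel equivalence relation on $Z$, its classes have cardinality less than $K$ by $\Phi(S)$, and $E|A=R\subseteq S\subseteq F$. Now the $\langle S\rangle$-classes are precisely the connected components of the graph $S\cup S^{-1}$, so by $\Phi(S)$ each component has fewer than $K$ vertices; in particular every vertical section of the Borel relation $S\cup S^{-1}$ has fewer than $K$ elements, so by the bounded-section form of the Lusin--Novikov theorem recalled in Section~\ref{sec:borel-equiv-relat} we may write $S\cup S^{-1}=\bigcup_{i=1}^{K-1}\mathrm{graph}(f_i)$ for Borel partial functions $f_i\co Z\to Z$. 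Since $S\cup S^{-1}$ is symmetric, traversing any of its edges amounts to applying one of the $f_i$, and since each component has fewer than $K$ vertices and hence diameter less than $K$, a pair $(a,b)$ lies in $\langle S\rangle$ if and only if $a=b$ or $b=(f_{i_{m-1}}\circ\dots\circ f_{i_0})(a)$ for some $m<K$ and some $i_0,\dots,i_{m-1}\in\{1,\dots,K-1\}$. Thus $\langle S\rangle$ is the union of the diagonal of $Z$ with finitely many graphs of Borel partial functions, in particular Borel, which completes the proof. I expect the main difficulties to be the two already flagged: choosing a property that \emph{can} be reflected (hence the detour through $\langle S\rangle$ rather than through equivalence relations directly), and then re-establishing that $\langle S\rangle$ is Borel — not merely analytic — from the reflected Borel relation $S$, which is exactly where the bounded-section Lusin--Novikov theorem is needed.
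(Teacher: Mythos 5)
Your argument is correct, and it is essentially the expected one: the paper itself does not reprove Lemma~\ref{reflection} but quotes it from \cite[Lemma 4.1]{HSS17}, whose proof likewise combines a reflection argument with the Lusin--Novikov theorem, so you have in effect reconstructed the cited proof rather than diverged from it. Two small remarks. First, the property you reflect, $\Phi(S)\equiv$ ``every class of the equivalence relation generated by $S$ has size $<K$'', refers to $S$ only positively (its negation is the existence of $K$ distinct points pairwise joined by finite $S\cup S^{-1}$-walks, which is uniformly analytic), so it is $\Pi^1_1$ on $\Sigma^1_1$ and the \emph{First} Reflection Theorem \cite[Theorem~35.10]{kechris} already suffices; invoking the Second Reflection Theorem with a dummy second variable is also legitimate (hereditariness and upward continuity are trivial here), but the detour through pairs is not actually needed for your choice of $\Phi$. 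Second, in the final step you implicitly use that compositions of Borel partial functions again have Borel graphs (e.g.\ by Lusin--Novikov or Lusin--Souslin applied to the graph of the composition); this is standard and unproblematic, but worth a word since it is exactly what turns the bound ``each component has fewer than $K$ vertices, hence any two of its points are joined by a path of length $<K$'' into the Borelness of $\langle S\rangle$. With these points acknowledged, the chain $E|A=E\cap(A\times A)\subseteq S\subseteq\langle S\rangle=:F$, with $F$ Borel, an equivalence relation, and all classes of size $<K$, is exactly what the lemma asserts.
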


For more details regarding notation and standard facts in
descriptive set theory we refer the reader to \cite{kechris} or \cite{Kan08}.


\section{Preliminary lemmas and basic definitions}
\begin{lemma}\label{lemma:preparation}
Let $x\in X^{(0)}$, and let $\Gamma=(x_n)_{n\in\NN}$ be a CGR. Then there exists some $k\in\NN$ such that $\Gamma_{xx_k}\cdot (x_n)_{n\geq k}$ is a CGR for any geodesic path $\Gamma_{xx_k}$ from $x$ to $x_k$.
\end{lemma}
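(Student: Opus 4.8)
The plan is to use Gromov hyperbolicity (Lemma~\ref{lemma:hyperbolic_basic_prop}) to show that after a bounded amount of back-and-forth near $x$, the geodesic ray $\Gamma$ becomes ``rigid enough'' that it can be re-rooted at $x$ without destroying geodesicity to $\eta$. Let $\eta\in\partial X$ be the boundary point determined by $\Gamma$. First I would recall that since $X$ is locally finite and $\Gamma$ converges to $\eta$, there exists a CGR $\Gamma'=(x_n')_{n\in\NN}\in\CGR(x,\eta)$ starting at $x$ and pointing towards $\eta$ (e.g. take geodesic segments from $x$ to $x_n$ and pass to a subsequential limit using König's lemma / local finiteness). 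By Lemma~\ref{lemma:hyperbolic_basic_prop}, both $\Gamma$ and $\Gamma'$ lie in $\CGR(z_0',\eta)$ after suitable re-indexing; more directly, since $\Gamma$ and $\Gamma'$ are both CGRs asymptotic to $\eta$, the function $n\mapsto\dc(x_n,x)$ and $n\mapsto\dc(x_n',x)=n$ differ by a bounded amount, and in fact $|\dc(x_n,x)-n|\le 2\dc(x_0,x)+2\delta$ for all $n$ (this is the standard comparison of two rays with the same endpoint at infinity, relative to the third vertex $x$).

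The key step: I claim that for $k$ large enough, $\dc(x,x_k)=\dc(x,x_0)+k$ does \emph{not} hold in general, but rather we want $k$ such that $x_k$ lies ``past'' the point where $\Gamma$ stops moving back towards $x$. Concretely, consider the quantity $\dc(x,x_n)-n$; it is bounded below (by $-\dc(x,x_0)$, from the triangle inequality along $\Gamma$) and bounded above (by $\dc(x,x_0)$ likewise), and for a ray converging to $\eta\ne$ (the direction back towards $x$) it is eventually nondecreasing up to bounded error — in fact, using $\delta$-slimness of the triangle with vertices $x$, $x_0$ and a far-out vertex $x_N$, one shows that once $n$ is large the nearest-point projection of $x$ onto $\Gamma$ has stabilized. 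Choose $k$ realizing the minimum of $n\mapsto\dc(x,x_n)-n$ (this minimum is attained since the function is integer-valued, bounded below, and — crucially — eventually stays above its minimum: for $n$ large, $\dc(x,x_n)-n$ exceeds any fixed value because $\dc(x,x_n)\ge\dc(x_0,x_n)-\dc(x_0,x)=n-\dc(x_0,x)$ only gives a lower bound, so I instead pick $k$ to be the \emph{largest} index at which the minimum is attained, which exists once I check the function does not return to its minimum infinitely often; this uses that $x_n$ leaves every ball, so $\dc(x,x_n)\to\infty$ while... ) — let me instead just take $k$ to be any index with $\dc(x,x_k)+\dc(x_k,x_n) = \dc(x,x_n)$ up to the relevant slack, chosen via the projection of $x$ onto $\Gamma$.

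The clean argument: let $p=\proj_\Gamma(x)$ be (an index of) a nearest point on $\Gamma$ to $x$, and set $k:=p$. Then for any $n\ge k$, by $\delta$-slimness of the geodesic triangle with sides $\Gamma|_{[0,n]}$, a geodesic $[x,x_0]$, and a geodesic $[x,x_n]$, one shows $\dc(x,x_n)\ge\dc(x,x_k)+\dc(x_k,x_n)-2\delta$; increasing $k$ by a further additive constant $2\delta$ absorbs this, giving that $\Gamma_{xx_k}\cdot(x_n)_{n\ge k}$ is a genuine geodesic path for every choice of geodesic $\Gamma_{xx_k}$. That it is a \emph{ray} pointing towards $\eta$ is immediate since it shares the tail $(x_n)_{n\ge k}$ with $\Gamma$, hence is asymptotic to $\Gamma$, hence converges to $\eta$.

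The main obstacle I anticipate is the bookkeeping in the middle step: precisely pinning down the additive constant (in terms of $\delta$ and $\dc(x,x_0)$) so that concatenating \emph{any} geodesic $\Gamma_{xx_k}$ from $x$ to $x_k$ with the tail yields a path on which distances add exactly, rather than up to $O(\delta)$ error. The resolution is to choose $k$ not as the nearest-point projection index itself but as that index plus a fixed slack $C(\delta)$ — past the projection point the function $n\mapsto\dc(x,x_n)$ is strictly increasing with slope $1$ up to the bounded fellow-travelling constant, and hyperbolicity forces exact additivity once we are $2\delta$ beyond the projection. Everything else (existence of a CGR from $x$ to $\eta$, the fellow-travelling bounds) is routine from local finiteness and Lemma~\ref{lemma:hyperbolic_basic_prop}.
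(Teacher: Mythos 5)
There is a genuine gap, on two levels. First, the lemma is stated (and later used, e.g.\ in Lemma~\ref{lemma:existence} and throughout the section on special vertices) for an arbitrary connected locally finite graph $X$: hyperbolicity is only assumed from the section ``Consequences of hyperbolicity'' onwards. Your argument is built entirely on $\delta$-slimness and nearest-point projections, so even if it worked it would only prove a weaker statement than the one needed. Second, even granting hyperbolicity, the decisive step is missing: you obtain $\dc(x,x_n)\geq \dc(x,x_k)+\dc(x_k,x_n)-2\delta$ and then assert that moving $k$ forward by $2\delta$ ``absorbs'' the error, i.e.\ that hyperbolicity forces \emph{exact} additivity a bounded distance past the projection point. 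That assertion is exactly the content of the lemma in your setup, and it is nowhere justified; a coarse inequality with an additive $O(\delta)$ defect does not upgrade to an exact equality by shifting the concatenation point a bounded amount without a further argument. You flag this yourself as ``the main obstacle,'' but the proposed resolution is a restatement of the claim, not a proof. The earlier paragraph of the proposal (choosing $k$ as a largest index realizing a minimum, etc.) is left visibly unresolved.

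What makes the gap frustrating is that you had the right quantity in hand: the paper's proof considers $g(n):=\dc(x,x_n)-n$ and observes two one-line facts you did not combine. Since $x_n$ and $x_{n+1}$ are adjacent, $\dc(x,x_{n+1})\leq \dc(x,x_n)+1$, so $g$ is \emph{non-increasing}; and $\dc(x,x_n)\geq \dc(x_0,x_n)-\dc(x_0,x)=n-\dc(x,x_0)$, so $g$ is bounded below. Being integer-valued, $g$ is eventually constant, say for $n\geq k$, which says precisely that $\dc(x,x_n)=\dc(x,x_k)+(n-k)$ for all $n\geq k$; hence any geodesic from $x$ to $x_k$ followed by $(x_n)_{n\geq k}$ is a geodesic ray, and it is asymptotic to $\Gamma$ (they share a tail), so it is a CGR to the same boundary point. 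You noted the lower bound and even the quantity $\dc(x,x_n)-n$, but missed its monotonicity, which is what yields exactness for free --- no $\delta$, no projections, no local finiteness needed.
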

\begin{proof}
Note that the function $g\co\NN\to\mathbb{Z}: n\mapsto \dc(x,x_n)-n$ is non-increasing, as 
$\dc(x,x_n)\geq \dc(x,x_{n+1})-1$, and bounded from below, as $g(n)=\dc(x,x_n)-\dc(x_0,x_n)\geq -\dc(x,x_0)$. Hence $g$ is eventually constant, that is, there is some $k\in\NN$ such that $\dc(x,x_n)=\dc(x,x_k)+n-k$ for all $n\geq k$, as desired.
\end{proof}

\begin{lemma}\label{lemma:existence}
If $\Gamma=(x_n)_{n\in\NN}\subseteq X^{(0)}$ is a CGR, then $(x_n)_{n\in\NN}$ converges in $\Chor(X)$. We denote its limit by $\xi_{\Gamma}\in\Chb(X)$ and we say that $\Gamma$ {\bf converges} to $\xi_{\Gamma}$.
\end{lemma}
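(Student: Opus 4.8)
The plan is to show that the sequence $(f_{x_n,z_0})_{n\in\NN}$ in $\FFF(X,z_0)$ converges pointwise, i.e.\ that for each fixed $y\in X^{(0)}$ the sequence of real numbers $f_{x_n,z_0}(y)=\dc(x_n,y)-\dc(x_n,z_0)$ converges. Since $\Chor(X)$ is the closure of $X^{(0)}$ in $\FFF(X,z_0)$, which is closed under pointwise limits of $1$-Lipschitz functions vanishing at $z_0$, any pointwise limit of the $f_{x_n,z_0}$ automatically lies in $\Chor(X)$; and it lies in $\Chb(X)=\Chor(X)\setminus X^{(0)}$ because $\dc(x_n,z_0)=n+O(1)\to\infty$ (using Lemma~\ref{lemma:preparation}, or directly the fact that a geodesic ray leaves every bounded set), so the limit function is unbounded below and hence not of the form $f_{x,z_0}$ for any vertex $x$.

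So the crux is convergence of $n\mapsto \dc(x_n,y)-\dc(x_n,z_0)$ for each fixed $y$. First I would record that this sequence is integer-valued and bounded (it lies in $[-\dc(z_0,y),\dc(z_0,y)]$ by the reverse triangle inequality), so it suffices to show it is eventually monotone, or more simply that it is eventually constant. The key observation is that along a geodesic ray, distances behave predictably: by Lemma~\ref{lemma:preparation} applied with base point $y$ (not $z_0$), there is some $k=k(y)$ such that $\dc(y,x_n)=\dc(y,x_k)+n-k$ for all $n\geq k$; and likewise, since $\Gamma$ is itself a geodesic, $\dc(z_0,x_n)=\dc(x_0,x_n)+\big(\dc(z_0,x_n)-\dc(x_0,x_n)\big)$ — better, apply Lemma~\ref{lemma:preparation} once more with base point $z_0$ to get $k'$ with $\dc(z_0,x_n)=\dc(z_0,x_{k'})+n-k'$ for all $n\geq k'$. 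Then for $n\geq\max(k,k')$ we get
\[
\dc(x_n,y)-\dc(x_n,z_0)=\big(\dc(y,x_k)+n-k\big)-\big(\dc(z_0,x_{k'})+n-k'\big)=\dc(y,x_k)-k-\dc(z_0,x_{k'})+k',
\]
which is independent of $n$. Hence the sequence is eventually constant, in particular convergent.

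Therefore $f_{x_n,z_0}$ converges pointwise to a well-defined function $\xi_\Gamma\co X^{(0)}\to\RR$, which, as a pointwise limit of elements of $\FFF(X,z_0)$, again satisfies the $1$-Lipschitz condition and vanishes at $z_0$, so $\xi_\Gamma\in\FFF(X,z_0)$; being a limit of points of $X^{(0)}$, it lies in $\Chor(X)$. Finally, $\xi_\Gamma\notin X^{(0)}$: if we had $\xi_\Gamma=f_{x,z_0}$ for some vertex $x$, then evaluating at $y=x$ would give $\xi_\Gamma(x)=-\dc(x,z_0)$, but from the computation above $\xi_\Gamma(x)=\lim_n\big(\dc(x_n,x)-\dc(x_n,z_0)\big)$, and since $\dc(x_n,x)\geq \dc(x_n,z_0)-\dc(z_0,x)$ and also $\dc(x_n,x)\to\infty$ (the ray is unbounded) while $\dc(x_n,z_0)-\dc(x_n,x)$ is bounded, one checks $\xi_\Gamma(x)>-\dc(x,z_0)$ for $n$ large, a contradiction; alternatively, note $f_x$ attains its minimum value $-\dc(x,z_0)$ at $x$, whereas $\xi_\Gamma$ attains no minimum since $\xi_\Gamma(x_n)=\dc(x_n,x_n)-\dc(x_n,z_0)=-\dc(x_n,z_0)\to-\infty$. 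Either way $\xi_\Gamma\in\Chb(X)$, as claimed. I do not expect any real obstacle here; the only mild subtlety is being careful that the various constants from Lemma~\ref{lemma:preparation} depend on $y$ but not on $n$, which is exactly what makes the limit well-defined for each $y$ separately.
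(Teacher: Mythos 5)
Your main argument is the same as the paper's: both proofs establish pointwise eventual constancy of $f_{x_n}(y)$ by applying Lemma~\ref{lemma:preparation} with base points $y$ and $z_0$ (the paper phrases this as choosing a single $N$ for which both concatenations are CGR, which is your $\max(k,k')$), so the core of the lemma is proved correctly and in the same way.

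One caveat concerns the part you add about $\xi_\Gamma\in\Chb(X)$, which the paper leaves implicit. Your first argument there is not correct: the equality $\xi_\Gamma(x)=-\dc(x,z_0)$ can genuinely occur, e.g.\ for $z_0=x_0$ and $x=x_5$ one has $\dc(x_n,x)-\dc(x_n,z_0)=-5=-\dc(x,z_0)$ for all $n\geq 5$, so agreement of $\xi_\Gamma$ with $f_x$ at the single point $x$ gives no contradiction and the claimed strict inequality $\xi_\Gamma(x)>-\dc(x,z_0)$ fails in general. Your ``alternative'' argument likewise asserts $\xi_\Gamma(x_n)=f_{x_n}(x_n)=-\dc(x_n,z_0)$, which confuses the limit over $m$ of $f_{x_m}(x_n)$ with the diagonal term; in general one only has $\xi_\Gamma(x_n)\geq-\dc(x_n,z_0)$, possibly strictly. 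The intended conclusion is nevertheless correct and repaired in one line: for $m\geq n$ one has $f_{x_m}(x_n)=(m-n)-\dc(x_m,z_0)\leq (m-n)-\bigl(m-\dc(x_0,z_0)\bigr)=\dc(x_0,z_0)-n$, hence $\xi_\Gamma(x_n)\leq\dc(x_0,z_0)-n\to-\infty$, so $\xi_\Gamma$ is unbounded below and cannot equal any $f_x$, which is bounded below by $-\dc(x,z_0)$.
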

\begin{proof}
Let $y\in X^{(0)}$, and let us show that $(f_{x_n}(y))_{n\in\NN}$ is eventually constant, as desired. By Lemma~\ref{lemma:preparation}, there is some $N\in\NN$ such that $\Gamma_{yx_N}\cdot (x_n)_{n\geq N}$ and $\Gamma_{z_0x_N}\cdot (x_n)_{n\geq N}$ are CGR for some geodesic path $\Gamma_{yx_N}$ from $y$ to $x_N$ and some geodesic path $\Gamma_{z_0x_N}$ from $z_0$ to $x_N$. In particular, if $n\geq N$, then 
\begin{align*}
f_{x_n}(y)&=\dc(x_n,y)-\dc(x_n,z_0)\\
&=\dc(y,x_N)+\dc(x_N,x_n)-\dc(z_0,x_N)-\dc(x_N,x_n)=f_{x_N}(y),
\end{align*}
yielding the claim. 
\end{proof}

\begin{definition}
Given $\eta\in\partial X$, we set
$$\Xi(\eta):=\{\xi\in\Chb(X) \ | \ \textrm{$\xi=\xi_{\Gamma}$ for some $\Gamma\in\CGR(z_0,\eta)$}\}.$$
Note that $\Xi(\eta)$ is independent of the choice of $z_0$ by Lemma~\ref{lemma:preparation}: if $z_0'\in X^{(0)}$ and $\xi\in\Chb(X)$, then $\xi=\xi_{\Gamma}$ for some $\Gamma\in\CGR(z_0,\eta)$ if and only if $\xi=\xi_{\Gamma'}$ for some $\Gamma'\in\CGR(z_0',\eta)$.
\end{definition}

\begin{definition}
For $x\in X^{(0)}$ and $\xi\in\Xi(\eta)$, define the {\bf combinatorial sector} $$Q(x,\xi):=\{y\in X^{(0)} \ | \ \textrm{$y$ is on a CGR from $x$ to $\eta$ that converges to $\xi$}\}.$$ Note that $Q(x,\xi)$ is nonempty by Lemma~\ref{lemma:preparation}.
\end{definition}

\begin{example}\label{example:A2tilde}
Consider the Coxeter complex of type $\widetilde{A}_2$ (see e.g. \cite[\S 3]{BrownAbr}), that is, the tiling of the Euclidean plane by congruent equilateral triangles (see Figure~\ref{figure:A2tilde}). Let $X$ be the associated chamber graph, namely, the graph with vertex set the barycenters of these triangles, and with an edge between two barycenters if the corresponding triangles share a common edge. 

In this situation, the horofunction compactification of $X$ coincides with the combinatorial compactification of $X$ introduced in \cite[\S 2]{CL11}, and $\Chb(X)$ can be thought of as a refinement of $\partial X$ (see \cite[\S 3]{CL11}, and also \cite[Example~3.1]{geohyperbolic}).

An example of a CGR $\Gamma\in\CGR(x,\eta)$ for some vertex $x\in X^{(0)}$ and some direction $\eta\in\partial X$ is depicted on Figure~\ref{figure:A2tilde}. The set $\Xi(\eta)$ can be viewed as the set of ``strips'' delimited by two adjacent lines in the direction of $\eta$, or else as the set of vertices of a simplicial line ``transversal'' to the direction $\eta$ (the dashed line on Figure~\ref{figure:A2tilde}) --- see also \cite[Appendix~A]{geohyperbolic}.

An example of combinatorial sector $Q(x,\xi)$ for some $\xi\in\Xi(\eta)$ is depicted as the coloured area in Figure~\ref{figure:A2tilde} (note that in this example, the notion of combinatorial sector coincides with that introduced in \cite[\S 2.3]{CL11} --- see \cite[Theorem~3.11]{geohyperbolic}).
\end{example}

\begin{figure}
\centering
\begin{minipage}[t]{.7\textwidth}
   \centering
  \includegraphics[trim = 0mm 10mm 17mm 15.4mm, clip, width=11cm]{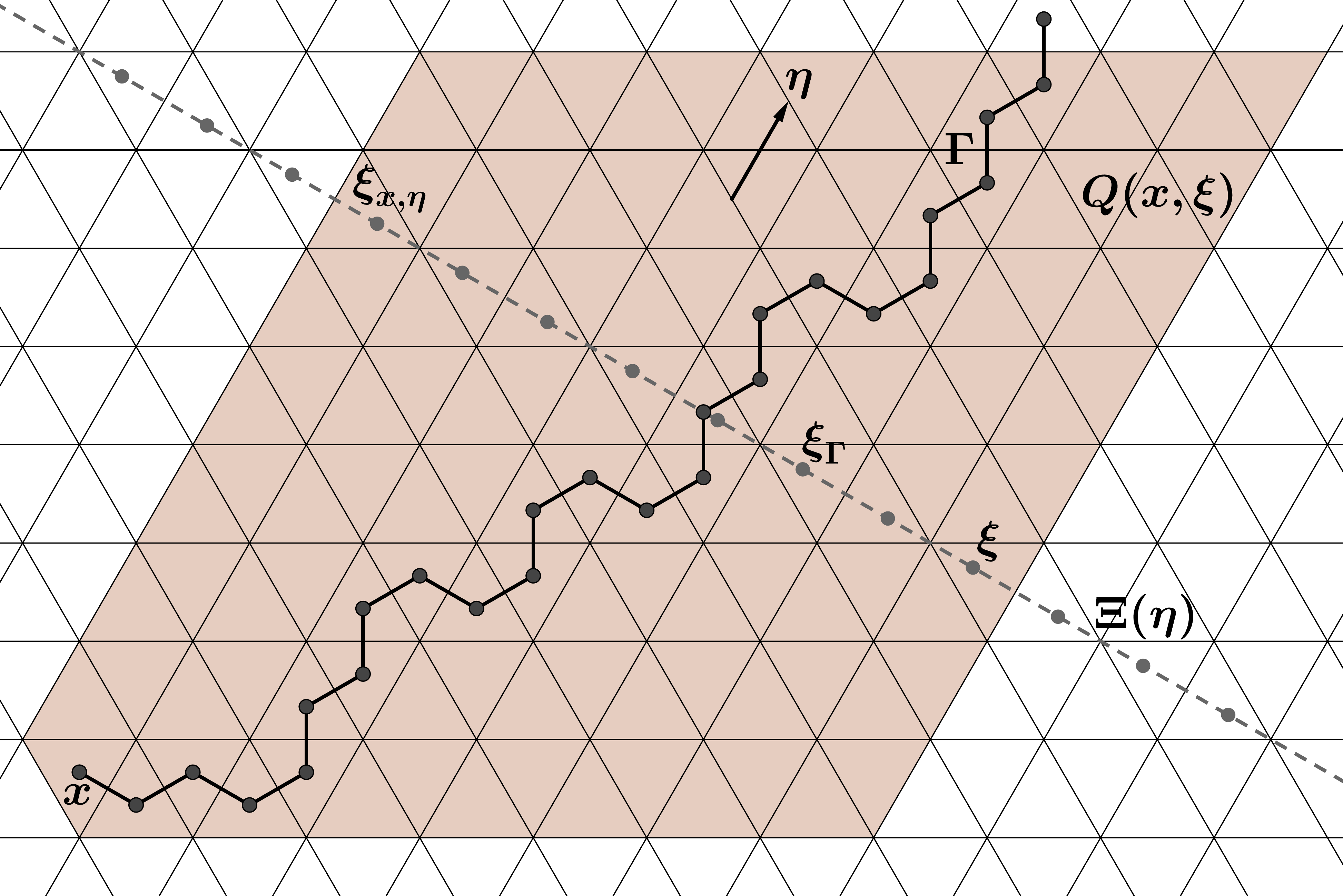}
  \captionof{figure}{Coxeter complex of type $\widetilde{A}_2$}
  \label{figure:A2tilde}
\end{minipage}%
\begin{minipage}[t]{.4\textwidth}
  \centering
  \includegraphics[trim = 54mm 95mm 103mm 45mm, clip, width=3cm]{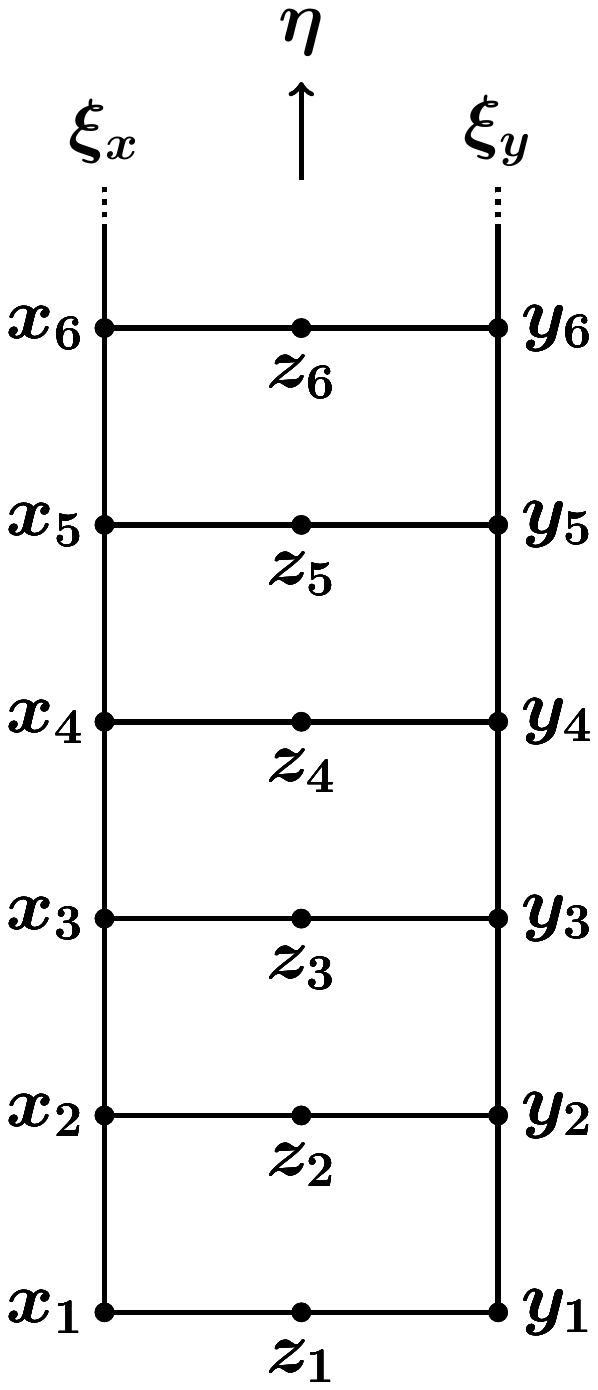}
  \captionof{figure}{Bad ladder}
  \label{figure:bad_ladder}
\end{minipage}
\end{figure}

\begin{example}\label{example:bad_ladder}
Consider the graph $X$ pictured in Figure~\ref{figure:bad_ladder}, with vertex set $X^{(0)}=\{x_n,y_n,z_n \ | \ n\geq 1\}$. Then $\partial X$ has a unique element $\eta\in\partial X$, whereas $\Chb(X)=\Xi(\eta)$ has two elements $\xi_x,\xi_y$, respectively corresponding to the (limits of the) CGR $(x_n)_{n\geq 1}$ and $(y_n)_{n\geq 1}$. Here are some examples of combinatorial sectors:
$$Q(x_1,\xi_x)=\{x_n \ | \ n\geq 1\}, \quad Q(x_1,\xi_y)=X^{(0)}, \quad\textrm{and}\quad Q(z_1,\xi_y)=\{z_1,y_n \ | \ n\geq 1\}.$$
Note also that $\Geo(x_1,\eta)=X^{(0)}$ and $\Geo(z_1,\eta)=\{z_1,x_n,y_n \ | \ n\geq 1\}$ have infinite symmetric difference, and hence $X$ does not satisfy the hyperfiniteness criterion given in \cite[Theorem~1.4]{HSS17}. This ``bad ladder'' is at the basis of the counter-example to \cite[Question~1.5]{HSS17} described in \cite{touikan}.
\end{example}


\section{Special vertices}
Throughout this section, we fix some $\eta\in\partial X$.

\begin{lemma}\label{lemma:convergence_same_xi}
Let $x\in X^{(0)}$ and $\xi\in\Xi(\eta)$. Let $(x_n)_{n\in\NN}\in\CGR(x,\eta)$ be converging to $\xi$ and let $y\in Q(x,\xi)$. Then there is some $N\in\NN$ such that $\dc(x,y)+\dc(y,x_n)=\dc(x,x_n)$ for all $n\geq N$.
\end{lemma}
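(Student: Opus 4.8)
The plan is to compare the horofunctions along the two CGR by means of a finite-difference trick. By definition of $y\in Q(x,\xi)$, fix a CGR $\Gamma'=(y_m)_{m\in\NN}\in\CGR(x,\eta)$ with $\xi_{\Gamma'}=\xi$ passing through $y$; since $\Gamma'$ is a geodesic ray with $y_0=x$, we have $y=y_j$ where $j:=\dc(x,y)$. Recalling that $\xi\in\Chb(X)\subseteq\FFF(X,z_0)$ is itself a function $X^{(0)}\to\RR$, Lemma~\ref{lemma:existence} and the hypotheses give $\xi(w)=\lim_{n\to\infty}f_{x_n}(w)=\lim_{m\to\infty}f_{y_m}(w)$ for every $w\in X^{(0)}$ (pointwise limits).

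The key observation is that the dependence on the base point $z_0$ cancels in differences: for every $w\in X^{(0)}$ and every $n$, we have $f_{x_n}(w)-f_{x_n}(x)=\dc(x_n,w)-\dc(x_n,x)=\dc(x_n,w)-n$, using that $x=x_0$ lies on the geodesic ray $(x_n)_{n\in\NN}$. Taking $w=y$ and letting $n\to\infty$ yields $\lim_{n\to\infty}\bigl(\dc(x_n,y)-n\bigr)=\xi(y)-\xi(x)$. Computing the analogous difference along $\Gamma'$ instead, for $m\geq j$ we have $f_{y_m}(y)-f_{y_m}(x)=\dc(y_m,y_j)-\dc(y_m,y_0)=(m-j)-m=-j$, so letting $m\to\infty$ gives $\xi(y)-\xi(x)=-j$.

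Combining the two computations, $\dc(x_n,y)-n\to -j$ as $n\to\infty$. Since $\dc(x_n,y)-n$ is an integer for every $n$ and $-j\in\mathbb{Z}$, the sequence is eventually equal to $-j$; that is, there is some $N\in\NN$ with $\dc(x_n,y)=n-j$ for all $n\geq N$. For such $n$ we conclude $\dc(x,y)+\dc(y,x_n)=j+(n-j)=n=\dc(x,x_n)$, which is the desired statement.

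I do not expect a genuine obstacle here. The only slightly non-obvious move is to work with the canonical, base-point-independent difference $f_{x_n}(y)-f_{x_n}(x)$ rather than with $f_{x_n}(y)$ on its own, and then to upgrade the convergence of an integer-valued sequence to eventual stabilisation; the remainder is just the triangle inequality together with the fact that the index of a vertex on a geodesic ray equals its distance from the ray's origin.
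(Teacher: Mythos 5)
Your proof is correct and follows essentially the same route as the paper: both compare the base-point-independent difference of horofunctions at $x$ and $y$ along the ray $(x_n)$ and along a CGR through $y$ converging to $\xi$, the latter giving the exact value $-\dc(x,y)$. The only cosmetic difference is that you pass to the limit and then invoke integrality to get eventual equality, whereas the paper uses directly that the horofunctions of a CGR are eventually constant at each vertex (Lemma~\ref{lemma:existence}).
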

\begin{proof}
Let $(y_n)_{n\in\NN}\in\CGR(x,\eta)$ be converging to $\xi$ and passing through $y$ (say $y=y_m$ for some $m\in\NN$). Since $(x_n)_{n\in\NN}$ and $(y_n)_{n\in\NN}$ both converge to $\xi$, there exists some $N\in\NN$ with $N\geq m$ such that 
$$f_{x_n}(y)=\xi(y)=f_{y_n}(y)\quad\textrm{and}\quad f_{x_n}(x)=\xi(x)=f_{y_n}(x)$$
for all $n\geq N$. Hence, for each $n\geq N$, we have
\begin{align*}
\dc(x,y)&=\dc(y_n,x)-\dc(y_n,y)=f_{y_n}(x)-f_{y_n}(y)=f_{x_n}(x)-f_{x_n}(y)\\
&=\dc(x_n,x)-\dc(x_n,y),
\end{align*}
yielding the claim.
\end{proof}

\begin{lemma}\label{lemma:alt_desc_Qxxi}
Let $x\in X^{(0)}$ and $\xi\in\Xi(\eta)$. Let $\Gamma=(x_n)_{n\in\NN}\in\CGR(x,\eta)$ be converging to $\xi$. Then
$$Q(x,\xi)=\bigcup_{n\in\NN}\Gamma(x,x_n).$$
\end{lemma}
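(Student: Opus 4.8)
The plan is to prove the two inclusions separately, each being a fairly short consequence of the preliminary lemmas.

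\emph{The inclusion $Q(x,\xi)\subseteq\bigcup_{n\in\NN}\Gamma(x,x_n)$.} Given $y\in Q(x,\xi)$, I would simply apply Lemma~\ref{lemma:convergence_same_xi} to the CGR $\Gamma=(x_n)_{n\in\NN}$ (which converges to $\xi$) and the vertex $y$: it yields some $N\in\NN$ with $\dc(x,y)+\dc(y,x_N)=\dc(x,x_N)$, which says precisely that $y\in\Gamma(x,x_N)$. So this direction is essentially a reformulation of that lemma.

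\emph{The reverse inclusion.} Fix $n\in\NN$ and $y\in\Gamma(x,x_n)$; I must produce a CGR from $x$ to $\eta$ passing through $y$ that converges to $\xi$. The natural candidate is the concatenation $\Gamma':=\Gamma_{xy}\cdot\Gamma_{yx_n}\cdot(x_m)_{m\geq n}$, where $\Gamma_{xy}$ is any geodesic path from $x$ to $y$ and $\Gamma_{yx_n}$ any geodesic path from $y$ to $x_n$. Writing $\Gamma'=(z_m)_{m\in\NN}$ and using that $\Gamma$ is a geodesic ray starting at $x=x_0$ (so $\dc(x,x_m)=m$) together with the defining identity $\dc(x,y)+\dc(y,x_n)=\dc(x,x_n)=n$, one checks that $\Gamma_{xy}\cdot\Gamma_{yx_n}$ is a geodesic path from $x$ to $x_n$ (it has length $n=\dc(x,x_n)$), whence $z_m=x_m$ for all $m\geq n$ and $\dc(x,z_m)=m$ for all $m\in\NN$; then for $i\leq j$ the triangle inequality gives $\dc(z_i,z_j)\leq j-i$ while $\dc(z_i,z_j)\geq\dc(x,z_j)-\dc(x,z_i)=j-i$, so $\Gamma'$ is a geodesic ray starting at $x$. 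Since $\Gamma'$ and $\Gamma$ share a common tail, $\Gamma'$ is asymptotic to $\Gamma$, hence $\Gamma'\in\CGR(x,\eta)$; and for every $w\in X^{(0)}$ we have $f_{z_m}(w)=f_{x_m}(w)$ for all $m\geq n$, so $\Gamma'$ converges to the same horofunction $\xi$ as $\Gamma$ (as in the proof of Lemma~\ref{lemma:existence}). Thus $y$ lies on $\Gamma'\in\CGR(x,\eta)$ converging to $\xi$, i.e. $y\in Q(x,\xi)$.

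I do not expect a genuine obstacle here: the only point requiring a little care is verifying that $\Gamma'$ is a geodesic \emph{ray} and not merely a path assembled from geodesic pieces, which reduces to the distance bookkeeping indicated above; the degenerate case $y=x$ is trivially fine.
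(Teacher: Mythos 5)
Your proof is correct and takes essentially the same route as the paper: the inclusion $Q(x,\xi)\subseteq\bigcup_{n}\Gamma(x,x_n)$ is obtained exactly as in the paper by applying Lemma~\ref{lemma:convergence_same_xi}, while the reverse inclusion, which the paper dismisses as clear, is the concatenation argument you spell out (and your distance bookkeeping for it is sound, including the degenerate cases).
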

\begin{proof}
The inclusion $\supseteq$ is clear. Conversely, let $y\in Q(x,\xi)$. Then Lemma~\ref{lemma:convergence_same_xi} yields some $N\in\NN$ such that $\dc(x,y)+\dc(y,x_N)=\dc(x,x_N)$, so that $y\in\Gamma(x,x_N)$.
\end{proof}

\begin{lemma}\label{lemma:Qx_in_Qy}
Let $x\in X^{(0)}$ and $\xi\in\Xi(\eta)$. If $y\in Q(x,\xi)$, then $Q(y,\xi)\subseteq Q(x,\xi)$.
\end{lemma}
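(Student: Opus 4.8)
The plan is to work entirely with the combinatorial description of sectors from Lemma~\ref{lemma:alt_desc_Qxxi}, using Lemma~\ref{lemma:preparation} to transport a CGR from basepoint $x$ to basepoint $y$ without changing its limit, and then to conclude by two applications of the triangle inequality. First I would set things up at $x$: since $Q(x,\xi)\neq\emptyset$, fix a CGR $\Delta=(x_n)_{n\in\NN}\in\CGR(x,\eta)$ converging to $\xi$, so that by Lemma~\ref{lemma:alt_desc_Qxxi} we have $Q(x,\xi)=\bigcup_{n\in\NN}\Gamma(x,x_n)$; and since $y\in Q(x,\xi)$, Lemma~\ref{lemma:convergence_same_xi} yields some $N\in\NN$ with $\dc(x,y)+\dc(y,x_n)=\dc(x,x_n)$ for all $n\geq N$.

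Next I would produce a good CGR based at $y$. Applying Lemma~\ref{lemma:preparation} to $\Delta$ and the vertex $y$ gives some $k\geq N$ such that, for any geodesic path $\Gamma_{yx_k}$ from $y$ to $x_k$, the concatenation $\Gamma':=\Gamma_{yx_k}\cdot(x_n)_{n\geq k}$ is a CGR. This ray shares the tail $(x_n)_{n\geq k}$ with $\Delta$, so it is at finite Hausdorff distance from $\Delta$, hence lies in $\CGR(y,\eta)$; and since the limit of a CGR in $\Chor(X)$ depends only on a tail (as in the proof of Lemma~\ref{lemma:existence}), $\Gamma'$ converges to $\xi$ as well. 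I expect this to be the only genuinely delicate point: one must check carefully that replacing the initial segment $(x_n)_{0\le n\le k}$ of $\Delta$ by the detour $\Gamma_{yx_k}$ alters neither the asymptotic class (so $\Gamma'$ still points to $\eta$) nor the horofunction limit (so $\Gamma'$ still converges to $\xi$); everything else is an application of the triangle inequality.

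Finally I would pull a point $z\in Q(y,\xi)$ back into $Q(x,\xi)$. Writing $\Gamma'=(w_m)_{m\in\NN}$ and letting $\ell:=\dc(y,x_k)$ be the length of $\Gamma_{yx_k}$, we have $w_m=x_{k+m-\ell}$ for all $m\geq\ell$. Applying Lemma~\ref{lemma:convergence_same_xi} to $y$, $\xi$, the CGR $\Gamma'$ and $z\in Q(y,\xi)$ gives some $M$ with $\dc(y,z)+\dc(z,w_m)=\dc(y,w_m)$ for $m\geq M$; choosing $m\geq\max(M,\ell)$ and setting $n_1:=k+m-\ell\geq k\geq N$, this says $\dc(y,z)+\dc(z,x_{n_1})=\dc(y,x_{n_1})$. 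Combining this with the relation from the first step (valid since $n_1\geq N$) and the triangle inequality sandwiches $\dc(x,x_{n_1})$ between itself: $\dc(x,x_{n_1})=\dc(x,y)+\dc(y,x_{n_1})=\dc(x,y)+\dc(y,z)+\dc(z,x_{n_1})\geq\dc(x,z)+\dc(z,x_{n_1})\geq\dc(x,x_{n_1})$. Hence all these inequalities are equalities; in particular $z\in\Gamma(x,x_{n_1})\subseteq Q(x,\xi)$, and since $z\in Q(y,\xi)$ was arbitrary, $Q(y,\xi)\subseteq Q(x,\xi)$. (Note this argument never uses hyperbolicity of $X$, consistent with the other lemmas of this section.)
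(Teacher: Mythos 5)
Your argument is correct, and it is in essence the paper's proof: both reduce the inclusion to the description of sectors as unions of geodesic intervals (Lemma~\ref{lemma:alt_desc_Qxxi}) together with a betweenness/triangle-inequality computation, after producing a CGR from $y$ to $\eta$ converging to $\xi$ whose concatenation with a geodesic from $x$ to $y$ is still geodesic. The paper merely shortcuts your construction of $\Gamma'$: since $y\in Q(x,\xi)$ means by definition that some CGR from $x$ converging to $\xi$ passes through $y$, its tail from $y$ already serves as your $\Gamma'$ (with the initial segment as $\Gamma_{xy}$), so the detour through Lemma~\ref{lemma:preparation} and the two applications of Lemma~\ref{lemma:convergence_same_xi} can be avoided.
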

\begin{proof}
Let $\Gamma_{xy}$ be a geodesic path from $x$ to $y$ and let $(y_n)_{n\in\NN}\in\CGR(y,\eta)$ be converging to $\xi$ and such that $\Gamma_{xy}\cdot (y_n)_{n\in\NN}$ is a CGR. Since $\Gamma(y,y_n)\subseteq\Gamma(x,y_n)$ for each $n\in\NN$, the lemma follows from Lemma~\ref{lemma:alt_desc_Qxxi}.
\end{proof}

\begin{lemma}\label{lemma:reorientate_CGR}
Let $x\in X^{(0)}$ and $\xi\in\Xi(\eta)$. Let $y\in Q(x,\xi)$, and let $\Gamma=(y_n)_{n\in\NN}\in\CGR(y,\eta)$ be such that $\Gamma\subseteq Q(y,\xi)$. Then the concatenation of any geodesic path from $x$ to $y$ with $\Gamma$ is a CGR.
\end{lemma}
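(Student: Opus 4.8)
The plan is to show that if $\Gamma_{xy}$ is a geodesic path from $x$ to $y$ and $\Gamma = (y_n)_{n\in\NN}$ is a CGR from $y$ to $\eta$ lying entirely in $Q(y,\xi)$, then the concatenation $\Gamma_{xy}\cdot\Gamma$ is a CGR. There are two things to verify: that $\Gamma_{xy}\cdot\Gamma$ is a geodesic path (i.e. $\dc(x,y_n) = \dc(x,y) + n$ for all $n$), and that it points towards $\eta$ (i.e. it is asymptotic to the CGR defining $\eta$). First I would fix a CGR $(w_n)_{n\in\NN}\in\CGR(y,\eta)$ converging to $\xi$ with $\Gamma_{xy}\cdot(w_n)_{n\in\NN}$ a CGR, which exists since $y\in Q(x,\xi)$; in particular $\dc(x,w_n) = \dc(x,y) + n$ for all $n$.

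The key step is to use Lemma~\ref{lemma:convergence_same_xi} with base point $y$: since each $y_m$ lies in $Q(y,\xi)$ and $(w_n)_{n\in\NN}\in\CGR(y,\eta)$ converges to $\xi$, for each fixed $m$ there is some $N$ (depending on $m$) with $\dc(y,y_m) + \dc(y_m, w_n) = \dc(y,w_n)$ for all $n\geq N$. Since $\dc(y,y_m) = m$, this gives $\dc(y_m, w_n) = \dc(y, w_n) - m = (n - m)$ for $n \geq N$, so $y_m\in\Gamma(y,w_n)$. Then I would estimate $\dc(x, y_m)$: on one hand $\dc(x,y_m) \leq \dc(x,y) + \dc(y,y_m) = \dc(x,y) + m$ by the triangle inequality; on the other hand, $\dc(x, y_m) \geq \dc(x, w_n) - \dc(w_n, y_m) = (\dc(x,y) + n) - (n - m) = \dc(x,y) + m$. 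Hence $\dc(x,y_m) = \dc(x,y) + m$, which shows $\Gamma_{xy}\cdot\Gamma$ is a geodesic ray. Finally, to see it points towards $\eta$: $\Gamma$ itself is a CGR from $y$ to $\eta$, hence asymptotic to $(w_n)_{n}$, and $\Gamma_{xy}\cdot\Gamma$ differs from $\Gamma$ only in the finite initial segment $\Gamma_{xy}$, so it is asymptotic to $\Gamma$ and therefore to $\eta$; being also a geodesic ray, it lies in $\CGR(x,\eta)$.

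The main obstacle is making sure the lower bound argument is correctly quantified: Lemma~\ref{lemma:convergence_same_xi} only gives the cancellation $\dc(y,y_m) + \dc(y_m,w_n) = \dc(y,w_n)$ for $n$ large depending on $m$, so the inequality $\dc(x,y_m)\geq \dc(x,y)+m$ must be derived for each $m$ by choosing such an $n$ — but since $m$ is arbitrary and the resulting equality $\dc(x,y_m)=\dc(x,y)+m$ does not involve $n$, this is harmless. One should also confirm that $\Gamma\subseteq Q(y,\xi)$ is exactly the hypothesis needed to apply Lemma~\ref{lemma:convergence_same_xi} to the vertices $y_m$, which it is by definition of $Q(y,\xi)$.
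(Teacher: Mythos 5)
Your argument is correct. It shares the paper's key step---comparing the vertices $y_m$ of $\Gamma$ with a reference CGR converging to $\xi$ by applying Lemma~\ref{lemma:convergence_same_xi} at the base point $y$---but the second half is organised differently. The paper fixes a CGR $(x_n)\in\CGR(x,\eta)$ through $y$ converging to $\xi$ and applies Lemma~\ref{lemma:convergence_same_xi} a second time, now at the base point $x$ (which requires Lemma~\ref{lemma:Qx_in_Qy} to place $y_r\in Q(y,\xi)\subseteq Q(x,\xi)$), and then subtracts the two resulting equalities. You instead work with the tail $(w_n)$ of such a CGR starting at $y$, combine the single equality $\dc(y_m,w_n)=n-m$ with $\dc(x,w_n)=\dc(x,y)+n$, and finish with a triangle-inequality sandwich; this avoids Lemma~\ref{lemma:Qx_in_Qy} and the second application of Lemma~\ref{lemma:convergence_same_xi} altogether, so it is marginally more elementary while proving the same statement. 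Your quantifier bookkeeping (the threshold $N$ depending on $m$, which is harmless because the conclusion $\dc(x,y_m)=\dc(x,y)+m$ does not involve $n$) is handled correctly, and your explicit remark that a geodesic ray containing $\Gamma$ as a tail is asymptotic to $\eta$ supplies a point the paper leaves implicit. One small wording issue: what $y\in Q(x,\xi)$ gives you directly is a CGR from $x$ through $y$ converging to $\xi$, whose tail from $y$ is your $(w_n)$; the only property you actually use is $\dc(x,w_n)=\dc(x,y)+n$, whereas the stronger claim that $\Gamma_{xy}\cdot(w_n)$ is a CGR for the \emph{arbitrary} given geodesic $\Gamma_{xy}$ is true but requires the same one-line verification as your final step, so it is cleaner to state just the distance identity.
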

\begin{proof}
Let $\Gamma'=(x_n)_{n\in\NN}$ be a CGR from $x$ to $\eta$ passing through $y$ and converging to $\xi$ (see Figure~\ref{figure:Lem3-4}). Let $r\in\NN$. We have to show that
$$\dc(x,y_r)=\dc(x,y)+\dc(y,y_r).$$
Since $y_r\in \Gamma\subseteq Q(y,\xi)$, we find by Lemma~\ref{lemma:convergence_same_xi} some $N_1\in\NN$ such that 
$$\dc(y,y_r)+\dc(y_r,x_n)=\dc(y,x_n)\quad\textrm{for all $n\geq N_1$.}$$
Similarly, since $y\in Q(x,\xi)$, we have $y_r\in Q(y,\xi)\subseteq Q(x,\xi)$ by Lemma~\ref{lemma:Qx_in_Qy}, and hence we find by  Lemma~\ref{lemma:convergence_same_xi} some $N_2\in\NN$ such that 
$$\dc(x,y_r)+\dc(y_r,x_n)=\dc(x,x_n)\quad\textrm{for all $n\geq N_2$.}$$
Let $N=\max\{N_1,N_2\}$. Then
\begin{align*}
\dc(x,y_r)-\dc(y,y_r)&=\dc(x,x_N)-\dc(y_r,x_N)-\dc(y,x_N)+\dc(y_r,x_N)\\
&=\dc(x,y),
\end{align*}
as desired.
\end{proof}

\begin{figure}
\centering
  \includegraphics[trim = 54mm 157mm 20mm 67mm, clip,width=10cm]{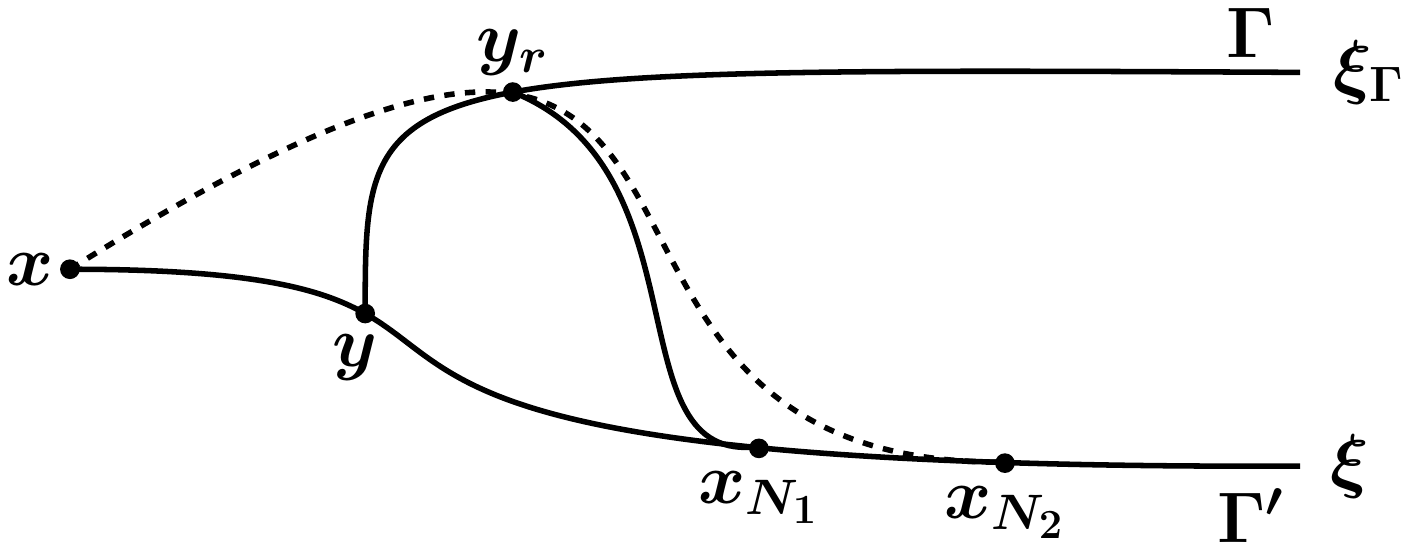}
  \captionof{figure}{Lemma~\ref{lemma:reorientate_CGR}}
  \label{figure:Lem3-4}
\end{figure}

\begin{lemma}\label{lemma:pre-min_stripe}
Let $x\in X^{(0)}$ and $\xi\in\Xi(\eta)$, and let $\Gamma\in\CGR(x,\eta)$. If $\Gamma\subseteq Q(x,\xi)$, then $Q(x,\xi_{\Gamma})\subseteq Q(x,\xi)$.
\end{lemma}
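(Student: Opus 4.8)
The plan is to apply Lemma~\ref{lemma:alt_desc_Qxxi} twice: once to rewrite $Q(x,\xi_\Gamma)$ as a union of geodesic intervals issued from $x$, and once more to recognise each such interval inside $Q(x,\xi)$.

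First I would write $\Gamma=(x_n)_{n\in\NN}$. Since $\Gamma$ converges to $\xi_\Gamma$ by definition, and $\xi_\Gamma\in\Xi(\eta)$ (the set $\Xi(\eta)$ being independent of the base point), Lemma~\ref{lemma:alt_desc_Qxxi} applies and gives $Q(x,\xi_\Gamma)=\bigcup_{n\in\NN}\Gamma(x,x_n)$. So it is enough to fix $n\in\NN$ and a vertex $y\in\Gamma(x,x_n)$ and to show that $y\in Q(x,\xi)$.

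Next I would exploit the hypothesis $\Gamma\subseteq Q(x,\xi)$. In particular $x_n\in Q(x,\xi)$, so there is a CGR $\Gamma'=(x_m')_{m\in\NN}\in\CGR(x,\eta)$ converging to $\xi$ and passing through $x_n$, say $x_n=x_j'$. Since both $\Gamma$ and $\Gamma'$ are geodesic rays issued from $x$, we get $j=\dc(x,x_n)=n$, i.e.\ $x_n=x_n'$. Applying Lemma~\ref{lemma:alt_desc_Qxxi} once more, this time to $\Gamma'$ and $\xi$, yields $Q(x,\xi)=\bigcup_{m\in\NN}\Gamma(x,x_m')\supseteq\Gamma(x,x_n')=\Gamma(x,x_n)\ni y$. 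Hence $y\in Q(x,\xi)$, and since $y$ was an arbitrary point of $Q(x,\xi_\Gamma)$ the inclusion follows.

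I do not expect a genuine obstacle here: the statement is essentially a bookkeeping consequence of the combinatorial description of sectors in Lemma~\ref{lemma:alt_desc_Qxxi}. The only point requiring a little care is the index-matching step, where one must note that the two CGR $\Gamma$ and $\Gamma'$ reach $x_n$ at the same parameter precisely because both are geodesic and start at $x$; all the substantive content --- in particular that a vertex lying on a geodesic from $x$ to a far vertex of a CGR still lies on a CGR converging to the same horofunction --- has already been absorbed into the proofs of Lemmas~\ref{lemma:convergence_same_xi} and~\ref{lemma:alt_desc_Qxxi}. (Alternatively, one could argue by hand: choose a geodesic from $x$ to $x_n$ through $y$ and concatenate it with the tail $(x_m')_{m\geq n}$ of $\Gamma'$; the resulting path is a geodesic ray by a length count, is asymptotic to $\Gamma'$ since it shares a tail with it, and converges to $\xi_{\Gamma'}=\xi$ in $\Chor(X)$ since the horofunction limit of a CGR depends only on a tail, as in the proof of Lemma~\ref{lemma:existence}; this exhibits $y$ in $Q(x,\xi)$ directly.)
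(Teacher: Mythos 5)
Your proof is correct and follows essentially the same route as the paper: the paper applies Lemma~\ref{lemma:convergence_same_xi} twice and concatenates the resulting distance equalities to place $y$ on $\Gamma(x,y_M)\subseteq Q(x,\xi)$, whereas you package both applications into Lemma~\ref{lemma:alt_desc_Qxxi} and avoid the concatenation by using the witnessing CGR through $x_n$ converging to $\xi$. This is only a cosmetic repackaging (the index-matching step $j=n$ is not even needed --- any vertex of that CGR would do), not a genuinely different argument.
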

\begin{proof}
Write $\Gamma=(x_n)_{n\in\NN}$, and let $y\in Q(x,\xi_{\Gamma})$. By Lemma~\ref{lemma:convergence_same_xi}, we find some $N\in\NN$ such that 
$$\dc(x,y)+\dc(y,x_N)=\dc(x,x_N).$$
Let $(y_n)_{n\in\NN}\in \CGR(x,\eta)$ converge to $\xi$. Since $x_N\in \Gamma\subseteq Q(x,\xi)$, we find by Lemma~\ref{lemma:convergence_same_xi} some $M\in\NN$ such that 
$$\dc(x,x_N)+\dc(x_N,y_M)=\dc(x,y_M).$$
Hence
$$\dc(x,y)+\dc(y,x_N)+\dc(x_N,y_M)=\dc(x,y_M),$$
so that $y\in\Gamma(x,y_M)\subseteq Q(x,\xi)$, as desired. 
\end{proof}

\begin{lemma}\label{lemma:uniqueness_xi_Q}
Let $x\in X^{(0)}$. If $Q(x,\xi)=Q(x,\xi')$ for some $\xi,\xi'\in\Xi(\eta)$, then $\xi=\xi'$.
\end{lemma}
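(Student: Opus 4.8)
The plan is to show that the horofunction $\xi$ is completely determined by the vertex $x$ together with the combinatorial sector $Q(x,\xi)$ via an explicit formula, and then to invoke the very same formula for $\xi'$: since $Q(x,\xi)=Q(x,\xi')$, the formula will immediately force $\xi=\xi'$.

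The preliminary observation I would make is that $\xi(q)=\xi(x)-\dc(x,q)$ for every $q\in Q(x,\xi)$. Fixing a CGR $\Gamma=(x_n)_{n\in\NN}\in\CGR(x,\eta)$ converging to $\xi$ (such a $\Gamma$ exists since $Q(x,\xi)\neq\emptyset$), Lemma~\ref{lemma:convergence_same_xi} gives some $N\in\NN$ with $\dc(q,x_n)=\dc(x,x_n)-\dc(x,q)=n-\dc(x,q)$ for all $n\geq N$, so that $f_{x_n}(q)-f_{x_n}(x)=\dc(x_n,q)-\dc(x_n,x)=-\dc(x,q)$ for $n\geq N$; letting $n\to\infty$ yields the claim.

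Next I would establish the key formula
\[
\xi(y)-\xi(x)=\inf\{\dc(y,q)-\dc(x,q) \ | \ q\in Q(x,\xi)\}\qquad\textrm{for all }y\in X^{(0)}.
\]
For the inequality ``$\geq$'' I would use that $\xi$ is $1$-Lipschitz (being an element of $\FFF(X,z_0)$) together with the preliminary observation: for every $q\in Q(x,\xi)$ we have $\xi(y)\leq\xi(q)+\dc(y,q)=\xi(x)-\dc(x,q)+\dc(y,q)$. For ``$\leq$'' I would note that the sequence $\dc(y,x_n)-n$ is non-increasing in $n$ (triangle inequality), that $\dc(y,x_n)-n=f_{x_n}(y)-f_{x_n}(x)\to\xi(y)-\xi(x)$, and that each $x_n$ lies in $Q(x,\xi)$ with $\dc(x,x_n)=n$, so that the infimum over $Q(x,\xi)$ is at most $\lim_{n}(\dc(y,x_n)-n)=\xi(y)-\xi(x)$.

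Finally, since $Q(x,\xi)=Q(x,\xi')$, applying the formula to both $\xi$ and $\xi'$ gives $\xi(y)-\xi(x)=\xi'(y)-\xi'(x)$ for all $y\in X^{(0)}$; evaluating at $y=z_0$ and using $\xi(z_0)=\xi'(z_0)=0$ gives $\xi(x)=\xi'(x)$, whence $\xi=\xi'$. The only points requiring a little care are the identity $\xi(q)=\xi(x)-\dc(x,q)$ and the monotonicity/convergence of the sequence $\dc(y,x_n)-n$; everything else is purely formal, so I do not expect a genuine obstacle here.
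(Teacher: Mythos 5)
Your proof is correct, but it follows a genuinely different route from the paper's. The paper fixes two CGRs $(x_n)$ and $(y_n)$ from $x$ converging to $\xi$ and $\xi'$ and, for each vertex $z$, exploits the equality $Q(x,\xi)=Q(x,\xi')$ twice through Lemma~\ref{lemma:convergence_same_xi} to obtain collinearity relations, then feeds these into the reorientation Lemma~\ref{lemma:reorientate_CGR} (via Lemma~\ref{lemma:Qx_in_Qy}) to force $\dc(z,x_N)=\dc(z,y_N)$ and hence $\xi(z)=\xi'(z)$. You instead prove a reconstruction formula: first $\xi(q)=\xi(x)-\dc(x,q)$ for $q\in Q(x,\xi)$ (a direct consequence of Lemma~\ref{lemma:convergence_same_xi} together with the pointwise convergence $f_{x_n}\to\xi$), and then $\xi(y)-\xi(x)=\inf_{q\in Q(x,\xi)}\bigl(\dc(y,q)-\dc(x,q)\bigr)$ for all $y$, using only that $\xi$ is $1$-Lipschitz (it lies in $\FFF(X,z_0)$, which is where the closure is taken, so this is automatic) for one inequality, and the vertices $x_n\in Q(x,\xi)$ with $\dc(y,x_n)-n=f_{x_n}(y)-f_{x_n}(x)\to\xi(y)-\xi(x)$ for the other; equality of sectors then gives $\xi-\xi(x)=\xi'-\xi'(x)$, and the normalization $\xi(z_0)=\xi'(z_0)=0$ finishes. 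This buys a shorter argument that bypasses Lemmas~\ref{lemma:Qx_in_Qy} and \ref{lemma:reorientate_CGR} entirely and yields the stronger fact that $\xi$ is explicitly recoverable from the pair $\bigl(x,Q(x,\xi)\bigr)$, while the paper's route stays within the purely combinatorial collinearity toolkit it has already developed. One cosmetic slip: your labels ``$\geq$'' and ``$\leq$'' are interchanged --- the Lipschitz estimate proves $\xi(y)-\xi(x)\leq\inf$, and the limit along $(x_n)$ proves $\inf\leq\xi(y)-\xi(x)$ --- but both inequalities are in fact established, so nothing is missing.
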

\begin{proof}
For any two vertices $a,b\in X^{(0)}$, we fix a geodesic path $\Gamma_{ab}$ from $a$ to $b$.
Let $(x_n)_{n\in\NN}\in\CGR(x,\eta)$ be converging to $\xi$, and $(y_n)_{n\in\NN}\in\CGR(x,\eta)$ be converging to $\xi'$ (see Figure~\ref{figure:Lem3-6}). We claim that for each $z\in X^{(0)}$, there exists some $N=N_z\in\NN$ such that
\begin{equation}\label{eqn:TPuniq}
\xi(z)=f_{x_n}(z), \quad \xi'(z)=f_{y_n}(z)\quad\textrm{and}\quad\dc(z,x_n)=\dc(z,y_n)\quad\textrm{for all $n\geq N$.}
\end{equation}
This will imply that for each $z\in X^{(0)}$,
$$\xi(z)=f_{x_n}(z)=\dc(x_n,z)-\dc(x_n,z_0)=\dc(y_n,z)-\dc(y_n,z_0)=f_{y_n}(z)=\xi'(z)$$
for all $n\geq \max\{N_z,N_{z_0}\}$, yielding the lemma.

Let thus $z\in X^{(0)}$. Let $N\in\NN$ be such that $\xi(z)=f_{x_n}(z)$ and $\xi'(z)=f_{y_n}(z)$ for all $n\geq N$, and such that 
\begin{equation}\label{eqn:feuille2}
\textrm{$\Gamma_{zx_N}\cdot (x_n)_{n\geq N}$ and $\Gamma_{zy_N}\cdot (y_n)_{n\geq N}$ are CGR}
\end{equation}
(see Lemma~\ref{lemma:preparation}). We will prove that $\dc(z,x_N)=\dc(z,y_N)$, yielding (\ref{eqn:TPuniq}).

Since $y_N\in Q(x,\xi')=Q(x,\xi)$, we find by Lemma~\ref{lemma:convergence_same_xi} some $N'>N$ such that
\begin{equation}\label{eqn:feuille3}
\dc(x,y_N)+\dc(y_N,x_{N'})=\dc(x,x_{N'}).
\end{equation}
In particular,
\begin{equation}\label{eqn:feuille3bis}
\dc(y_N,x_{N'})=\dc(x,x_{N'})-\dc(x,y_N)=\dc(x,x_{N'})-\dc(x,x_N)=\dc(x_N,x_{N'}).
\end{equation}
Similarly, since $x_{N'}\in Q(x,\xi)=Q(x,\xi')$, we find by Lemma~\ref{lemma:convergence_same_xi} some $N''>N'$ such that
\begin{equation}\label{eqn:feuille4}
\dc(x,x_{N'})+\dc(x_{N'},y_{N''})=\dc(x,y_{N''}).
\end{equation}
It then follows from (\ref{eqn:feuille3}) and (\ref{eqn:feuille4}) that
\begin{equation*}
\dc(y_N,x_{N'})+\dc(x_{N'},y_{N''})=-\dc(x,y_N)+\dc(x,y_{N''})=\dc(y_N,y_{N''}).
\end{equation*}
In particular, $x_{N'}$ is on the CGR $\Gamma_{y_N\xi'}:=\Gamma_{y_Nx_{N'}}\cdot \Gamma_{x_{N'}y_{N''}}\cdot (y_n)_{n\geq N''}$, so that $x_{N'}\in Q(y_N,\xi')$. As $y_N\in Q(z,\xi')$ by (\ref{eqn:feuille2}), it then follows from Lemma~\ref{lemma:reorientate_CGR} (applied to $x:=z$, $\xi:=\xi'$, $y:=y_N$ and $\Gamma:=\Gamma_{y_N\xi'}$) that 
\begin{equation}\label{eqn:feuille5}
\dc(z,y_N)+\dc(y_N,x_{N'})=\dc(z,x_{N'}).
\end{equation}
As
$$\dc(z,x_N)+\dc(x_N,x_{N'})=\dc(z,x_{N'})$$
by (\ref{eqn:feuille2}), we deduce from (\ref{eqn:feuille5}) that
\begin{align*}
\dc(z,y_N)&=\dc(z,x_{N'})-\dc(y_N,x_{N'})=\dc(z,x_N)+\dc(x_N,x_{N'})-\dc(y_N,x_{N'})\\
&=\dc(z,x_N),
\end{align*}
where the last equality follows from (\ref{eqn:feuille3bis}), as desired.
\end{proof}

\begin{figure}
  \centering
  \includegraphics[trim = 54mm 162mm 23mm 45mm, clip, width=10cm]{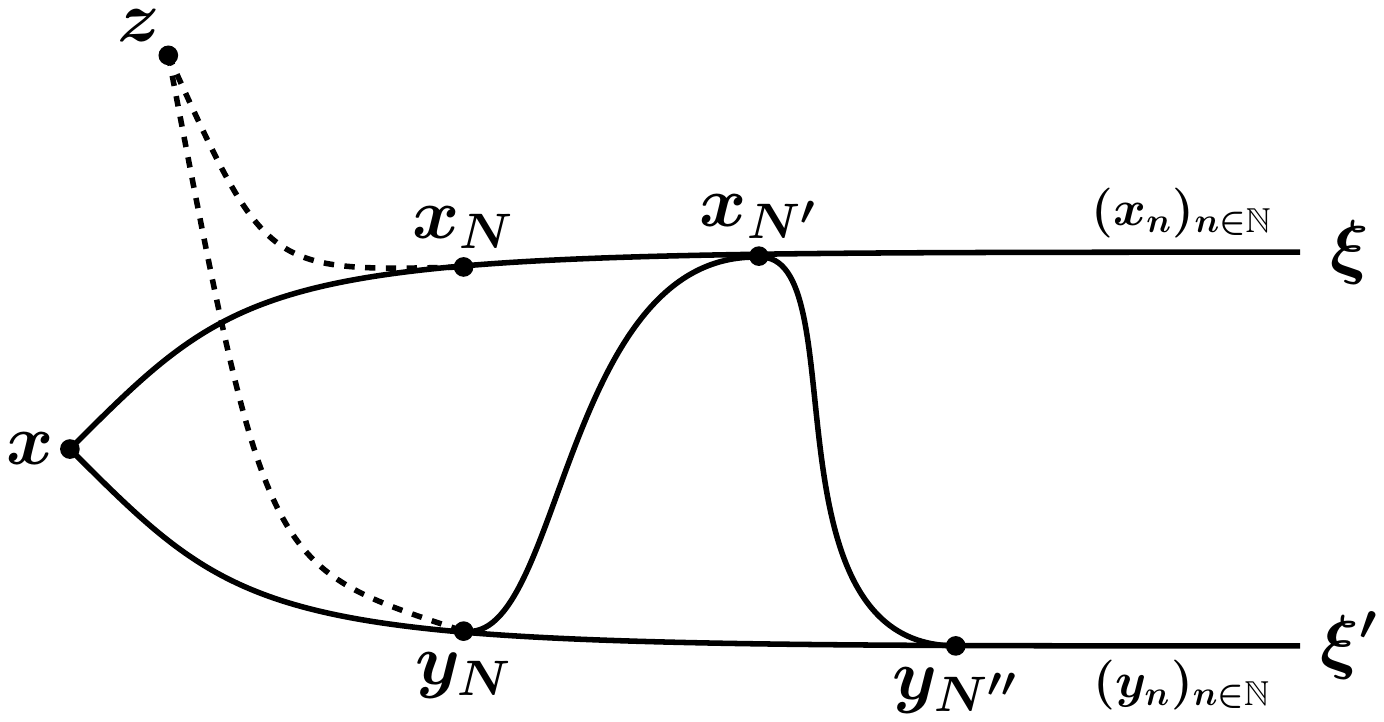}
  \captionof{figure}{Lemma~\ref{lemma:uniqueness_xi_Q}}
  \label{figure:Lem3-6}
\end{figure}

\begin{lemma}\label{lemma:xixeta}
Let $x\in X^{(0)}$. If $\bigcap_{\xi\in\Xi(\eta)}Q(x,\xi)$ contains a CGR from $x$ to $\eta$, then there exists a unique $\xi_{x,\eta}\in\Xi(\eta)$ such that $Q(x,\xi_{x,\eta})=\bigcap_{\xi\in\Xi(\eta)}Q(x,\xi)$.
\end{lemma}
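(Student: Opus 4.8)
The plan is to let $\Gamma=(x_n)_{n\in\NN}$ be a CGR from $x$ to $\eta$ contained in $\bigcap_{\xi\in\Xi(\eta)}Q(x,\xi)$ (such a $\Gamma$ exists by hypothesis), to let $\xi_\Gamma\in\Chb(X)$ be the limit of $\Gamma$ furnished by Lemma~\ref{lemma:existence}, and to show that $\xi_{x,\eta}:=\xi_\Gamma$ has the required property. The whole argument is an assembly of the lemmas of this section; the only step needing a little attention is checking that $\xi_\Gamma$ lies in $\Xi(\eta)$ (so that $Q(x,\xi_\Gamma)$ is even defined), and that presents no genuine obstacle.

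First I would verify $\xi_\Gamma\in\Xi(\eta)$. By Lemma~\ref{lemma:preparation} there is some $k\in\NN$ such that, fixing a geodesic path $\Gamma_{z_0x_k}$ from $z_0$ to $x_k$, the concatenation $\Gamma':=\Gamma_{z_0x_k}\cdot(x_n)_{n\geq k}$ is a CGR; clearly $\Gamma'\in\CGR(z_0,\eta)$, and since $\Gamma'$ shares a tail with $\Gamma$ it converges in $\Chor(X)$ to the same point $\xi_\Gamma$. Hence $\xi_\Gamma=\xi_{\Gamma'}\in\Xi(\eta)$ — this is exactly the base-point independence of $\Xi(\eta)$ recorded after its definition. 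In particular this also shows $\Xi(\eta)\neq\emptyset$, so the statement to be proved is not vacuous.

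Next I would establish the two inclusions. Since $\xi_\Gamma\in\Xi(\eta)$, it is one of the indices in the intersection, so $\bigcap_{\xi\in\Xi(\eta)}Q(x,\xi)\subseteq Q(x,\xi_\Gamma)$ trivially. For the reverse inclusion, fix an arbitrary $\xi\in\Xi(\eta)$: by hypothesis $\Gamma\subseteq Q(x,\xi)$, so Lemma~\ref{lemma:pre-min_stripe} (applied with this $\xi$ and with $\Gamma\in\CGR(x,\eta)$) gives $Q(x,\xi_\Gamma)\subseteq Q(x,\xi)$. As $\xi\in\Xi(\eta)$ was arbitrary, $Q(x,\xi_\Gamma)\subseteq\bigcap_{\xi\in\Xi(\eta)}Q(x,\xi)$, and therefore $Q(x,\xi_\Gamma)=\bigcap_{\xi\in\Xi(\eta)}Q(x,\xi)$. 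Finally, uniqueness is immediate from Lemma~\ref{lemma:uniqueness_xi_Q}: if $\xi'\in\Xi(\eta)$ also satisfies $Q(x,\xi')=\bigcap_{\xi\in\Xi(\eta)}Q(x,\xi)=Q(x,\xi_\Gamma)$, then $\xi'=\xi_\Gamma$. Setting $\xi_{x,\eta}:=\xi_\Gamma$ completes the proof.
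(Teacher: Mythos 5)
Your proof is correct and follows essentially the same route as the paper: apply Lemma~\ref{lemma:pre-min_stripe} to get $Q(x,\xi_\Gamma)\subseteq Q(x,\xi)$ for every $\xi\in\Xi(\eta)$, conclude equality with the intersection, and invoke Lemma~\ref{lemma:uniqueness_xi_Q} for uniqueness. The only difference is that you spell out the (correct) check that $\xi_\Gamma\in\Xi(\eta)$ via Lemma~\ref{lemma:preparation}, which the paper leaves implicit in its remark on base-point independence of $\Xi(\eta)$.
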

\begin{proof}
Let $\Gamma\in\CGR(x,\eta)$ be contained in $\bigcap_{\xi\in\Xi(\eta)}Q(x,\xi)$. Then $Q(x,\xi_{\Gamma})\subseteq  \bigcap_{\xi\in\Xi(\eta)}Q(x,\xi)$ by Lemma~\ref{lemma:pre-min_stripe}, and hence $Q(x,\xi_{\Gamma})= \bigcap_{\xi\in\Xi(\eta)}Q(x,\xi)$. The uniqueness statement follows from Lemma~\ref{lemma:uniqueness_xi_Q}.
\end{proof}

\begin{definition}\label{definition:special}
Call $x\in X^{(0)}$ {\bf $\eta$-special} if $\bigcap_{\xi\in\Xi(\eta)}Q(x,\xi)$ contains a CGR from $x$ to $\eta$. We let $\Xs$ denote the set of $\eta$-special vertices. If $x\in\Xs$, we let $\xi_{x,\eta}$ be the unique element of $\Xi(\eta)$ such that $Q(x,\xi_{x,\eta})=\bigcap_{\xi\in\Xi(\eta)}Q(x,\xi)$ (see Lemma~\ref{lemma:xixeta}).
\end{definition}

\begin{definition}\label{definition:straight}
Let $x\in X^{(0)}$. We call $\Gamma\in\CGR(x,\eta)$ {\bf straight} if $\Gamma\subseteq \bigcup_{n\in\NN}\Gamma(x,y_n)$ for all $(y_n)_{n\in\NN}\in\CGR(x,\eta)$.
\end{definition}

\begin{example}
In the context of Example~\ref{example:A2tilde}, every vertex $x\in X^{(0)}$ is $\eta$-special (see \cite[Lemma~3.3]{geohyperbolic}). The element $\xi_{x,\eta}\in\Xi(\eta)$ is represented on Figure~\ref{figure:A2tilde}; the combinatorial sector $Q(x,\xi_{x,\eta})$ in this case coincides with the unique straight CGR from $x$ to $\eta$ (contained in the ``stripe'' between the two adjacent lines in the direction of $\eta$ that are on each side of $x$).
\end{example}

\begin{example}
In the context of Example~\ref{example:bad_ladder} (see Figure~\ref{figure:bad_ladder}), the vertices $x_n$ and $y_n$ ($n\geq 1$) are $\eta$-special: we have $\xi_{x_n,\eta}=\xi_x$ and $\xi_{y_n,\eta}=\xi_y$ for all $n\geq 1$. The vertices $z_n$ ($n\geq 1$), on the other hand, are not $\eta$-special, as $Q(z_n,\xi_x)\cap Q(z_n,\xi_y)=\{z_n\}$ for each $n\geq 1$.
\end{example}

\begin{lemma}\label{lemma:straight_equival}
Let $x\in X^{(0)}$ and $\Gamma=(x_n)_{n\in\NN}\in\CGR(x,\eta)$. Then the following assertions are equivalent:
\begin{enumerate}
\item
$\Gamma$ is straight.
\item
$\Gamma\subseteq \bigcap_{\xi\in\Xi(\eta)}Q(x,\xi)$.
\item
For any $(y_n)_{n\in\NN}\in\CGR(y_0,\eta)$ and any $n\in\NN$, there is some $M\in\NN$ such that $\dc(x,x_n)+\dc(x_n,y_m)=\dc(x,y_m)$ for all $m\geq M$. 
\item
Every sub-CGR of $\Gamma$ (i.e. CGR of the form $(x_n)_{n\geq m}$ for some $m\in\NN$) is straight.
\end{enumerate}
\end{lemma}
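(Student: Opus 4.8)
The plan is to prove the cycle of implications $(1)\Rightarrow(2)\Rightarrow(3)\Rightarrow(4)\Rightarrow(1)$ (with the understanding that in $(3)$ the ray $(y_n)$ should start at $x$, i.e. $y_0 = x$, as otherwise the statement would not be scale-invariant; I expect this is a typo for $\CGR(x,\eta)$). The implication $(1)\Rightarrow(2)$ is essentially a matter of unpacking definitions: by Lemma~\ref{lemma:alt_desc_Qxxi}, for each $\xi\in\Xi(\eta)$ we have $Q(x,\xi)=\bigcup_{n}\Gamma(x,y_n)$ for any $(y_n)\in\CGR(x,\eta)$ converging to $\xi$; so the straightness condition $\Gamma\subseteq\bigcup_n\Gamma(x,y_n)$ for \emph{all} such $(y_n)$ immediately gives $\Gamma\subseteq Q(x,\xi)$ for all $\xi$, hence $\Gamma\subseteq\bigcap_{\xi}Q(x,\xi)$. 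Conversely $(2)\Rightarrow(1)$ is equally direct: if $(y_n)\in\CGR(x,\eta)$ is arbitrary, pass to a sub-CGR converging to some $\xi\in\Xi(\eta)$ (possible by Lemma~\ref{lemma:preparation} together with Lemma~\ref{lemma:existence}); then $\Gamma\subseteq Q(x,\xi)=\bigcup_m\Gamma(x,y_m')$ where $(y_m')$ is that convergent sub-CGR, and since each $\Gamma(x,y_m')$ is contained in some $\Gamma(x,y_n)$ we get $\Gamma\subseteq\bigcup_n\Gamma(x,y_n)$. So $(1)\Leftrightarrow(2)$ comes almost for free.

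For $(1)/(2)\Rightarrow(3)$: fix $n$ and a CGR $(y_m)_{m}$ from $x$ to $\eta$. Straightness gives $x_n\in\bigcup_m\Gamma(x,y_m)$, so $x_n\in\Gamma(x,y_M)$ for some $M$; i.e. $\dc(x,x_n)+\dc(x_n,y_M)=\dc(x,y_M)$. To upgrade this to all $m\geq M$, note that $\Gamma(x,y_M)$ lies along a geodesic from $x$ to $y_M$, and for $m\geq M$ the vertex $y_m$ lies on a geodesic ray through $y_M$; concatenating a geodesic $x\to x_n\to y_M$ with the tail $(y_m)_{m\geq M}$ should still be geodesic because $(y_m)$ is a CGR and $x_n$ lies on a geodesic from $x$ towards $\eta$ — more carefully, one can apply Lemma~\ref{lemma:reorientate_CGR} or simply observe that the function $m\mapsto\dc(x,y_m)-\dc(x_n,y_m)$ is non-increasing (distances along the ray), bounded below by $\dc(x,x_n)$ via the triangle inequality, and equals $\dc(x,x_n)$ at $m=M$, hence equals $\dc(x,x_n)$ for all $m\geq M$. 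This gives $(3)$.

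For $(3)\Rightarrow(4)$: let $m_0\in\NN$ and consider the sub-CGR $\Gamma'=(x_n)_{n\geq m_0}$, which starts at $x_{m_0}$. We must show $\Gamma'$ is straight, i.e. for any $(z_n)\in\CGR(x_{m_0},\eta)$ and any $n\geq m_0$, eventually $\dc(x_{m_0},x_n)+\dc(x_n,z_k)=\dc(x_{m_0},z_k)$. Prepend a geodesic from $x$ to $x_{m_0}$ (namely $(x_j)_{j\leq m_0}$) to $(z_k)$; by Lemma~\ref{lemma:reorientate_CGR} — or directly, since $x_{m_0}\in Q(x,\xi)$ where $\xi$ is the limit of $(z_k)$ reoriented appropriately — this concatenation, call it $(w_k)$, is a CGR from $x$ to $\eta$ through $x_{m_0}$. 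Apply $(3)$ to $\Gamma$ and $(w_k)$ at index $n$: eventually $\dc(x,x_n)+\dc(x_n,w_k)=\dc(x,w_k)$. Since also $\dc(x,x_n)=\dc(x,x_{m_0})+\dc(x_{m_0},x_n)$ and $\dc(x,w_k)=\dc(x,x_{m_0})+\dc(x_{m_0},z_{k'})$ for large $k$ (the $w$'s being the $z$'s shifted past $x_{m_0}$), subtracting $\dc(x,x_{m_0})$ yields the desired identity for $\Gamma'$. Finally $(4)\Rightarrow(1)$ is trivial since $\Gamma$ is itself a sub-CGR of $\Gamma$ (take $m=0$).

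The main obstacle is the bookkeeping in $(3)\Rightarrow(4)$ and in the "upgrade to all $m\geq M$" step of $(1)\Rightarrow(3)$: one must be careful that prepending a geodesic segment to a CGR converging to $\eta$ and reoriented to pass through an intermediate special-type vertex genuinely produces a CGR, which is exactly the content of Lemma~\ref{lemma:reorientate_CGR} but requires checking that its hypotheses ($y\in Q(x,\xi)$ and the ray being contained in $Q(y,\xi)$) are met; alternatively, the monotone-function argument sidesteps Lemma~\ref{lemma:reorientate_CGR} entirely and is probably cleanest. Everything else is routine manipulation of the identity $\dc(x,y)=\dc(x,z)+\dc(z,y)$ and the observation that distances from a fixed basepoint to points along a CGR are eventually affine in the index (Lemma~\ref{lemma:preparation}).
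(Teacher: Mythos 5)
Your proposal rests on a misreading of item (3): the base point $y_0$ there is \emph{not} a typo. The lemma deliberately quantifies over CGR $(y_n)_{n\in\NN}\in\CGR(y_0,\eta)$ with arbitrary starting vertex $y_0$, and this generality is the whole point. The paper's proof of (2)$\implies$(3) handles it by taking $\xi_{\Gamma'}\in\Xi(\eta)$ (which makes sense because $\Xi(\eta)$ is base-point independent), using Lemma~\ref{lemma:preparation} to find $M$ with $\Gamma_{x_ny_M}\cdot(y_m)_{m\geq M}$ a CGR, and then applying Lemma~\ref{lemma:reorientate_CGR} with $y:=x_n$; this yields $\dc(x,x_n)+\dc(x_n,y_m)=\dc(x,y_m)$ for all large $m$, for rays based anywhere. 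The general form is exactly what makes (1)$\implies$(4) immediate (subtract the relations for $x_n$ and $x_k$ against a ray based at an arbitrary point, e.g.\ at $x_k$ itself), and it is the form that is verified later via Lemma~\ref{lemma:existence_special_vertices}(4) in the proof of Lemma~\ref{lemma:substraight}. By restricting (3) to rays from $x$, you prove a weaker statement than the lemma: your cycle $(1)\Rightarrow(2)\Rightarrow(3_{\mathrm{restricted}})\Rightarrow(4)\Rightarrow(1)$ never establishes $(1)\Rightarrow(3)$ as stated, so the proof is incomplete. The missing implication can be filled with exactly the prepending technique you already invoke in your $(3)\Rightarrow(4)$ step (and it is the paper's argument), but it has to be done; your "scale-invariance" objection is unfounded.

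Two smaller points. In your upgrade "to all $m\geq M$" the parenthetical justification has both directions flipped: $m\mapsto\dc(x,y_m)-\dc(x_n,y_m)$ is \emph{non-decreasing} (since $\dc(x,y_{m+1})-\dc(x,y_m)=1$ while $\dc(x_n,y_{m+1})-\dc(x_n,y_m)\geq-1$) and is bounded \emph{above} by $\dc(x,x_n)$ by the triangle inequality; with these corrections the conclusion that it is constant past $M$ does follow. Finally, in $(3)\Rightarrow(4)$ your appeal to Lemma~\ref{lemma:reorientate_CGR} needs the hypothesis $x_{m_0}\in Q(x,\xi_z)$, where $\xi_z$ is the limit of $(z_k)$; this does follow from your restricted (3) (reorient $(z_k)$ to a CGR from $x$ via Lemma~\ref{lemma:preparation} and use Lemma~\ref{lemma:alt_desc_Qxxi}), but it should be spelled out rather than gestured at, since this is precisely the kind of step the "bad ladder" examples show cannot be taken for granted.
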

\begin{proof}
(1)$\implies$(2): Let $\xi\in\Xi(\eta)$, and let $(y_n)_{n\in\NN}\in\CGR(x,\eta)$ be converging to $\xi$. Then $\Gamma\subseteq \bigcup_{n\in\NN}\Gamma(x,y_n)$ by assumption and $\bigcup_{n\in\NN}\Gamma(x,y_n)=Q(x,\xi)$ by Lemma~\ref{lemma:alt_desc_Qxxi}, yielding (2).

(2)$\implies$(3): Let $\Gamma'=(y_n)_{n\in\NN}\in\CGR(y_0,\eta)$ and let $n\in\NN$. By assumption, $x_n\in\Gamma\subseteq Q(x,\xi_{\Gamma'})$. By Lemma~\ref{lemma:preparation}, there is some $M\in\NN$ such that the concatenation $\widetilde{\Gamma}:=\Gamma_{x_ny_M}\cdot (y_m)_{m\geq M}$ of a geodesic $\Gamma_{x_ny_M}$ from $x_n$ to $y_M$ with the CGR $(y_m)_{m\geq M}$ is again a CGR. Hence (3) follows from Lemma~\ref{lemma:reorientate_CGR} (with $x:=x$, $\xi:=\xi_{\Gamma'}$, $y:=x_n$ and $\Gamma:=\widetilde{\Gamma}$).

(3)$\implies$(1): Let $(y_n)_{n\in\NN}\in\CGR(x,\eta)$. By assumption, we find for any $n\in\NN$ some $M\in\NN$ such that $x_n\in \Gamma(x,y_M)$. Hence $\Gamma\subseteq\bigcup_{M\in\NN}\Gamma(x,y_M)$, yielding (1).

(4)$\iff$(1): The implication (4)$\implies$(1) is trivial, and the converse is clear in view of the equivalence (1)$\iff$(3) that we have just established.
\end{proof}

\begin{lemma}\label{lemma:equivalence_special}
Let $x\in X^{(0)}$. Then the following assertions are equivalent.
\begin{enumerate}
\item
$x$ is $\eta$-special.
\item
There exists a straight $\Gamma\in\CGR(x,\eta)$.
\end{enumerate}
If the above assertions hold, then $\xi_{x,\eta}=\xi_{\Gamma}$.
\end{lemma}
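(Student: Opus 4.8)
The plan is to prove the equivalence (1)$\iff$(2) in both directions, using the characterisation of straightness from Lemma~\ref{lemma:straight_equival}(2), namely that $\Gamma\in\CGR(x,\eta)$ is straight if and only if $\Gamma\subseteq\bigcap_{\xi\in\Xi(\eta)}Q(x,\xi)$.

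First I would show (2)$\implies$(1). Suppose $\Gamma\in\CGR(x,\eta)$ is straight. By Lemma~\ref{lemma:straight_equival}, $\Gamma\subseteq\bigcap_{\xi\in\Xi(\eta)}Q(x,\xi)$; in particular, $\bigcap_{\xi\in\Xi(\eta)}Q(x,\xi)$ contains a CGR from $x$ to $\eta$, so $x$ is $\eta$-special by Definition~\ref{definition:special}. Moreover, since $\Gamma\subseteq Q(x,\xi)$ for every $\xi\in\Xi(\eta)$, we have in particular $\Gamma\subseteq Q(x,\xi)$ for any fixed $\xi$, so by Lemma~\ref{lemma:pre-min_stripe} we get $Q(x,\xi_{\Gamma})\subseteq Q(x,\xi)$; taking the intersection over all $\xi\in\Xi(\eta)$ yields $Q(x,\xi_{\Gamma})\subseteq\bigcap_{\xi\in\Xi(\eta)}Q(x,\xi)=Q(x,\xi_{x,\eta})$. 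Since $\xi_{\Gamma}\in\Xi(\eta)$ as well (because $\Gamma\in\CGR(x,\eta)$, and $\Xi(\eta)$ is independent of the base point by the remark following its definition, via Lemma~\ref{lemma:preparation}), we also have $Q(x,\xi_{x,\eta})\subseteq Q(x,\xi_{\Gamma})$ by definition of $\xi_{x,\eta}$. Hence $Q(x,\xi_{\Gamma})=Q(x,\xi_{x,\eta})$, and Lemma~\ref{lemma:uniqueness_xi_Q} forces $\xi_{\Gamma}=\xi_{x,\eta}$, which also establishes the final assertion.

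Next, (1)$\implies$(2). Suppose $x$ is $\eta$-special, so $Q(x,\xi_{x,\eta})=\bigcap_{\xi\in\Xi(\eta)}Q(x,\xi)$ contains some CGR $\Gamma_0\in\CGR(x,\eta)$. I claim that $\Gamma_0$ is straight, which amounts (by Lemma~\ref{lemma:straight_equival}(2)) to showing $\Gamma_0\subseteq\bigcap_{\xi\in\Xi(\eta)}Q(x,\xi)$ --- but this is exactly the containment we assumed. Hence $\Gamma_0$ is a straight CGR from $x$ to $\eta$, giving (2). (In fact $\xi_{\Gamma_0}=\xi_{x,\eta}$ by the argument in the previous paragraph, so the two directions fit together consistently.)

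I do not anticipate a serious obstacle here: the statement is essentially a repackaging of Definition~\ref{definition:special} through the dictionary provided by Lemma~\ref{lemma:straight_equival}. The one point requiring a little care is that in the direction (2)$\implies$(1) one must know $\xi_{\Gamma}\in\Xi(\eta)$ in order to invoke the defining minimality property of $\xi_{x,\eta}$; this is where the base-point independence of $\Xi(\eta)$ (Lemma~\ref{lemma:preparation} and the remark after the definition of $\Xi(\eta)$) is used, since $\Gamma$ starts at $x$ rather than at $z_0$. The rest is a direct chain of the already-established Lemmas~\ref{lemma:pre-min_stripe} and \ref{lemma:uniqueness_xi_Q}.
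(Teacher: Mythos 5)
Your proof is correct and follows essentially the same route as the paper, which deduces the equivalence directly from Lemma~\ref{lemma:straight_equival}(1)$\iff$(2); your extra steps for $\xi_{x,\eta}=\xi_{\Gamma}$ (via Lemmas~\ref{lemma:pre-min_stripe} and \ref{lemma:uniqueness_xi_Q}) just spell out the content of Lemma~\ref{lemma:xixeta}, which the paper leaves implicit in the definition of $\xi_{x,\eta}$.
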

\begin{proof}
This readily follows from the equivalence (1)$\iff$(2) in Lemma~\ref{lemma:straight_equival}.
\end{proof}

\begin{lemma}\label{lemma:redicrect_xetaxi}
Let $x\in X^{(0)}$ and let $y\in\Geo(x,\eta)$ be $\eta$-special. Then $\Gamma_{xy}\cdot\Gamma_{y\xi_{y,\eta}}\in\CGR(x,\eta)$ for any geodesic path $\Gamma_{xy}$ from $x$ to $y$ and any CGR $\Gamma_{y\xi_{y,\eta}}$ from $y$ converging to $\xi_{y,\eta}$.
\end{lemma}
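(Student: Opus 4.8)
The plan is to deduce this from Lemma~\ref{lemma:reorientate_CGR}, which already shows that a CGR contained in a combinatorial sector $Q(y,\xi)$ can be reoriented along a geodesic $\Gamma_{xy}$ as soon as $y\in Q(x,\xi)$. The whole point is therefore to check that $y$ belongs to the right sector, namely $Q(x,\xi_{y,\eta})$; once this is done the statement follows by a direct application of Lemma~\ref{lemma:reorientate_CGR} with $\xi:=\xi_{y,\eta}$ and $\Gamma:=\Gamma_{y\xi_{y,\eta}}$, using only that every vertex of $\Gamma_{y\xi_{y,\eta}}$ visibly lies on a CGR from $y$ to $\eta$ converging to $\xi_{y,\eta}$, so that $\Gamma_{y\xi_{y,\eta}}\subseteq Q(y,\xi_{y,\eta})$; the resulting concatenation is asymptotic to $\Gamma_{y\xi_{y,\eta}}$ and hence lies in $\CGR(x,\eta)$, for any choice of geodesic $\Gamma_{xy}$.

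To obtain $y\in Q(x,\xi_{y,\eta})$ I would argue as follows. Since $y\in\Geo(x,\eta)$, fix a CGR $\Gamma\in\CGR(x,\eta)$ passing through $y$; by Lemma~\ref{lemma:existence} it converges to some $\xi:=\xi_\Gamma$, which lies in $\Xi(\eta)$ because $\Xi(\eta)$ does not depend on the base point, and $y\in Q(x,\xi)$ by definition of the combinatorial sector. Since $y$ is $\eta$-special, Lemma~\ref{lemma:equivalence_special} provides a straight $\Gamma^*\in\CGR(y,\eta)$ with $\xi_{\Gamma^*}=\xi_{y,\eta}$, and by the characterisation of straightness in Lemma~\ref{lemma:straight_equival} we have $\Gamma^*\subseteq\bigcap_{\xi'\in\Xi(\eta)}Q(y,\xi')=Q(y,\xi_{y,\eta})\subseteq Q(y,\xi)$. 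Applying Lemma~\ref{lemma:reorientate_CGR} to $x$, $\xi$, $y$ and $\Gamma^*$ shows that $\Gamma_{xy}\cdot\Gamma^*$ is a CGR; since it passes through $y$ and eventually coincides with $\Gamma^*$ (hence converges to $\xi_{\Gamma^*}=\xi_{y,\eta}$), this witnesses $y\in Q(x,\xi_{y,\eta})$, as wanted.

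The only real obstacle is this middle step: a priori $\Geo(x,\eta)$ only yields $y\in Q(x,\xi_\Gamma)$ for the (possibly larger, possibly ``wrong'') sector attached to the ray witnessing $y\in\Geo(x,\eta)$, and one must upgrade this to membership in the specific sector $Q(x,\xi_{y,\eta})$. Straightness is precisely the tool that makes this work: a straight CGR from $y$ sits inside \emph{every} sector at $y$, in particular inside $Q(y,\xi_\Gamma)$, which is exactly the hypothesis Lemma~\ref{lemma:reorientate_CGR} needs in order to transport that ray --- and with it the distinguished direction $\xi_{y,\eta}$ --- back to a CGR issuing from $x$.
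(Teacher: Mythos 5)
Your argument is correct and rests on the same two ingredients as the paper's one-line proof: the inclusion $Q(y,\xi_{y,\eta})\subseteq Q(y,\xi_\Gamma)$ coming from the minimality of the special sector at $y$, and Lemma~\ref{lemma:reorientate_CGR}. The paper applies Lemma~\ref{lemma:reorientate_CGR} only once, with $\xi:=\xi_\Gamma$ and directly to $\Gamma_{y\xi_{y,\eta}}$ (which lies in $Q(y,\xi_{y,\eta})\subseteq Q(y,\xi_\Gamma)$), so your intermediate step establishing $y\in Q(x,\xi_{y,\eta})$ via a straight ray --- while valid --- is an unnecessary detour.
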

\begin{proof}
Since $y\in Q(x,\xi_{\Gamma})$ where $\Gamma$ is a CGR from $x$ to $\eta$ passing through $y$, and since $\Gamma_{y\xi_{y,\eta}}\subseteq Q(y,\xi_{y,\eta})\subseteq Q(y,\xi_{\Gamma})$, this follows from 
Lemma~\ref{lemma:reorientate_CGR}.
\end{proof}

\begin{lemma}\label{lemma:left_inclusion}
Let $x\in \Xs$. Then $Q(x,\xi_{x,\eta})\subseteq \Xs$ and $\xi_{y,\eta}=\xi_{x,\eta}$ for all $y\in Q(x,\xi_{x,\eta})$.
\end{lemma}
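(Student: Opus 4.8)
The plan is to fix $x\in\Xs$ and $y\in Q(x,\xi_{x,\eta})$ and to produce a straight CGR from $y$ to $\eta$; by Lemma~\ref{lemma:equivalence_special} this will simultaneously prove $y\in\Xs$ and identify $\xi_{y,\eta}$. First I would choose a straight $\Gamma=(x_n)_{n\in\NN}\in\CGR(x,\eta)$ witnessing that $x$ is $\eta$-special, so that $\xi_{x,\eta}=\xi_\Gamma$ and, by Lemma~\ref{lemma:straight_equival}(2), $\Gamma\subseteq\bigcap_{\xi\in\Xi(\eta)}Q(x,\xi)=Q(x,\xi_{x,\eta})$. Since $y\in Q(x,\xi_{x,\eta})=Q(x,\xi_\Gamma)$, Lemma~\ref{lemma:convergence_same_xi} gives some $N$ with $\dc(x,y)+\dc(y,x_n)=\dc(x,x_n)$ for all $n\geq N$; fixing a geodesic $\Gamma_{yx_N}$ from $y$ to $x_N$ and applying Lemma~\ref{lemma:reorientate_CGR} (with the roles $x:=x$, $\xi:=\xi_\Gamma$, $y:=y$, and $\Gamma$ there equal to $\Gamma_{x_N\xi_\Gamma}:=\Gamma_{x_Nx_N}\cdot(x_n)_{n\geq N}$, after first noting $x_N\in Q(y,\xi_\Gamma)$ via Lemma~\ref{lemma:convergence_same_xi} again), I obtain that $\Gamma_y:=\Gamma_{yx_N}\cdot(x_n)_{n\geq N}$ is a CGR from $y$ to $\eta$ converging to $\xi_\Gamma=\xi_{x,\eta}$.

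The heart of the argument is to show $\Gamma_y$ is straight, i.e.\ $\Gamma_y\subseteq\bigcap_{\zeta\in\Xi(\eta)}Q(y,\zeta)$. Fix $\zeta\in\Xi(\eta)$ and a vertex $w$ on $\Gamma_y$; I must show $w\in Q(y,\zeta)$. The key point is to relate $Q(y,\zeta)$ back to $Q(x,\zeta)$: since $\Gamma$ is straight we have $\Gamma\subseteq Q(x,\zeta)$, and in particular there are vertices $x_n$ with $n$ arbitrarily large lying on CGR from $x$ to $\eta$ converging to $\zeta$; combined with $\dc(x,y)+\dc(y,x_n)=\dc(x,x_n)$ for large $n$, Lemma~\ref{lemma:reorientate_CGR} (reorienting a CGR of the form $\Gamma_{x_n\zeta}$ through $y$, after checking $x_n\in Q(y,\zeta)$ and $y\in Q(x,\zeta)$) shows $y\in Q(x,\zeta)$ and then that $\Gamma_{yx_n}\cdot(\text{tail converging to }\zeta)$ is a CGR, so $x_n\in Q(y,\zeta)$ for all large $n$. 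Now $w$ lies on $\Gamma_y$, which coincides with $\Gamma_{yx_N}\cdot(x_n)_{n\geq N}$, so either $w\in\Gamma_{yx_N}$ — hence $w\in\Gamma(y,x_n)\subseteq Q(y,\zeta)$ for large $n$ by Lemma~\ref{lemma:alt_desc_Qxxi} since $\dc(y,x_N)+\dc(x_N,x_n)=\dc(y,x_n)$ — or $w=x_m$ for some $m\geq N$, in which case $w\in\Gamma(y,x_n)\subseteq Q(y,\zeta)$ for $n\geq m$ large. In all cases $w\in Q(y,\zeta)$, so $\Gamma_y$ is straight.

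Having shown $\Gamma_y$ is a straight CGR from $y$ to $\eta$, Lemma~\ref{lemma:equivalence_special} yields $y\in\Xs$ and $\xi_{y,\eta}=\xi_{\Gamma_y}=\xi_\Gamma=\xi_{x,\eta}$, completing the proof. The main obstacle I anticipate is the bookkeeping in the middle step: one must be careful that for each $\zeta$ the index $N$ (from $y\in Q(x,\xi_{x,\eta})$, already fixed) is compatible with the indices coming from $\Gamma\subseteq Q(x,\zeta)$ and from the various applications of Lemma~\ref{lemma:convergence_same_xi}, which only give ``all sufficiently large $n$'' — so one should phrase everything in terms of cofinitely many $x_n$ and verify that $x_n\in Q(y,\zeta)$ holds for a cofinal set of $n$, which is enough to place any fixed $w$ on $\Gamma_y$ into some $\Gamma(y,x_n)\subseteq Q(y,\zeta)$. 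A clean way to avoid repetition is to observe that the equivalence (1)$\iff$(3) in Lemma~\ref{lemma:straight_equival}, applied to the straight ray $\Gamma$ from $x$, already packages precisely the statement ``for every $(y_n)\in\CGR(\,\cdot\,,\eta)$ and every $n$, $\dc(x,x_n)+\dc(x_n,y_m)=\dc(x,y_m)$ eventually'', which together with $\dc(x,y)+\dc(y,x_n)=\dc(x,x_n)$ transfers straightness from $x$ to $y$ almost formally.
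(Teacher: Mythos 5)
Your proof is correct in substance but takes a different, somewhat longer route than the paper. The paper exploits the definition of the combinatorial sector directly: since $y\in Q(x,\xi_{x,\eta})$, there is a CGR $\Gamma$ from $x$ to $\eta$ \emph{passing through $y$} and converging to $\xi_{x,\eta}$; as $\Gamma\subseteq Q(x,\xi_{x,\eta})=\bigcap_{\xi\in\Xi(\eta)}Q(x,\xi)$, it is straight by Lemma~\ref{lemma:straight_equival}(1)$\Leftrightarrow$(2), and its sub-CGR starting at $y$ is straight by (1)$\Leftrightarrow$(4), so Lemma~\ref{lemma:equivalence_special} finishes the proof in two lines, with no splicing needed. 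You instead take a straight ray $\Gamma$ from $x$ not assumed to pass through $y$, splice $y$ onto its tail using $\dc(x,y)+\dc(y,x_n)=\dc(x,x_n)$, and then transfer straightness; your closing observation that (1)$\Leftrightarrow$(3) makes this transfer formal is exactly right and yields a complete argument (for a vertex $w$ of $\Gamma_y$ and any CGR $(z_m)$ to $\eta$, the two additivity relations give $\dc(y,w)+\dc(w,z_m)\leq\dc(y,z_m)$, and the triangle inequality forces equality). Two caveats about your intermediate, sector-theoretic phrasing: the first invocation of Lemma~\ref{lemma:reorientate_CGR} has its roles garbled (the lemma requires a CGR issued from the intermediate vertex, and the hypothesis $x_N\in Q(y,\xi_\Gamma)$ you propose to ``first note'' is essentially what you are trying to construct, though a one-line computation from $\dc(x,y)+\dc(y,x_n)=\dc(x,x_n)$ settles it); and the middle paragraph is circular as literally written (``after checking $x_n\in Q(y,\zeta)$ \dots\ so $x_n\in Q(y,\zeta)$''). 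Both are harmless because your (1)$\Leftrightarrow$(3) shortcut replaces them, but note that the paper's choice of a CGR through $y$ converging to $\xi_{x,\eta}$ is precisely what buys the shorter proof: then $y$ already lies on a straight ray and (1)$\Leftrightarrow$(4) does all the work.
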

\begin{proof}
Let $y\in Q(x,\xi_{x,\eta})$, and let $\Gamma$ be a CGR from $x$ converging to $\xi_{x,\eta}$ and passing through $y$. Thus $\Gamma\subseteq Q(x,\xi_{x,\eta})$ and hence $\Gamma$ is straight by Lemma~\ref{lemma:straight_equival}(1)$\Leftrightarrow$(2). In particular, the sub-CGR of $\Gamma$ starting at $y$ is straight by Lemma~\ref{lemma:straight_equival}(1)$\Leftrightarrow$(4), so that $y\in\Xs$ and $\xi_{y,\eta}=\xi_{x,\eta}$ by Lemma~\ref{lemma:equivalence_special}.
\end{proof}

\begin{definition}
For $x\in X^{(0)}$ and $\xi\in\Xi(\eta)$, we define the function $$\dist_{x,\xi}\co X^{(0)}\to\NN: a\mapsto \dc(x,a)+\xi(a)-\xi(x).$$
\end{definition}

\begin{lemma}\label{lemma:existence_special_vertices}
Let $x\in X^{(0)}$ and $\xi\in\Xi(\eta)$. Let also $\Gamma=(x_n)_{n\in\NN}\in\CGR(x,\eta)$. 
\begin{enumerate}
\item
The sequence $(\dist_{x,\xi}(x_n))_{n\in\NN}$ is non-decreasing.
\item
$\dist_{x,\xi}(x_n)\leq  2\cdot \dist_{H}(\Gamma,\Gamma')$ for any $n\in\NN$ and for any $\Gamma'\in\CGR(x,\eta)$  converging to $\xi$. 
\item
There is some $N\in\NN$ such that $\dist_{x,\xi}(x_n)=\dist_{x,\xi}(x_N)$ for all $n\geq N$. 
\item
If $N$ is as in (3), then for any $\Gamma'=(y_n)_{n\in\NN}\in\CGR(y_0,\eta)$ converging to $\xi$, and for any $n\geq N$, there exists some $M\in\NN$ such that $\dc(x_N,x_n)+\dc(x_n,y_m)=\dc(x_N,y_m)$ for all $m\geq M$.
\end{enumerate}
\end{lemma}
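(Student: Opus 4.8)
The plan is to deduce all four assertions from the basic properties of horofunctions and the combinatorial sector lemmas already established, treating $\dist_{x,\xi}$ as a "defect from being on a straight CGR towards $\xi$" measure. For (1), I would fix $n$ and compute $\dist_{x,\xi}(x_{n+1})-\dist_{x,\xi}(x_n)=\dc(x,x_{n+1})-\dc(x,x_n)+\xi(x_{n+1})-\xi(x_n)=1+(\xi(x_{n+1})-\xi(x_n))$, using that $\Gamma$ is a CGR (so $\dc(x,x_{n+1})=\dc(x,x_n)+1$); since $\xi\in\FFF(X,z_0)$ is $1$-Lipschitz and $x_n,x_{n+1}$ are adjacent, $\xi(x_{n+1})-\xi(x_n)\geq -1$, so the difference is $\geq 0$.

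For (2), fix $\Gamma'=(x_n')_{n\in\NN}\in\CGR(x,\eta)$ converging to $\xi$, and set $D:=\dist_H(\Gamma,\Gamma')$. Given $n$, choose $m$ large (with $m\geq n$) so that $\xi(x_n)=f_{x_m'}(x_n)=\dc(x_m',x_n)-\dc(x_m',z_0)$ and $\xi(x)=f_{x_m'}(x)=\dc(x_m',x)-\dc(x_m',z_0)$; subtracting, $\xi(x_n)-\xi(x)=\dc(x_m',x_n)-\dc(x_m',x)=\dc(x_m',x_n)-m$ (since $x_m'$ lies on a CGR from $x$, so $\dc(x,x_m')=m$). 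Hence $\dist_{x,\xi}(x_n)=n+\dc(x_m',x_n)-m$. Now $\dc(x_m',x_n)\leq \dc(x_m',x_m)+|m-n|$; but $\dc(x_m',x_m)\leq D$ since $x_m\in\Gamma$ is within Hausdorff distance $D$ of $\Gamma'$ — more carefully, pick $x_k'\in\Gamma'$ with $\dc(x_m,x_k')\leq D$, whence $|m-k|=|\dc(x,x_m)-\dc(x,x_k')|\leq D$ and $\dc(x_m',x_n)\leq \dc(x_m',x_k')+D=|m-k|+D\leq 2D$ when $n=m$; choosing $n$ fixed and $m$ large and bounding $\dc(x_m',x_n)\leq \dc(x_m',x_m)+ (m-n)$ gives $\dist_{x,\xi}(x_n)\leq n + \dc(x_m',x_m) + (m-n) - m=\dc(x_m',x_m)\leq 2D$, using $\dc(x_m',x_m)\le 2D$. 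This is the step requiring the most care with indices, but it is essentially bookkeeping; monotonicity from (1) also lets me replace $x_n$ by any later vertex if convenient.

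Assertion (3) is then immediate: by (1) the integer sequence $(\dist_{x,\xi}(x_n))_{n}$ is non-decreasing, and by (2) it is bounded above by $2\dist_H(\Gamma,\Gamma')$ for a fixed $\Gamma'\in\CGR(x,\eta)$ converging to $\xi$ (such $\Gamma'$ exists since $\xi\in\Xi(\eta)$ and sectors are reoriented via Lemma~\ref{lemma:preparation}), hence eventually constant. For (4), with $N$ as in (3), I claim the sub-CGR $(x_n)_{n\geq N}$ is straight and converges to $\xi$: indeed $\dist_{x,\xi}$ being constant on $(x_n)_{n\geq N}$ forces each $x_n$ ($n\geq N$) to lie in $Q(x,\xi)$ — one shows $\dist_{x,\xi}(a)=0$ exactly characterizes the "extremal" vertices, and more generally constancy along a CGR tail forces that tail into the sector $Q(x_N,\xi)$, after which Lemma~\ref{lemma:reorientate_CGR} applied to any $\Gamma'=(y_n)_{n\in\NN}\in\CGR(y_0,\eta)$ converging to $\xi$ (reorienting through $x_N$ and $x_n$) yields the required $M$ with $\dc(x_N,x_n)+\dc(x_n,y_m)=\dc(x_N,y_m)$ for $m\geq M$. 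The main obstacle I anticipate is pinning down exactly why constancy of $\dist_{x,\xi}$ along the tail places that tail inside the relevant combinatorial sector — this needs the identity $\dist_{x,\xi}(x_n)=\dist_{x,\xi}(x_N)+[\dc(x,x_n)-\dc(x,x_N)-(\dc(x,x_n)-\dc(x,x_N))]$-type cancellation together with Lemma~\ref{lemma:convergence_same_xi}, comparing $\Gamma$ against a CGR realizing $\xi$ — but once that is in place, (4) follows by the same reorientation argument used repeatedly above.
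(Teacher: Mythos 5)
Parts (1)--(3) of your plan are correct and essentially identical to the paper's argument: (1) is the $1$-Lipschitz estimate for $\xi$ (the paper realises it by evaluating against $f_{y_M}$ for a comparison CGR converging to $\xi$, which is the same inequality), your index bookkeeping in (2) gives exactly the bound $2\dist_H(\Gamma,\Gamma')$, and (3) is the combination of (1), (2) and integrality, using base-point independence of $\Xi(\eta)$ to produce a comparison CGR from $x$.

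Part (4), however, has a genuine gap: the step you defer as ``the main obstacle'' is the entire content of (4), and the hint you give for it is vacuous --- the bracketed expression $[\dc(x,x_n)-\dc(x,x_N)-(\dc(x,x_n)-\dc(x,x_N))]$ is identically zero, so it is not an identity that can carry the argument. Moreover, the intermediate claims you lean on are false or unavailable. Constancy of $\dist_{x,\xi}$ along the tail does \emph{not} place the tail in $Q(x,\xi)$: one checks (via Lemmas~\ref{lemma:convergence_same_xi} and \ref{lemma:alt_desc_Qxxi}) that $a\in Q(x,\xi)$ if and only if $\dist_{x,\xi}(a)=0$, and the eventual constant value need not be $0$; e.g.\ in Example~\ref{example:bad_ladder}, $Q(z_1,\xi_x)\cap Q(z_1,\xi_y)=\{z_1\}$, yet along any CGR from $z_1$ both $\dist_{z_1,\xi_x}$ and $\dist_{z_1,\xi_y}$ are eventually constant by (3), so at least one constant is nonzero. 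The tail also need not converge to $\xi$ (it converges to $\xi_\Gamma$, which may differ from $\xi$), and you cannot invoke straightness of the tail: that is Lemma~\ref{lemma:substraight}, which assumes $\Xi(\eta)$ finite (not a hypothesis of this lemma) and whose proof relies on (4), so that route is circular. The correct intermediate statement is $x_n\in Q(x_N,\xi)$, but this is exactly the case $y_0=x_N$ of (4) and requires precisely the computation you never perform --- and once that computation is done, the sector-plus-reorientation detour via Lemma~\ref{lemma:reorientate_CGR} buys nothing, since the general $y_0$ follows by the same two lines. Namely: constancy of $\dist_{x,\xi}(x_n)=\dc(x,x_n)+\xi(x_n)-\xi(x)$ for $n\geq N$ gives $\xi(x_n)-\xi(x_N)=-(\dc(x,x_n)-\dc(x,x_N))=-\dc(x_N,x_n)$; given any $\Gamma'=(y_m)_{m\in\NN}$ converging to $\xi$, pointwise convergence $f_{y_m}\to\xi$ yields $M$ with $f_{y_m}(x_n)=\xi(x_n)$ and $f_{y_m}(x_N)=\xi(x_N)$ for $m\geq M$, whence $\dc(x_n,y_m)-\dc(x_N,y_m)=\xi(x_n)-\xi(x_N)=-\dc(x_N,x_n)$, i.e.\ $\dc(x_N,x_n)+\dc(x_n,y_m)=\dc(x_N,y_m)$. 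This direct horofunction computation is the paper's proof of (4), and it is the piece missing from your proposal.
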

\begin{proof}
(1) Let $\Gamma'=(y_n)_{n\in\NN}\in\CGR(x,\eta)$ be converging to $\xi$. For all large enough $M\in\NN$, we have
\begin{align*}
\dist_{x,\xi}(x_{n+1})-\dist_{x,\xi}(x_n)&=\dc(x,x_{n+1})+\dc(x_{n+1},y_M)-\dc(x,x_n)-\dc(x_n,y_M)\\
&=1+\dc(x_{n+1},y_M)-\dc(x_n,y_M)\geq 0.
\end{align*}

(2) Let $\Gamma'=(y_n)_{n\in\NN}\in\CGR(x,\eta)$ be converging to $\xi$. Let $n\in\NN$. Then for all large enough $M\in\NN$, we have $$\dist_{x,\xi}(x_n)=\dc(x,x_n)+\dc(x_n,y_M)-\dc(x,y_M)\leq 2\cdot\dist(x_n,\Gamma')\leq 2\cdot \dist_{H}(\Gamma,\Gamma'),$$ as desired.

(3) Let $\Gamma'\in\CGR(x,\eta)$ be converging to $\xi$. Since $\dist_H(\Gamma,\Gamma')<\infty$, it follows from (2) that the set $\{\dist_{x,\xi}(x_n) \ | \ n\in\NN\}$ is finite. In view of (1), this implies that $(\dist_{x,\xi}(x_n))_{n\in\NN}$ is eventually constant, as desired.

(4) Let $(y_n)_{n\in\NN}\in\CGR(y_0,\eta)$ be converging to $\xi$ and $n\geq N$. Let $M\in\NN$ be such that $\xi(x_n)-\xi(x)=\dc(y_m,x_n)-\dc(y_m,x)$ and $\xi(x_N)-\xi(x)=\dc(y_m,x_N)-\dc(y_m,x)$ for all $m\geq M$. Then for all $m\geq M$, we have
\begin{align*}
\dc(x,x_n)+\dc(y_m,x_n)&=\dist_{x,\xi}(x_n)+\dc(y_m,x)=\dist_{x,\xi}(x_N)+\dc(y_m,x)\\
&=\dc(x,x_N)+\dc(y_m,x_N),
\end{align*}
and hence
\begin{align*}
\dc(x_N,x_n)+\dc(x_n,y_m)&=\dc(x_N,x_n)+\dc(x,x_N)-\dc(x,x_n)+\dc(y_m,x_N)\\
&=\dc(x_N,y_m),
\end{align*}
yielding the claim.
\end{proof}

\begin{lemma}\label{lemma:substraight}
Let $x\in X^{(0)}$ and $(x_n)_{n\in\NN}\in\CGR(x,\eta)$. If $\Xi(\eta)$ is finite, then there is some $N\in\NN$ such that $(x_n)_{n\geq N}$ is straight.
\end{lemma}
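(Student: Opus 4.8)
The plan is to use the finiteness of $\Xi(\eta)$ to run the following argument: each $\xi\in\Xi(\eta)$ contributes a bounded ``defect'' $\dist_{x,\xi}(x_n)$ along $\Gamma$, and once all these defects have stabilized (which happens eventually since $\Xi(\eta)$ is finite and each defect is eventually constant by Lemma~\ref{lemma:existence_special_vertices}(3)), the tail of $\Gamma$ will be straight. First I would fix, for each $\xi\in\Xi(\eta)$, an index $N_\xi$ as provided by Lemma~\ref{lemma:existence_special_vertices}(3), so that $\dist_{x,\xi}(x_n)=\dist_{x,\xi}(x_{N_\xi})$ for all $n\geq N_\xi$. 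Since $\Xi(\eta)$ is finite, $N:=\max_{\xi\in\Xi(\eta)}N_\xi$ is a well-defined natural number.

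Next I would show that $\Gamma':=(x_n)_{n\geq N}$ is straight by verifying condition (2) of Lemma~\ref{lemma:straight_equival}, i.e. that $\Gamma'\subseteq \bigcap_{\xi\in\Xi(\eta)}Q(x_N,\xi)$. Fix $\xi\in\Xi(\eta)$ and $n\geq N$. By Lemma~\ref{lemma:existence_special_vertices}(4) (applied with the index $N_\xi\leq N$, after noting that the stabilization from $N_\xi$ onwards gives the same conclusion from $N$ onwards; more directly one reapplies part (4) with $N$ in place of the stabilization index since $\dist_{x,\xi}$ is already constant on $[N,\infty)$), for any $\Gamma''=(y_m)_{m\in\NN}\in\CGR(y_0,\eta)$ converging to $\xi$ there is some $M$ with $\dc(x_N,x_n)+\dc(x_n,y_m)=\dc(x_N,y_m)$ for all $m\geq M$. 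This says precisely that $x_n\in\Gamma(x_N,y_m)$ for all large $m$, hence $x_n\in\bigcup_{m}\Gamma(x_N,y_m)=Q(x_N,\xi)$ by Lemma~\ref{lemma:alt_desc_Qxxi}. As $\xi$ and $n\geq N$ were arbitrary, $\Gamma'\subseteq\bigcap_{\xi\in\Xi(\eta)}Q(x_N,\xi)$, so $\Gamma'$ is straight by Lemma~\ref{lemma:straight_equival}.

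I expect the main subtlety to be a bookkeeping point rather than a deep obstacle: Lemma~\ref{lemma:existence_special_vertices}(4) is phrased relative to ``$N$ as in (3)'', i.e. relative to the stabilization index for the \emph{particular} $\xi$, whereas I want a single $N$ working for all $\xi$ simultaneously. The fix is that once $\dist_{x,\xi}(x_n)$ is constant for $n\geq N_\xi$, it is in particular constant for $n\geq N\geq N_\xi$, and the proof of part (4) only uses this constancy on the relevant range; so the conclusion of (4) holds verbatim with $N$ in place of $N_\xi$. One should state this explicitly (or simply invoke (4) for each $\xi$ with its own index and then observe $Q(x_{N_\xi},\xi)\supseteq Q(x_N,\xi)$ via Lemma~\ref{lemma:Qx_in_Qy}, since $x_N\in Q(x_{N_\xi},\xi)$), and otherwise the argument is routine. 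Finally, since $\Gamma'=(x_n)_{n\geq N}$ is straight, we are done.
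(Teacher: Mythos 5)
Your proof is correct and follows essentially the paper's own argument: the paper likewise fixes $N_\xi$ via Lemma~\ref{lemma:existence_special_vertices}(3) for each of the finitely many $\xi\in\Xi(\eta)$, sets $N:=\max_\xi N_\xi$ (which is automatically ``an $N$ as in (3)'' for every $\xi$, so your bookkeeping worry indeed evaporates), and deduces straightness of $(x_n)_{n\geq N}$ from Lemma~\ref{lemma:existence_special_vertices}(4) together with Lemma~\ref{lemma:straight_equival}, using criterion (3) where you use the equivalent criterion (2) --- an immaterial difference. The only small imprecision is your appeal to Lemma~\ref{lemma:alt_desc_Qxxi}: that lemma computes $Q(x_N,\xi)$ from a CGR \emph{starting at} $x_N$ and converging to $\xi$, so you should choose $(y_m)$ with $y_0=x_N$ (such a CGR exists by base-point independence of $\Xi(\eta)$), after which your argument goes through verbatim.
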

\begin{proof}
Assume that $\Xi(\eta)$ is finite. For each $\xi\in\Xi(\eta)$, let $N_{\xi}\in\NN$ be such that $\dist_{x,\xi}(x_n)=\dist_{x,\xi}(x_{N_{\xi}})$ for all $n\geq N_{\xi}$ (see Lemma~\ref{lemma:existence_special_vertices}(3)). Set $N:=\max\{N_{\xi} \ | \ \xi\in\Xi(\eta)\}$. Then $\Gamma_N:=(x_n)_{n\geq N}$ is straight, as follows from Lemma~\ref{lemma:straight_equival}(1)$\Leftrightarrow$(3) and Lemma~\ref{lemma:existence_special_vertices}(4).
\end{proof}


\section{Consequences of hyperbolicity}
Throughout this section, we fix some $\eta\in\partial X$, and we assume that $X$ is hyperbolic.

\begin{lemma}\label{lemma:hyperbolic_subcv_Gamma}
Let $x\in X^{(0)}$, and let $(\Gamma_n)_{n\in\NN}$ be a sequence of CGR from $x$ to $\eta$. Then $(\Gamma_n)_{n\in\NN}$ subconverges to some $\Gamma\in\CGR(x,\eta)$.
\end{lemma}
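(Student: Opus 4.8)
The plan is to use a standard diagonal-extraction argument, exploiting local finiteness of $X$ to pass to a subsequence that stabilizes on every ball around $x$, and then to check that the resulting limiting path is indeed a CGR converging to $\eta$.

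First I would write $\Gamma_n = (x_m^{(n)})_{m\in\NN}$ with $x_0^{(n)} = x$ for all $n$. For each fixed $m$, the vertex $x_m^{(n)}$ lies in the ball of radius $m$ around $x$, which is finite since $X$ is locally finite; hence there are only finitely many possible values of $x_m^{(n)}$ as $n$ varies. A diagonal argument then produces a subsequence $(\Gamma_{n_k})_{k\in\NN}$ and vertices $x_m \in X^{(0)}$ such that for every $m$, we have $x_m^{(n_k)} = x_m$ for all sufficiently large $k$. Set $\Gamma := (x_m)_{m\in\NN}$. Since consecutive vertices $x_m, x_{m+1}$ agree with $x_m^{(n_k)}, x_{m+1}^{(n_k)}$ for large $k$, they are adjacent; and for any $m < m'$, taking $k$ large enough that both $x_m^{(n_k)} = x_m$ and $x_{m'}^{(n_k)} = x_{m'}$, we get $\dc(x_m, x_{m'}) = \dc(x_m^{(n_k)}, x_{m'}^{(n_k)}) = m' - m$ because $\Gamma_{n_k}$ is a geodesic. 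So $\Gamma$ is a geodesic ray starting at $x$.

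It remains to see that $\Gamma$ points towards $\eta$, i.e. that $\Gamma \in \CGR(x,\eta)$. Here I would invoke the definition of the topology on $\partial X$: the constant sequence $\eta_k := \eta$ trivially converges to $\eta$, and it is witnessed by the CGR $\Gamma_{n_k} \in \CGR(x,\eta_k) = \CGR(x,\eta)$; since $\Gamma_{n_k}$ subconverges (in fact converges, after our extraction) to $\Gamma$, and any subsequence of $(\Gamma_{n_k})$ further subconverges to $\Gamma$ by the same diagonal stabilization, the definition of convergence in $\partial X$ forces $\Gamma \in \CGR(x,\eta)$. Alternatively, and more hands-on, one can argue via Lemma~\ref{lemma:hyperbolic_basic_prop}: fixing any $\Gamma' = (x_m')_{m\in\NN} \in \CGR(x,\eta)$, for each $m$ and each large $k$ we have $\dc(x_m, x_m^{(n_k)}) = 0$ and $\dc(x_m^{(n_k)}, x_m') \leq 2\delta$ since $\Gamma_{n_k}, \Gamma' \in \CGR(x,\eta)$; hence $\dc(x_m, x_m') \leq 2\delta$ for all $m$, so $\Gamma$ is at bounded Hausdorff distance from $\Gamma'$ and therefore asymptotic to it, giving $\Gamma \in \CGR(x,\eta)$.

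The argument is essentially routine; the only point requiring a little care is making the diagonalization precise (ensuring the chosen subsequence genuinely stabilizes at every level $m$ simultaneously), and correctly citing the definition of the boundary topology to conclude that the limit ray still converges to $\eta$ — this latter step is where one must be slightly attentive, though the hyperbolicity-based alternative via Lemma~\ref{lemma:hyperbolic_basic_prop} sidesteps any subtlety. I do not anticipate a genuine obstacle here.
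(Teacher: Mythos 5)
Your proof is correct and is essentially the paper's argument: local finiteness of $X$ gives subconvergence (your diagonal extraction makes this explicit) to a geodesic ray $\Gamma$ from $x$, and Lemma~\ref{lemma:hyperbolic_basic_prop} applied to a reference CGR (the paper uses $\Gamma_0$ itself) shows $\Gamma$ stays within $2\delta$ of it, hence is asymptotic to it and lies in $\CGR(x,\eta)$. One caveat: your first route via the definition of the boundary topology does not quite work as stated, since that definition only asserts the \emph{existence} of some witnessing CGRs for a convergent sequence and does not say that every pointwise limit of CGRs toward $\eta$ again points toward $\eta$ — so the hyperbolicity argument you give as the alternative is the one to keep.
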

\begin{proof}
Since $X$ is locally finite, $(\Gamma_n)_{n\in\NN}$ subconverges to some CGR $\Gamma$ from $x$. Moreover, $\Gamma$ is contained in a tubular neighbourhood of $\Gamma_0$ by Lemma~\ref{lemma:hyperbolic_basic_prop}, as desired.
\end{proof}

$X$ is called {\bf uniformly locally finite} if there exists a constant $R$ such that each $x\in X^{(0)}$ is contained in at most $R$ edges.

\begin{prop}\label{prop:locfin_hyperb_implies_finite}
Assume that $X$ is uniformly locally finite. Then $\Xi(\eta)$ is finite. Moreover, there is a constant $R\in\NN$ independent of $\eta$ such that $\Xi(\eta)$ has at most $R$ elements.
\end{prop}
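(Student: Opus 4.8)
The plan is to show that the elements of $\Xi(\eta)$ are in bijective (or at least injective) correspondence with a set of vertices lying at bounded distance from any fixed CGR to $\eta$, and then use uniform local finiteness to bound the size of that set. Fix a CGR $\Gamma=(x_n)_{n\in\NN}\in\CGR(z_0,\eta)$ and recall from Lemma~\ref{lemma:hyperbolic_basic_prop} that every CGR from $z_0$ to $\eta$ stays within the $2\delta$-neighbourhood of $\Gamma$. First I would show that each $\xi\in\Xi(\eta)$ is determined by the "column" that its defining CGR occupies at a sufficiently late stage. Concretely, for $\xi\in\Xi(\eta)$ pick $\Gamma^{\xi}=(x_n^{\xi})_{n\in\NN}\in\CGR(z_0,\eta)$ converging to $\xi$; by Lemma~\ref{lemma:existence_special_vertices}(2) applied with $\Gamma$ and $\Gamma^\xi$ (which is itself a CGR converging to $\xi$), the quantity $\dist_{z_0,\xi}(x_n)$ is bounded by $2\dist_H(\Gamma,\Gamma^\xi)\le 4\delta$, so it stabilises (Lemma~\ref{lemma:existence_special_vertices}(3)) to some value $\le 4\delta$. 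The key step is then to prove that $\xi$ is recovered from the stabilised data: using Lemma~\ref{lemma:existence_special_vertices}(4), for $n$ large the vertex $x_n$ lies on a CGR through $x_m^{\xi}$ for all large $m$, which forces $\xi(z)=\lim_m f_{x_m^\xi}(z)$ to agree with a function computed from $x_n$ and $\Gamma$; hence $\xi$ is determined by the vertex $x_n^\xi$ for any single large $n$, and in fact by $\dc(x_n,x_n^\xi)$-type data.

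More precisely, the cleanest route is: choose $N$ large enough that $\dist_{z_0,\xi}(x_n)$ has stabilised for \emph{every} $\xi\in\Xi(\eta)$ simultaneously — but since we do not yet know $\Xi(\eta)$ is finite, instead argue uniformly. For each $\xi$ and each $n$, the set of possible vertices $x_n^\xi$ occurring on a CGR from $z_0$ to $\eta$ converging to $\xi$ lies in the ball $B(x_n,2\delta)$ around $x_n$, a set of size at most $N(R,\delta):=\sum_{i=0}^{2\delta}R(R-1)^{i-1}$ (bounded purely in terms of $R$ and $\delta$) by uniform local finiteness. I would then show the assignment $\xi\mapsto (x_n^\xi)_{n}$, read modulo the asymptotic equivalence of such sequences, is injective: if $\Gamma^{\xi}$ and $\Gamma^{\xi'}$ stay within bounded distance of each other and of $\Gamma$, a limiting/diagonal argument together with Lemma~\ref{lemma:uniqueness_xi_Q} (or directly the definition of $\xi_\Gamma$ via Lemma~\ref{lemma:existence}) shows $\xi=\xi'$ once they share the same column infinitely often. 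Since every CGR from $z_0$ to $\eta$ occupies, at stage $n$, one of at most $N(R,\delta)$ vertices of $B(x_n,2\delta)$, and two CGR occupying the same vertex at infinitely many stages determine the same horofunction limit along a common subsequence (again Lemma~\ref{lemma:hyperbolic_basic_prop} plus a diagonal extraction via Lemma~\ref{lemma:hyperbolic_subcv_Gamma}), we conclude $|\Xi(\eta)|\le N(R,\delta)=:R'$, a bound independent of $\eta$.

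The main obstacle I anticipate is the injectivity step: two distinct $\xi,\xi'\in\Xi(\eta)$ might be realised by CGR that repeatedly "braid" past each other within the $2\delta$-neighbourhood of $\Gamma$, visiting the same vertices at many (even infinitely many) indices without being equal, so one cannot naively index $\Xi(\eta)$ by "the vertex at stage $n$". The resolution is to work with the \emph{stabilised} value of $\dist_{z_0,\xi}$ and the associated eventual behaviour: by Lemma~\ref{lemma:existence_special_vertices}(4), once $\dist_{z_0,\xi}(x_n)$ has stabilised at $x_N$, the vertex $x_N$ is \emph{uniformly} on geodesics towards $\Gamma^\xi$, which pins down $\xi$ as $\lim_n f_{x_n}$ along the \emph{correct} reorientation of $\Gamma$; so the genuine invariant attached to $\xi$ is the pair (stabilised value, column of $x_N$ in $B(x_N,2\delta)$), and I must check this pair cannot be shared by two different $\xi$. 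That check uses Lemma~\ref{lemma:reorientate_CGR} to reorient both $\Gamma^\xi$ and $\Gamma^{\xi'}$ through the common vertex and then Lemma~\ref{lemma:existence}/the pointwise-convergence definition of the horofunction limit to force the limits to coincide. Counting the finitely many such pairs then gives the uniform bound $R$.
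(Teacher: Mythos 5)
Your ``cleanest route'' (your second paragraph) is correct, but it is genuinely different from, and more elementary than, the paper's argument, so let me compare. The paper argues by contradiction with a finite subset $\{\xi_0,\dots,\xi_R\}\subseteq\Xi(\eta)$, where $R$ bounds the size of $2\delta$-balls: it uses Lemma~\ref{lemma:existence_special_vertices}(3) to stabilise all the functions $\dist_{x,\xi'}$ along all the chosen rays $\Gamma_\xi$ simultaneously (this is where finiteness of the chosen subset enters), and then shows that at the \emph{single} stage $M=N+2\delta$ the vertices $x^\xi_M$ are pairwise distinct, via Lemma~\ref{lemma:existence_special_vertices}(4) combined with Lemmas~\ref{lemma:pre-min_stripe} and \ref{lemma:uniqueness_xi_Q} (a common vertex forces equal sectors, hence equal horofunctions); these $R+1$ distinct vertices lie in one $2\delta$-ball by Lemma~\ref{lemma:hyperbolic_basic_prop}, a contradiction. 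Your route replaces this single-stage separation by an ``infinitely many stages'' separation: all representatives in $\CGR(z_0,\eta)$ stay $2\delta$-close to a fixed ray at each index (Lemma~\ref{lemma:hyperbolic_basic_prop}), and if two such rays agree at infinitely many indices then their horofunction limits agree --- for this you need no diagonal extraction via Lemma~\ref{lemma:hyperbolic_subcv_Gamma}, only Lemma~\ref{lemma:existence}: $\xi_\Gamma(z)$ is the (eventually constant) limit of $f_{\Gamma_n}(z)$, so agreement along an infinite set of indices forces equal limits. Then, given $K+1$ pairwise distinct elements of $\Xi(\eta)$ (with $K$ a uniform bound on the size of $2\delta$-balls), each pair of representatives agrees at only finitely many indices, so beyond the maximum of these finitely many thresholds you get $K+1$ pairwise distinct vertices inside a single $2\delta$-ball, a contradiction; you should spell out this pigeonhole explicitly, but it is routine. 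Your approach buys brevity, avoiding the sector machinery (Lemmas~\ref{lemma:pre-min_stripe}, \ref{lemma:uniqueness_xi_Q}, \ref{lemma:existence_special_vertices}) entirely; the paper's buys the stronger conclusion that one sufficiently late vertex already separates the chosen horofunctions. Two corrections to your write-up: the worry in your last paragraph --- that distinct $\xi,\xi'$ might be realised by rays sharing infinitely many vertices --- cannot occur, precisely by the pointwise-convergence observation above, so your retreat to the ``stabilised invariant'' (essentially the paper's proof) is unnecessary; and the claim in your first paragraph that $\xi$ is determined by the single vertex $x^\xi_n$ for ``any single large $n$'' is unjustified as stated (how large ``large'' is depends on $\xi$), which is exactly the issue the paper resolves by stabilising uniformly over a finite subset inside a proof by contradiction.
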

\begin{proof}
Let $\delta>0$ be such that $X$ is $\delta$-hyperbolic, and let $R\in\NN$ be such that each ball of radius $2\delta$ in $X^{(0)}$ contains at most $R$ vertices. We claim that $\Xi(\eta)$ has at most $R$ elements. Indeed, assume for a contradiction that there is some subset $\Xi=\{\xi_0,\xi_1,\dots,\xi_{R}\}\subseteq\Xi(\eta)$ with $R+1$ elements. Let $x\in X^{(0)}$. For each $\xi\in\Xi$, let $\Gamma_{\xi}=(x^{\xi}_n)_{n\in\NN}\in\CGR(x,\eta)$ converge to $\xi$ (see Figure~\ref{figure:Prop4-3}). By Lemma~\ref{lemma:existence_special_vertices}(3), there is some $N\in\NN$ such that 
\begin{equation}\label{eqn:xixi'}
\dist_{x,\xi'}(x^{\xi}_n)=\dist_{x,\xi'}(x^{\xi}_{N})\quad\textrm{for all $\xi,\xi'\in\Xi$ and all $n\geq N$.}
\end{equation}
Set $M:=N+2\delta$. Then by Lemma~\ref{lemma:hyperbolic_basic_prop}, the ball $B(x^{\xi_0}_M,2\delta)\subseteq X^{(0)}$ centered at $x^{\xi_0}_M$ and of radius $2\delta$ contains the vertex $x^{\xi}_{M}$ of $\Gamma_{\xi}$  for each $\xi\in\Xi$. 
We claim that the vertices $x^{\xi}_{M}$ for $\xi\in\Xi$ are pairwise distinct, yielding the desired contradiction. Assume that $x^{\xi}_{M}=x^{\xi'}_{M}=:y$ for some $\xi,\xi'\in\Xi$, and let us show that $\xi=\xi'$. By Lemma~\ref{lemma:uniqueness_xi_Q}, it is sufficient to show that $Q(y,\xi)=Q(y,\xi')$. By Lemma~\ref{lemma:pre-min_stripe}, it is then sufficient to show that $(x^{\xi}_{n})_{n\geq M}\subseteq Q(y,\xi')$ (the converse inclusion $Q(y,\xi)\supseteq Q(y,\xi')$ being obtained by exchanging the roles of $\xi$ and $\xi'$). Let thus $n\geq M\geq N$. In view of (\ref{eqn:xixi'}), we can apply Lemma~\ref{lemma:existence_special_vertices}(4) (with $x:=x$, $\xi:=\xi'$, $\Gamma:=\Gamma_{\xi}$ and $\Gamma':=\Gamma_{\xi'}$) to conclude that there exists some $M'\in\NN$ such that $\dc(y,x^{\xi}_n)+\dc(x^{\xi}_n,x^{\xi'}_m)=\dc(y,x^{\xi'}_m)$ for all $m\geq M'$. Thus, $x^{\xi}_n\in Q(y,\xi')$, as desired.
\end{proof}

\begin{figure}
\centering
  \includegraphics[trim = 54mm 160mm 22mm 50mm, clip, width=10cm]{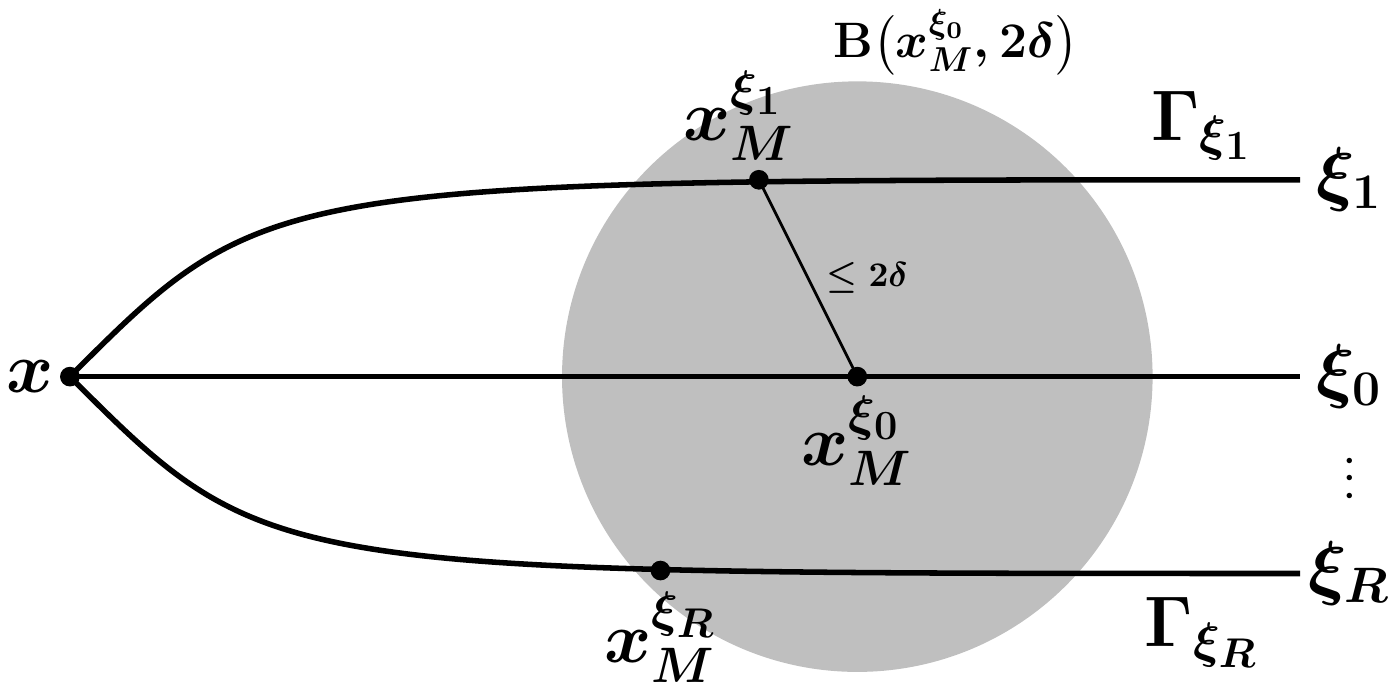}
  \captionof{figure}{Proposition~\ref{prop:locfin_hyperb_implies_finite}}
  \label{figure:Prop4-3}
\end{figure}

\begin{corollary}\label{corollary:special_exist}
Assume that $X$ is uniformly locally finite. Let $\Gamma=(x_n)_{n\in\NN}\in\CGR(x_0,\eta)$. Then there is some $N\in\NN$ such that $(x_n)_{n\geq N}$ is a straight CGR. In particular, $\Gamma\setminus\Xs$ is finite.
\end{corollary}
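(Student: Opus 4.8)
The plan is to combine the finiteness of $\Xi(\eta)$ with the ``straightening'' lemma already at our disposal. Since we are in the section where $X$ is assumed hyperbolic, and $X$ is moreover assumed uniformly locally finite, Proposition~\ref{prop:locfin_hyperb_implies_finite} applies and gives that $\Xi(\eta)$ is finite. Feeding this into Lemma~\ref{lemma:substraight} (applied to $x:=x_0$ and the given CGR $\Gamma=(x_n)_{n\in\NN}$) immediately produces some $N\in\NN$ such that the sub-CGR $(x_n)_{n\geq N}$ is straight, which is the first assertion.

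For the ``in particular'' part, I would argue as follows. Once $(x_n)_{n\geq N}$ is straight, Lemma~\ref{lemma:straight_equival}(1)$\Leftrightarrow$(4) shows that every sub-CGR of it, namely $(x_m)_{m\geq n}$ for each $n\geq N$, is again straight. Hence, for each $n\geq N$, there exists a straight CGR from $x_n$ to $\eta$, so $x_n$ is $\eta$-special by Lemma~\ref{lemma:equivalence_special}. Therefore $\Gamma\setminus\Xs\subseteq\{x_0,x_1,\dots,x_{N-1}\}$, which is finite.

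There is essentially no obstacle here: the statement is a formal consequence of Proposition~\ref{prop:locfin_hyperb_implies_finite}, Lemma~\ref{lemma:substraight}, Lemma~\ref{lemma:straight_equival} and Lemma~\ref{lemma:equivalence_special}, all proved earlier. The only point requiring a moment's care is making sure the hyperbolicity hypothesis needed by Proposition~\ref{prop:locfin_hyperb_implies_finite} is indeed in force — it is, since it is a standing assumption of the section — and that ``straight'' passes to sub-CGR, which is exactly Lemma~\ref{lemma:straight_equival}(4).
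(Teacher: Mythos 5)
Your proof is correct and follows essentially the same route as the paper: Proposition~\ref{prop:locfin_hyperb_implies_finite} gives finiteness of $\Xi(\eta)$, Lemma~\ref{lemma:substraight} produces the straight tail, and Lemma~\ref{lemma:equivalence_special} yields the ``in particular'' claim. Your explicit invocation of Lemma~\ref{lemma:straight_equival}(1)$\Leftrightarrow$(4) merely spells out a step the paper leaves implicit.
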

\begin{proof}
By Proposition~\ref{prop:locfin_hyperb_implies_finite}, the set $\Xi(\eta)$ is finite. Hence Lemma~\ref{lemma:substraight} yields some $N\in\NN$ such that $(x_n)_{n\geq N}$ is straight. The second claim then follows from Lemma~\ref{lemma:equivalence_special}. 
\end{proof}

\begin{lemma}\label{lemma:first_step_thm}
Let $x\in \Xs$ and $y\in X^{(0)}$. Then $Q(x,\xi_{x,\eta})\setminus  Q(y,\xi_{x,\eta})$ is finite.
\end{lemma}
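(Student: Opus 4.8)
Write $\xi:=\xi_{x,\eta}\in\Xi(\eta)$. The plan is to argue by contradiction, controlling things via the function $\dist_{y,\xi}\co X^{(0)}\to\NN,\ a\mapsto\dc(y,a)+\xi(a)-\xi(y)$ (nonnegativity and integrality hold exactly as for $\dist_{x,\xi}$, since $\xi$ is $1$-Lipschitz and $\xi(a)-\xi(y)$ is a limit of the integers $\dc(x_n,a)-\dc(x_n,y)$). The first step is a membership criterion: \emph{if $\dist_{y,\xi}(w)=0$, then $w\in Q(y,\xi)$.} To see this, note that since $\Xi(\eta)$ is independent of the base point and $\xi\in\Xi(\eta)$, there is a CGR $(w_k)_{k\in\NN}$ from $w$ converging to $\xi$; I would show that $\Gamma_{yw}\cdot(w_k)_{k\in\NN}$ is a CGR for any geodesic path $\Gamma_{yw}$ from $y$ to $w$, which (by Lemma~\ref{lemma:existence}) then converges to $\xi$ and passes through $w$, giving $w\in Q(y,\xi)$. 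For this, set $h(k):=\dc(y,w)+k-\dc(y,w_k)\geq0$; it is non-decreasing because $w_k,w_{k+1}$ are adjacent, and for $k$ large $\dc(y,w_k)-k=\dc(w_k,y)-\dc(w_k,w)$ equals its (integer, hence eventually attained) limit $\xi(y)-\xi(w)$, so $h(k)=\dist_{y,\xi}(w)=0$ for $k$ large; a non-decreasing $\NN$-valued sequence eventually $0$ is identically $0$. In particular $w\notin Q(y,\xi)$ forces $\dist_{y,\xi}(w)>0$.

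Next, I would assume for contradiction that $Q(x,\xi)\setminus Q(y,\xi)$ is infinite. Since $X$ is locally finite, I can pick $z_i\in Q(x,\xi)\setminus Q(y,\xi)$ with $\dc(x,z_i)\to\infty$. Each $z_i$ lies on a CGR $\Gamma_i\in\CGR(x,\eta)$ converging to $\xi$, and since $Q(x,\xi)=Q(x,\xi_{x,\eta})=\bigcap_{\xi'\in\Xi(\eta)}Q(x,\xi')$ by Definition~\ref{definition:special}, all vertices of $\Gamma_i$ lie in this intersection, so $\Gamma_i$ is straight by Lemma~\ref{lemma:straight_equival}(2)$\Rightarrow$(1); writing $\Gamma_i(n)$ for its $n$-th vertex, $z_i=\Gamma_i(n_i)$ with $n_i=\dc(x,z_i)\to\infty$. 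By Lemma~\ref{lemma:hyperbolic_subcv_Gamma}, after passing to a subsequence the $\Gamma_i$ subconverge to some $\Gamma_\infty\in\CGR(x,\eta)$; for each $n$, $\Gamma_\infty(n)=\Gamma_i(n)$ for all large $i$, so $\Gamma_\infty(n)\in Q(x,\xi)$, whence $\Gamma_\infty$ is contained in $\bigcap_{\xi'}Q(x,\xi')$, hence straight (Lemma~\ref{lemma:straight_equival}) and converging to $\xi_{x,\eta}=\xi$ (Lemma~\ref{lemma:equivalence_special}).

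Along any CGR $\Gamma\in\CGR(x,\eta)$ converging to $\xi$ one has $\xi(\Gamma(n))=\xi(x)-n$ (equivalently $\dist_{x,\xi}(\Gamma(n))=0$, e.g. by Lemma~\ref{lemma:existence_special_vertices}(2) with $\Gamma'=\Gamma$, or directly from $\Gamma(n)\to\xi$ in $\Chor(X)$), so $\dist_{y,\xi}(\Gamma(n+1))-\dist_{y,\xi}(\Gamma(n))=\dc(y,\Gamma(n+1))-\dc(y,\Gamma(n))-1\leq0$: thus $\dist_{y,\xi}$ is non-increasing along $\Gamma$. Moreover $\dist_{y,\xi}(\Gamma(n))=[\dc(y,\Gamma(n))-\dc(x,\Gamma(n))]+\xi(x)-\xi(y)\to0$, since $\Gamma(n)\to\xi$ in $\Chor(X)$ yields $\dc(y,\Gamma(n))-\dc(x,\Gamma(n))\to\xi(y)-\xi(x)$. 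Applied to $\Gamma=\Gamma_\infty$ this gives $N_0\in\NN$ with $\dist_{y,\xi}(\Gamma_\infty(n))=0$ for all $n\geq N_0$. Now for each $i$ pick $m_i\leq n_i$ with $\Gamma_i(m_i)=\Gamma_\infty(m_i)$ and $m_i\to\infty$ (the minimum of $n_i$ and the length of the common prefix of $\Gamma_i$ and $\Gamma_\infty$). Using the monotonicity of $\dist_{y,\xi}$ along the straight CGR $\Gamma_i$ and then the membership criterion,
$$\dist_{y,\xi}\big(\Gamma_\infty(m_i)\big)=\dist_{y,\xi}\big(\Gamma_i(m_i)\big)\geq\dist_{y,\xi}\big(\Gamma_i(n_i)\big)=\dist_{y,\xi}(z_i)>0,$$
contradicting $\dist_{y,\xi}(\Gamma_\infty(m_i))=0$ for $i$ large (as $m_i\geq N_0$). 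Hence $Q(x,\xi)\setminus Q(y,\xi)$ is finite.

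The main obstacle is that there is no obvious a priori bound on how far from $x$ the ``bad set'' $Q(x,\xi)\setminus Q(y,\xi)$ reaches: the hyperbolic fellow-travelling estimates (Lemma~\ref{lemma:hyperbolic_basic_prop}) only place $Q(x,\xi)$ within a bounded neighbourhood of a single straight ray, which is not by itself enough to see that $\dist_{y,\xi}$ vanishes \emph{exactly} on all far-out vertices. The device that bypasses this is to transport each obstruction $z_i$, along its own straight ray $\Gamma_i$ (on which $\dist_{y,\xi}$ is monotone), back onto the \emph{single} limit ray $\Gamma_\infty$ supplied by Lemma~\ref{lemma:hyperbolic_subcv_Gamma}; on that fixed ray the monotone $\NN$-valued quantity $\dist_{y,\xi}$ must be eventually $0$, which is the desired contradiction.
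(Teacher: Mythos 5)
Your proof is correct. Its first half coincides with the paper's: assume the difference is infinite, put each bad point on a CGR from $x$ converging to $\xi:=\xi_{x,\eta}$, use Lemma~\ref{lemma:hyperbolic_subcv_Gamma} to extract a limit ray $\Gamma_\infty\subseteq Q(x,\xi)$, and identify $\xi_{\Gamma_\infty}=\xi$ (you do this via straightness and Lemma~\ref{lemma:equivalence_special}; the paper does it via Lemmas~\ref{lemma:pre-min_stripe} and \ref{lemma:uniqueness_xi_Q} --- the two are interchangeable here). Where you genuinely diverge is the finishing mechanism. The paper stays sector-theoretic: a far-out vertex $y_N$ of $\Gamma_\infty$ lies in $Q(y,\xi)$ by Lemma~\ref{lemma:preparation}, and then the nesting $Q(y_N,\xi)\subseteq Q(y,\xi)$ (Lemma~\ref{lemma:Qx_in_Qy}) forces a bad point $x_M$ into $Q(y,\xi)$, a direct contradiction. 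You instead introduce the defect function $\dist_{y,\xi}$ (the paper's $\dist_{x,\xi}$ rebased at $y$), prove from scratch a membership criterion ($\dist_{y,\xi}(w)=0\Rightarrow w\in Q(y,\xi)$, via the monotone non-negative integer quantity $h(k)$), show $\dist_{y,\xi}$ is non-increasing along any CGR converging to $\xi$ and tends to $0$ along $\Gamma_\infty$, and then pull the strict positivity at the bad points $z_i$ backwards along their rays $\Gamma_i$ to the common prefix with $\Gamma_\infty$, where the defect already vanishes. All the individual steps check out (in particular $\xi(\Gamma(n+1))-\xi(\Gamma(n))=-1$ along rays converging to $\xi$, the eventual attainment of integer limits in $\Chor(X)$, and the existence of a CGR from $w$ converging to $\xi$ by base-point independence of $\Xi(\eta)$); the noted straightness of the $\Gamma_i$ is not actually needed, only their convergence to $\xi$. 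What your route buys is a quantitative, largely self-contained replacement for the sector-containment lemmas (\ref{lemma:Qx_in_Qy}, \ref{lemma:reorientate_CGR}): the single Busemann-type invariant $\dist_{y,\xi}$ detects membership in $Q(y,\xi)$ and is transported monotonically, whereas the paper's argument is shorter given the lemmas it has already established.
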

\begin{proof}
Assume for a contradiction that there exists an (unbounded) sequence $(x_n)_{n\in\NN}\subseteq Q(x,\xi_{x,\eta})\setminus  Q(y,\xi_{x,\eta})$. In particular, $x_n\in\Xs$ and $\xi_{x_n,\eta}=\xi_{x,\eta}$ for all $n\in\NN$ by Lemma~\ref{lemma:left_inclusion}. For each $n\in\NN$, let $\Gamma_n\in\CGR(x,\eta)$ converge to $\xi_{x,\eta}$ and passing through $x_n$ (see Figure~\ref{figure:Lem4-5}). By Lemma~\ref{lemma:hyperbolic_subcv_Gamma}, we may assume, up to extracting a subsequence, that $(\Gamma_n)_{n\in\NN}$ converges to a CGR $\Gamma=(y_n)_{n\in\NN}$ from $x$ to $\eta$. Note that $\Gamma\subseteq Q(x,\xi_{x,\eta})$, so that $Q(x,\xi_{\Gamma})\subseteq Q(x,\xi_{x,\eta})$ by Lemma~\ref{lemma:pre-min_stripe}. Hence $Q(x,\xi_{\Gamma})= Q(x,\xi_{x,\eta})$ by definition of $\xi_{x,\eta}$, and $\xi_{\Gamma}=\xi_{x,\eta}$ by Lemma~\ref{lemma:uniqueness_xi_Q}. Let $N\in\NN$ be such that $y_N\in Q(y,\xi_{\Gamma})$ (see Lemma~\ref{lemma:preparation}), and let $M\in\NN$ be such that $y_N\in\Gamma_M\cap\Gamma(x,x_M)$. Thus $x_M\in Q(y_N,\xi_{x,\eta})$. On the other hand, Lemma~\ref{lemma:Qx_in_Qy} yields $Q(y_N,\xi_{\Gamma})\subseteq Q(y,\xi_{\Gamma})$. Therefore, $x_M\in Q(y,\xi_{x,\eta})$, yielding the desired contradiction.
\end{proof}

\begin{figure}
\centering
  \includegraphics[trim = 54mm 164mm 17mm 55mm, clip, width=10cm]{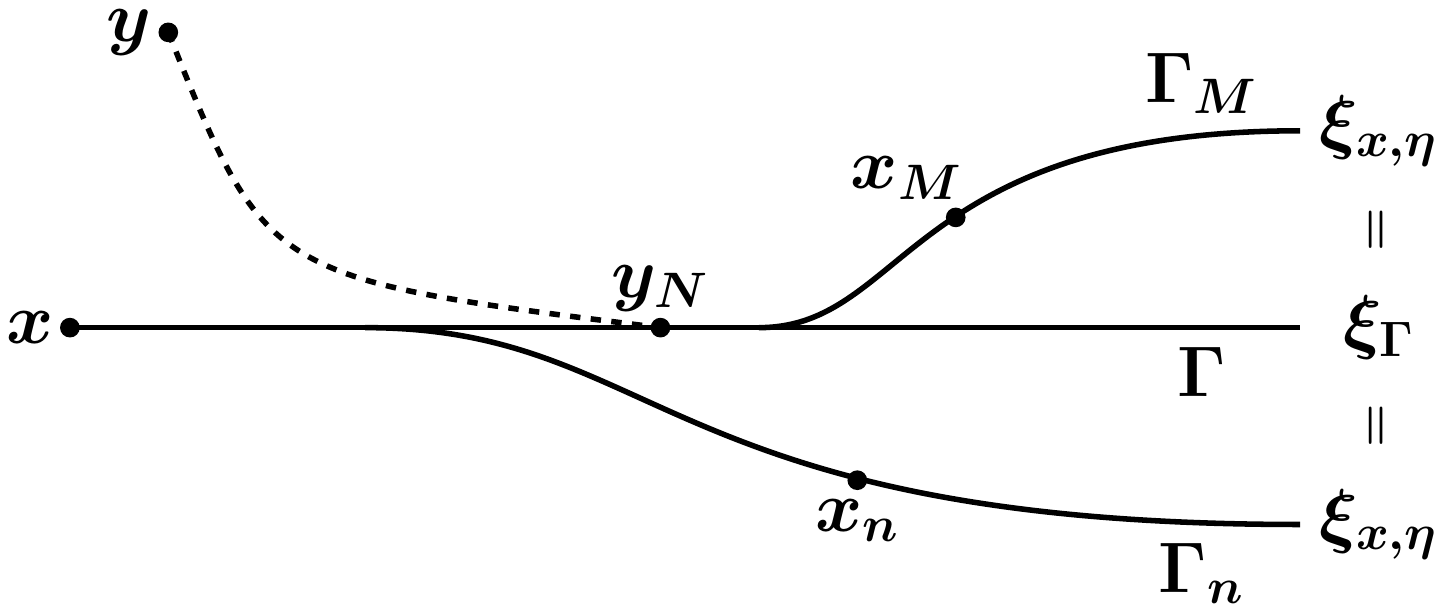}
  \captionof{figure}{Lemma~\ref{lemma:first_step_thm}}
  \label{figure:Lem4-5}
\end{figure}

\begin{definition}
Assume that $X$ is uniformly locally finite. Let $x\in X^{(0)}$. For $\xi\in\Xi(\eta)$, we let $Y(x,\xi)$ denote the set of $y\in \Geo(x,\eta)\cap\Xs$ with $\xi_{y,\eta}=\xi$ and such that $\dc(x,y)$ is minimal for these properties. Note that $Y(x,\xi)$ is finite (because $X$ is locally finite) and nonempty by Corollary~\ref{corollary:special_exist}. We set
$$\Geo_1(x,\eta):=\bigcup_{\xi\in\Xi(\eta)}\bigcup_{y\in Y(x,\xi)}Q(y,\xi).$$
\end{definition}

\begin{example}
In the context of Example~\ref{example:bad_ladder} (see Figure~\ref{figure:bad_ladder}), if $x\in\{x_m,y_m,z_m\}$  for some $m\geq 1$, then $\Geo_1(x,\eta)=\{x,x_n,y_n \ | \ n\geq m\}$.
\end{example}

\begin{example}\label{example:bad_ladder2}
Consider the graph $X$ depicted on Figure~\ref{figure:bad_ladder2}, with vertex set $X^{(0)}=\{x_n,y_n,z_n \ | \ n\geq 1\}$. As in Example~\ref{example:bad_ladder}, $\partial X$ has a unique element $\eta\in\partial X$, while $\Chb(X)=\Xi(\eta)$ has two elements $\xi_x,\xi_y$, respectively corresponding to the (limits of the) CGR $(x_n)_{n\geq 1}$ and $(y_n)_{n\geq 1}$. However, in this case, all vertices are $\eta$-special: if $m\geq 1$, then 
$(x_n)_{n\geq m}\in\CGR(x_m,\eta)$, $(y_n)_{n\geq m}\in\CGR(y_m,\eta)$ and $$ (z_m,y_{m+1})\cdot (y_n)_{n\geq m+1}\in\CGR(z_m,\eta)$$
are straight CGR. On the other hand, the sets $\Geo(x_1,\eta)=X^{(0)}\setminus\{y_1\}$ and $\Geo(y_1,\eta)=\{x_{n+1},y_n,z_{2n} \ | \ n\geq 1\}$ have an infinite symmetric difference. Note, however, that the sets $\Geo_1(x_1,\eta)=\{x_n,y_{n+1},z_1,z_{2n+2} \ | \ n\geq 1\}$ and $\Geo_1(y_1,\eta)=\{x_{n+1},y_n,z_{2n} \ | \ n\geq 1\}$ have finite symmetric difference.
\end{example}

\begin{prop}\label{prop:Geo1_big}
Assume that $X$ is uniformly locally finite. Let $x\in X^{(0)}$. Then $\Geo_1(x,\eta)\subseteq\Geo(x,\eta)\cap\Xs$, and for any $\Gamma\in\CGR(x,\eta)$, the set $\Gamma\setminus \Geo_1(x,\eta)$ is finite.
\end{prop}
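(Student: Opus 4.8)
The plan is to first verify the inclusion $\Geo_1(x,\eta)\subseteq\Geo(x,\eta)\cap\Xs$, and then establish the cofiniteness statement. For the inclusion: fix $\xi\in\Xi(\eta)$ and $y\in Y(x,\xi)$; by definition $y\in\Geo(x,\eta)\cap\Xs$ with $\xi_{y,\eta}=\xi$. Since $Q(y,\xi_{y,\eta})=\bigcap_{\xi'\in\Xi(\eta)}Q(y,\xi')$, Lemma~\ref{lemma:left_inclusion} gives $Q(y,\xi)\subseteq\Xs$, so we only need $Q(y,\xi)\subseteq\Geo(x,\eta)$. For this, pick any $z\in Q(y,\xi)$ and a CGR $\Gamma_{y\xi}$ from $y$ converging to $\xi=\xi_{y,\eta}$ through $z$; by Lemma~\ref{lemma:redicrect_xetaxi} (applied with the $\eta$-special vertex $y\in\Geo(x,\eta)$), the concatenation $\Gamma_{xy}\cdot\Gamma_{y\xi}$ of a geodesic $x\to y$ with $\Gamma_{y\xi}$ is a CGR from $x$ to $\eta$, and it passes through $z$, so $z\in\Geo(x,\eta)$.

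For the cofiniteness statement, let $\Gamma=(x_n)_{n\in\NN}\in\CGR(x,\eta)$. By Corollary~\ref{corollary:special_exist}, there is some $N_0$ such that $\Gamma_0:=(x_n)_{n\geq N_0}$ is straight; write $\xi_0:=\xi_{\Gamma_0}=\xi_{x_{N_0},\eta}$. Since $\Gamma_0$ is straight, $\Gamma_0\subseteq\bigcap_{\xi\in\Xi(\eta)}Q(x_{N_0},\xi)=Q(x_{N_0},\xi_0)$ by Lemma~\ref{lemma:straight_equival}(1)$\Leftrightarrow$(2). Now I want to compare $Q(x_{N_0},\xi_0)$ with $Q(y_0,\xi_0)$ for some $y_0\in Y(x,\xi_0)$. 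Since $x_{N_0}\in\Geo(x,\eta)\cap\Xs$ with $\xi_{x_{N_0},\eta}=\xi_0$, minimality of distance in the definition of $Y(x,\xi_0)$ gives $\dc(x,y_0)\leq\dc(x,x_{N_0})$; I would combine this with Lemma~\ref{lemma:first_step_thm} applied to the $\eta$-special vertex $x_{N_0}$ and the vertex $y_0$, yielding that $Q(x_{N_0},\xi_0)\setminus Q(y_0,\xi_0)$ is finite. Hence $\Gamma_0\setminus Q(y_0,\xi_0)$ is finite, so $\Gamma_0\setminus\Geo_1(x,\eta)$ is finite (as $Q(y_0,\xi_0)\subseteq\Geo_1(x,\eta)$ by construction), and therefore $\Gamma\setminus\Geo_1(x,\eta)\subseteq(\Gamma\setminus\Gamma_0)\cup(\Gamma_0\setminus\Geo_1(x,\eta))$ is finite.

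The main obstacle I anticipate is the direction of the inclusion $Q(x_{N_0},\xi_0)\setminus Q(y_0,\xi_0)$: Lemma~\ref{lemma:first_step_thm} is stated for an $\eta$-special first vertex and an arbitrary second vertex, so I must make sure the roles are assigned correctly, i.e. that $x_{N_0}$ plays the role of the special vertex $x$ in that lemma and $y_0$ plays the role of $y$. One subtlety is that $y_0$ need not lie in $Q(x_{N_0},\xi_0)$ a priori, but Lemma~\ref{lemma:first_step_thm} does not require this, so there is no circularity. A second minor point is to double-check that $x_{N_0}$ does have $\xi_{x_{N_0},\eta}=\xi_0$ and lies in $\Geo(x,\eta)$ — both are immediate since $x_{N_0}$ is on the CGR $\Gamma$ and the sub-CGR of $\Gamma$ starting at $x_{N_0}$ is straight, so Lemma~\ref{lemma:equivalence_special} applies. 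Everything else is bookkeeping with the set-theoretic identity $A\setminus C\subseteq(A\setminus B)\cup(B\setminus C)$ and the fact that finite unions of finite sets are finite (recalling $\Xi(\eta)$ is finite by Proposition~\ref{prop:locfin_hyperb_implies_finite} and each $Y(x,\xi)$ is finite).
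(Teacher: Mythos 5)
Your proposal is correct and follows essentially the same route as the paper: the inclusion via Lemmas~\ref{lemma:redicrect_xetaxi} and \ref{lemma:left_inclusion}, then Corollary~\ref{corollary:special_exist} to pass to a straight tail $(x_n)_{n\geq N_0}\subseteq Q(x_{N_0},\xi_0)$, and Lemma~\ref{lemma:first_step_thm} (with $x_{N_0}$ as the special vertex and $y_0\in Y(x,\xi_0)$ as the other vertex) to conclude. The side remark that $\dc(x,y_0)\leq\dc(x,x_{N_0})$ is not needed, as you yourself note, since Lemma~\ref{lemma:first_step_thm} requires nothing beyond $x_{N_0}\in\Xs$.
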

\begin{proof}
The inclusion $\Geo_1(x,\eta)\subseteq\Geo(x,\eta)$ readily follows from Lemma~\ref{lemma:redicrect_xetaxi}, and the inclusion $\Geo_1(x,\eta)\subseteq\Xs$ from Lemma~\ref{lemma:left_inclusion}. Let now $\Gamma=(x_n)_{n\in\NN}\in\CGR(x,\eta)$. By Corollary~\ref{corollary:special_exist}, there is some $N\in\NN$ such that $(x_n)_{n\geq N}$ is a straight CGR. Note that $x_N\in \Xs$ and that $\xi:=\xi_{\Gamma}=\xi_{x_N,\eta}$ by Lemma~\ref{lemma:equivalence_special}. Moreover, $(x_n)_{n\geq N}\subseteq Q(x_N,\xi)$. Let $y\in Y(x,\xi)$. Then $Q(x_N,\xi)\setminus  Q(y,\xi)$ is finite by Lemma~\ref{lemma:first_step_thm}, and hence $(x_n)_{n\geq N}\setminus \Geo_1(x,\eta)$ is finite, yielding the claim.
\end{proof}

\begin{figure}
  \centering
  \includegraphics[trim = 15mm 181mm 21mm 45mm, clip, width=10cm]{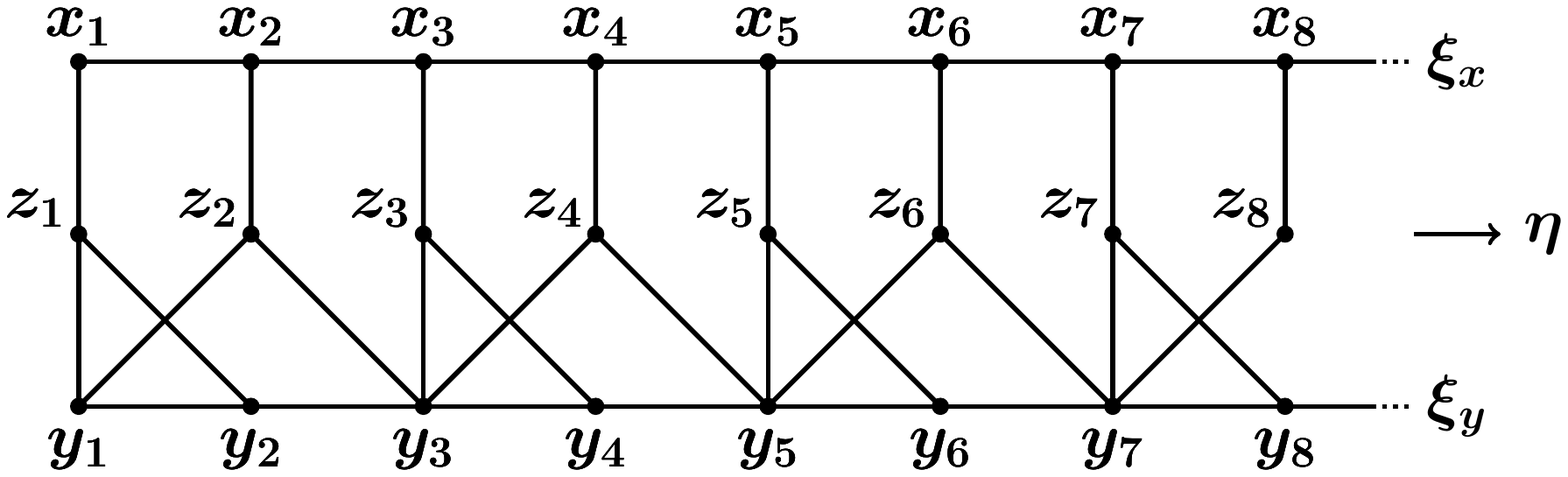}
  \captionof{figure}{Another ``bad ladder''}
  \label{figure:bad_ladder2}
\end{figure}

\begin{theorem}\label{theorem:FSD}
Assume that $X$ is hyperbolic and uniformly locally finite. Let $x,y\in X^{(0)}$. Then $\Geo_1(x,\eta)\setminus\Geo_1(y,\eta)$ is finite.
\end{theorem}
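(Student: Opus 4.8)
The plan is to reduce the claim to the single-sector statement of Lemma~\ref{lemma:first_step_thm}. First I would observe that $\Geo_1(y,\eta)$, while defined via the sets $Y(y,\xi)$, actually contains every combinatorial sector $Q(y',\xi)$ for $y'\in\Xs\cap\Geo(y,\eta)$ with $\xi_{y',\eta}=\xi$, up to a finite error: indeed, if $y'\in Y(y,\xi)$ is at minimal distance, then any other such $y''$ sits at bounded distance from $y$, hence on some CGR from $y$ that passes through $y''$, and by Lemma~\ref{lemma:first_step_thm} (applied at $y''$ and $y'$, noting $\xi_{y'',\eta}=\xi_{y',\eta}$ by Lemma~\ref{lemma:left_inclusion}) the difference $Q(y'',\xi)\setminus Q(y',\xi)$ is finite. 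So it suffices to cover $\Geo_1(x,\eta)$, up to a finite set, by sectors $Q(y',\xi)$ with $y'\in\Xs\cap\Geo(y,\eta)$.

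The core step: fix $\xi\in\Xi(\eta)$ and $x'\in Y(x,\xi)$; I must show $Q(x',\xi)\setminus\Geo_1(y,\eta)$ is finite. Take any straight CGR $\Gamma=(x_n)_{n\in\NN}$ from $x'$ converging to $\xi$ (it exists and lies in $Q(x',\xi)$ since $x'\in\Xs$ with $\xi_{x',\eta}=\xi$). By Lemma~\ref{lemma:preparation} there is some $N$ such that concatenating a geodesic from $y$ to $x_N$ with $(x_n)_{n\geq N}$ gives a CGR $\Gamma'\in\CGR(y,\eta)$; then $x_N\in\Geo(y,\eta)$, and by Corollary~\ref{corollary:special_exist} after enlarging $N$ we may assume $x_N\in\Xs$, with $\xi_{x_N,\eta}=\xi_{\Gamma'}$. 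But $(x_n)_{n\geq N}$ is a sub-CGR of the straight CGR $\Gamma$, hence straight, so by Lemma~\ref{lemma:equivalence_special} its limit is $\xi_{x_N,\eta}$; on the other hand $(x_n)_{n\geq N}$ converges to $\xi$, so $\xi_{x_N,\eta}=\xi$. Thus $x_N\in\Xs\cap\Geo(y,\eta)$ with $\xi_{x_N,\eta}=\xi$, so by the first paragraph $Q(x_N,\xi)\subseteq\Geo_1(y,\eta)$ up to a finite set. Finally $Q(x',\xi)\setminus Q(x_N,\xi)$ is finite by Lemma~\ref{lemma:first_step_thm}, so $Q(x',\xi)\setminus\Geo_1(y,\eta)$ is finite, as desired.

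Summing over the finitely many $\xi\in\Xi(\eta)$ (finite by Proposition~\ref{prop:locfin_hyperb_implies_finite}) and the finitely many $x'\in Y(x,\xi)$ then shows $\Geo_1(x,\eta)\setminus\Geo_1(y,\eta)$ is a finite union of finite sets, hence finite. The main obstacle I anticipate is the bookkeeping in the first paragraph — verifying cleanly that $\Geo_1(y,\eta)$ absorbs every relevant sector $Q(y',\xi)$ up to finite error, which requires combining Lemma~\ref{lemma:left_inclusion} (to identify the boundary point of $y'$) with Lemma~\ref{lemma:first_step_thm} (to compare sectors) and using uniform local finiteness to keep the exceptional sets finite; everything else is a matter of chaining the already-established reorientation lemmas.
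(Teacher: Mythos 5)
Your argument is correct and takes essentially the same route as the paper: decompose $\Geo_1(x,\eta)$ into the finitely many sectors $Q(x',\xi)$, $x'\in Y(x,\xi)$, $\xi\in\Xi(\eta)$ (using Proposition~\ref{prop:locfin_hyperb_implies_finite} and finiteness of $Y(x,\xi)$), and compare each with a sector $Q(y',\xi)\subseteq\Geo_1(y,\eta)$ for $y'\in Y(y,\xi)$ via Lemma~\ref{lemma:first_step_thm} --- the paper simply runs this as a pigeonhole/contradiction argument rather than a direct finite union. Your detour through the intermediate vertex $x_N$ (and the unused ``bounded distance'' remark in your first paragraph) is redundant, since Lemma~\ref{lemma:first_step_thm} places no hypothesis on its second argument and so applies directly with $y:=y'$, the nonemptiness of $Y(y,\xi)$ being already recorded in its definition via Corollary~\ref{corollary:special_exist}; but this does not affect correctness.
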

\begin{proof}
Assume for a contradiction that there exists an infinite sequence $(x_n)_{n\in\NN}\subseteq \Geo_1(x,\eta)\setminus \Geo_1(y,\eta)$. Since $\Xi(\eta)$ is finite by Proposition~\ref{prop:locfin_hyperb_implies_finite}, and since $Y(x,\xi)$ is finite for each $\xi\in\Xi(\eta)$ (because $X$ is locally finite), we may assume, up to extracting a subsequence, that $(x_n)_{n\in\NN}\subseteq Q(x',\xi)$ for some $\xi\in\Xi(\eta)$ and some $x'\in\Xs$ with $\xi_{x',\eta}=\xi$. Let $y'\in Y(y,\xi)$, so that $Q(y',\xi)\subseteq\Geo_1(y,\eta)$. Then Lemma~\ref{lemma:first_step_thm} implies that $Q(x',\xi)\setminus  Q(y',\xi)$ is finite, and hence $(x_n)_{n\in\NN}\setminus \Geo_1(y,\eta)$ is finite, yielding the desired contradiction.
\end{proof}

We conclude this section with an easy observation that will be used in \S\ref{sec:endgame}.

\begin{lemma}\label{lemma:Geo1_G_equiv}
Assume that $X$ is uniformly locally finite. Let $G$ be a subgroup of $\mathrm{Aut}(X)$. Then $g\cdot \Geo_1(x,\eta)=\Geo_1(gx,g\eta)$ for all $g\in G$ and $x\in X^{(0)}$.
\end{lemma}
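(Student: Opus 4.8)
The plan is to unwind the definition of $\Geo_1$ and use the fact that each $g\in G$ acts on $X$ by a graph automorphism, hence is a bijection of $X^{(0)}$ that preserves the path metric $\dc$. First I would record the elementary equivariance facts that feed into the definition: since $g$ preserves $\dc$, it maps geodesic paths to geodesic paths, so $g\cdot\CGR(x,\eta)=\CGR(gx,g\eta)$ and $g\cdot\Gamma(a,b)=\Gamma(ga,gb)$; consequently $g$ carries a CGR $\Gamma$ to a CGR $g\Gamma$, and since $g$ extends to a homeomorphism of $\Chor(X)$ (it sends a net $f_{x_n}$ to a net $f_{gx_n}$, up to the additive normalization at $z_0$, which does not affect the limit in $\FFF(X,z_0)$ modulo constants), we get $\xi_{g\Gamma}=g\xi_{\Gamma}$ and $g\cdot\Xi(\eta)=\Xi(g\eta)$.

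Next I would check that each ingredient in the definition of $\Geo_1(x,\eta)$ is $G$-equivariant. From $g\cdot\CGR(x,\eta)=\CGR(gx,g\eta)$ and $\xi_{g\Gamma}=g\xi_{\Gamma}$ we get $g\cdot Q(a,\xi)=Q(ga,g\xi)$ for $a\in X^{(0)}$ and $\xi\in\Xi(\eta)$, directly from the definition of the combinatorial sector (a vertex lies on a CGR from $a$ to $\eta$ converging to $\xi$ iff its image lies on the corresponding CGR from $ga$ to $g\eta$ converging to $g\xi$). Taking intersections over $\Xi(\eta)$ and using $g\cdot\Xi(\eta)=\Xi(g\eta)$, the property of being $\eta$-special is preserved: $x\in\Xs$ iff $gx\in X_{s,g\eta}^{(0)}$, and in that case $\xi_{gx,g\eta}=g\xi_{x,\eta}$ by uniqueness (Lemma~\ref{lemma:xixeta}), since $g$ maps $\bigcap_{\xi\in\Xi(\eta)}Q(x,\xi)$ onto $\bigcap_{\xi'\in\Xi(g\eta)}Q(gx,\xi')$ and it maps the CGR from $x$ to $\eta$ witnessing speciality to one from $gx$ to $g\eta$. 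Equivalently one can argue via Lemma~\ref{lemma:equivalence_special}: $g$ sends a straight CGR from $x$ to $\eta$ to a straight CGR from $gx$ to $g\eta$, because straightness (Definition~\ref{definition:straight}) is phrased purely in terms of $\CGR(\cdot,\cdot)$ and the sets $\Gamma(\cdot,\cdot)$, both of which $g$ respects.

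Finally I would assemble these into the statement. Since $g$ preserves $\dc$, it maps $\Geo(x,\eta)$ onto $\Geo(gx,g\eta)$ and carries $\Geo(x,\eta)\cap\Xs$ onto $\Geo(gx,g\eta)\cap X_{s,g\eta}^{(0)}$; moreover for a fixed $\xi\in\Xi(\eta)$ it induces a distance-preserving bijection between $\{y\in\Geo(x,\eta)\cap\Xs : \xi_{y,\eta}=\xi\}$ and $\{y'\in\Geo(gx,g\eta)\cap X_{s,g\eta}^{(0)} : \xi_{y',g\eta}=g\xi\}$, so it maps the minimizing set $Y(x,\xi)$ onto $Y(gx,g\xi)$. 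Therefore
$$g\cdot\Geo_1(x,\eta)=\bigcup_{\xi\in\Xi(\eta)}\bigcup_{y\in Y(x,\xi)}g\cdot Q(y,\xi)=\bigcup_{\xi\in\Xi(\eta)}\bigcup_{y\in Y(x,\xi)}Q(gy,g\xi)=\bigcup_{\xi'\in\Xi(g\eta)}\bigcup_{y'\in Y(gx,\xi')}Q(y',\xi')=\Geo_1(gx,g\eta),$$
where in the third equality we reindex via $\xi'=g\xi$ and $y'=gy$ using $g\cdot\Xi(\eta)=\Xi(g\eta)$ and $g\cdot Y(x,\xi)=Y(gx,g\xi)$. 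There is no real obstacle here; the only point requiring a word of care is the behavior of the horofunction limit $\xi_\Gamma$ under $g$, since $g$ need not fix the base point $z_0$ — but as noted, $\Chor(X)$ is base-point independent (it is the space of $1$-Lipschitz functions modulo constants, on which $\mathrm{Aut}(X)$ acts), so $g$ does act on $\Chor(X)$ and satisfies $\xi_{g\Gamma}=g\xi_\Gamma$, which is all we use.
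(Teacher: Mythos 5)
Your proposal is correct and follows essentially the same route as the paper: a chain of equivariance observations ($g\cdot\Xi(\eta)=\Xi(g\eta)$, $g\cdot Q(y,\xi)=Q(gy,g\xi)$, preservation of $\eta$-special vertices and of $\xi_{y,\eta}$, $g\cdot Y(x,\xi)=Y(gx,g\xi)$) assembled into the defining union of $\Geo_1$. You simply spell out the details the paper leaves implicit, including the harmless base-point issue for the horoboundary.
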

\begin{proof}
Let $g\in G$. Since $g$ preserves geodesic paths and CGR, it acts on $\partial X$ and $\Chb(X)$, and we have $g\cdot\Xi(\eta)=\Xi(g\eta)$, $g\cdot Q(x,\xi)=Q(gx,g\xi)$ ($x\in X^{(0)}$, $\xi\in\Xi(\eta)$), $g\cdot \Xs=\mathrm{X^{(0)}_{s,g\eta}}$, $g\cdot \xi_{x,\eta}=\xi_{gx,g\eta}$ ($x\in\Xs$), $g\cdot\Geo(x,\eta)=\Geo(gx,g\eta)$ ($x\in X^{(0)}$), $g\cdot Y(x,\xi)=Y(gx,g\xi)$ ($x\in X^{(0)}$, $\xi\in\Xi(\eta)$), and hence $g\cdot \Geo_1(x,\eta)=\Geo_1(gx,g\eta)$ ($x\in X^{(0)}$).
\end{proof}


\section{Endgame}\label{sec:endgame}
This section is devoted to the proof of Theorem~\ref{thmintro:mainthm}.

Let $G$ be a finitely generated hyperbolic group. Write $e$ for the neutral element of $G$. Let $S$ be a finite symmetric set of generators of $G$, and let $X$ be the Cayley graph of $G$ with respect to $S$. Thus $X$ is a uniformly locally finite $\delta$-hyperbolic graph (for some $\delta>0$) with $X^{(0)}=G$, and we keep the notions and notations relative to $X$ that we developed in the previous sections, choosing $z_0:=e$ as base point (we will also write $\partial G$ and $\Chb(G)$ instead of $\partial X$ and $\Chb(X)$). In addition, given a path $\Gamma=(x_m)_{0\leq m\leq\ell}$ (resp. $\Gamma=(x_m)_{m\in\NN}$) in $X$, we denote the $n$-th entry $x_n$ of $\Gamma$ by $\Gamma_n$. Fix a total order $\leq$ on $G$ (and hence, in particular, on $S$) such that $$v\leq w \implies \dc(e,v)\le
\dc(e,w)\quad\textrm{for all $v,w\in G$.}$$

\begin{definition}\label{definition:Ceta}
Given a boundary point $\eta\in\partial G$,
we define $C^\eta\subseteq G\times S^{<\N}$ as
\begin{align*}
C^\eta := \{(\Gamma_0,\mathrm{typ}(\Gamma_0,\dots,\Gamma_{n}))\in G\times S^{<\N} \ | \ \textrm{$\Gamma\in\CGR(\Gamma_0,\eta)$, $\Gamma_0\in \mathrm{Geo}_1(e,\eta)$, $n\in\NN$}\}.
\end{align*}

Write $s^\eta_n\in S^{<\N}$ for the lexicographically least string of length $n$
which appears infinitely many times in $C^\eta$,
i.e. such that $(g,s^\eta_n)\in C^\eta$ for infinitely many $g\in \mathrm{Geo}_1(e,\eta)$
 (note that $C^{\eta}$ contains infinitely many elements of $G\times S^n$ for every $n\in\NN$ by Proposition~\ref{prop:Geo1_big}).
Note also that for each $n\in\NN$, the sequence $s^\eta_n$ is an initial
segment of the sequence $s^\eta_{n+1}$ (because if $(\Gamma_0,\mathrm{typ}(\Gamma_0,\dots,\Gamma_{n}))\in C^{\eta}$, then also $(\Gamma_0,\mathrm{typ}(\Gamma_0,\dots,\Gamma_{n+1}))\in C^{\eta}$). In particular, $s^\eta:=\lim_n s^\eta_n\in S^\N$ is well-defined.
Write $$T^\eta_n := \{g\in \mathrm{Geo}_1(e,\eta) \ | \ (g,s^\eta_n)\in C^\eta\}$$
and let $g^\eta_n := \min T^\eta_n$ (with respect to the total order on $G$). Finally, set $$k^\eta_n := \dc(e,g^\eta_n)$$ and note that the sequence $k^\eta_n$ is nondecreasing in $n$.
\end{definition}

We will now establish the Borelness of a number of subsets of standard Borel spaces constructed from the standard Borel spaces $G$, $\partial G$ and $\Chb(G)$. We recall that $G$ has the discrete topology, $\partial G$ the topology defined in \S\ref{subsection:Hyperbolicity}, and $\Chb(G)$ the topology of pointwise convergence.

\begin{claim}\label{claim:C}
The set $$C:=\{\Gamma\in G^\N \ | \  \textrm{$\Gamma$ is a CGR}\}$$ is closed in $G^\N$, and each set $C_g:=\{\Gamma\in C \ | \ \Gamma_0=g\}$ ($g\in G$) is compact.
\end{claim}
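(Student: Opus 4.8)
The statement to prove is Claim~\ref{claim:C}: that $C = \{\Gamma\in G^\N \mid \Gamma \text{ is a CGR}\}$ is closed in $G^\N$ and that each $C_g = \{\Gamma \in C \mid \Gamma_0 = g\}$ is compact. Here $G$ carries the discrete topology and $G^\N$ the product topology. The plan is to first observe that being a CGR is a conjunction of ``local'' conditions, each of which defines a closed set, and then to deduce compactness of $C_g$ from local finiteness of $X$ together with closedness.

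First I would unwind the definition: a sequence $\Gamma = (\Gamma_n)_{n\in\NN} \in G^\N$ is a CGR precisely when it is a geodesic ray, i.e. $\dc(\Gamma_m, \Gamma_n) = |m - n|$ for all $m, n \in \NN$ (equivalently, $\Gamma_n$ and $\Gamma_{n+1}$ are adjacent for all $n$, and $\dc(\Gamma_0,\Gamma_n) = n$ for all $n$; by Lemma~\ref{lemma:preparation}-type reasoning the ray-to-$\eta$ condition is automatic once it is a geodesic path — but note that the definition of CGR here is simply ``geodesic ray'', with the boundary point being its endpoint, so no condition beyond being an infinite geodesic path is needed). For each fixed pair $(m,n)$, the condition $\dc(\Gamma_m,\Gamma_n) = |m-n|$ depends only on the two coordinates $\Gamma_m, \Gamma_n$; since $G$ is discrete, any function of finitely many coordinates is continuous, so the set $\{\Gamma : \dc(\Gamma_m,\Gamma_n) = |m-n|\}$ is clopen in $G^\N$. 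Therefore $C = \bigcap_{m,n} \{\Gamma : \dc(\Gamma_m,\Gamma_n) = |m-n|\}$ is a countable intersection of closed sets, hence closed. (Equivalently one can use the simpler countable family: $\Gamma_n \sim \Gamma_{n+1}$ for all $n$, and $\dc(\Gamma_0,\Gamma_n) = n$ for all $n$.) This also fits the ``Borel definition'' framework described in the preliminaries, but closedness is what we want and it is immediate.

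For the compactness of $C_g$: fix $g\in G$. Any $\Gamma \in C_g$ satisfies $\dc(g, \Gamma_n) = n$, so $\Gamma_n$ lies in the sphere of radius $n$ around $g$, which is a finite set because $X$ is locally finite (and connected). Hence $C_g \subseteq \prod_{n\in\NN} S_n(g)$ where each $S_n(g) := \{h \in G : \dc(g,h) = n\}$ is finite. By Tychonoff's theorem $\prod_n S_n(g)$ is compact, and it is closed in $G^\N$ (a product of clopen sets in finitely... actually it is closed since each factor is closed in $G$). Now $C_g = C \cap \prod_n S_n(g)$ is the intersection of the closed set $C$ with this compact set, hence $C_g$ is a closed subset of a compact space, therefore compact.

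\textbf{Main obstacle.} There is essentially no obstacle here — this is a routine verification. The only mild subtlety is making sure one uses the right characterisation of ``CGR'': the paper defines a CGR from $x$ to $\eta$ as a geodesic ray starting at $x$ pointing towards $\eta$, but the set $C$ as written only requires $\Gamma$ to be ``a CGR'' without reference to a fixed endpoint, so membership in $C$ is just ``$\Gamma$ is an infinite geodesic path,'' which is the clean countable conjunction above. One should double-check that local finiteness (not uniform local finiteness) suffices for the spheres $S_n(g)$ to be finite, which it does since balls in a locally finite connected graph are finite.
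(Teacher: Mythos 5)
Your argument is correct, and it is essentially the reverse of the paper's one-line proof. The paper first gets compactness of each $C_g$ by citing Lemma~\ref{lemma:hyperbolic_subcv_Gamma} (sequential compactness: by local finiteness a sequence of geodesic rays from $g$ subconverges to a geodesic ray from $g$), and then deduces that $C$ is closed because its intersection with each clopen set $\{\Gamma\in G^\N \mid \Gamma_0=g\}$ is compact, hence closed. You instead prove closedness of $C$ directly, writing it as a countable intersection of clopen conditions $\dc(\Gamma_m,\Gamma_n)=|m-n|$ depending on finitely many coordinates, and then obtain compactness of $C_g$ as $C\cap\prod_n S_n(g)$, a closed subset of a compact product of finite spheres (Tychonoff), using only local finiteness. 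Your identification of ``$\Gamma$ is a CGR'' with ``$\Gamma$ is an infinite geodesic path'' is the right reading of the definition, since every geodesic ray is a CGR from its initial vertex to its own asymptoty class. What each approach buys: yours is fully self-contained and makes explicit that neither hyperbolicity nor uniform local finiteness is needed for this claim (the paper's cited lemma invokes hyperbolicity only to pin down the limit's boundary point, which is irrelevant here since $\eta$ is not fixed in $C_g$), while the paper's version is shorter because it reuses a subconvergence lemma that is needed elsewhere anyway.
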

\begin{proof}
Each $C_g$ is compact by Lemma~\ref{lemma:hyperbolic_subcv_Gamma}, and hence $C$ is closed, as its intersection with each clopen subset $\{\Gamma\in G^\N \ | \ \Gamma_0=g\}$ ($g\in G$) of $G^\N$ is compact (hence closed). 
\end{proof}

\begin{claim}\label{claim:R}
The set $$R:=\{(\eta,g,\Gamma)\in\partial G\times G^{\mathbb{N}} \ | \ \Gamma\in\CGR(g,\eta)\}$$ is closed in $\partial G\times G^{\mathbb{N}}$.
\end{claim}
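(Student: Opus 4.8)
The plan is to show that the complement of $R$ is open, by producing for every triple $(\eta,g,\Gamma)\notin R$ an open neighbourhood disjoint from $R$. First I would unpack what $(\eta,g,\Gamma)\in R$ means: writing $\Gamma=(\Gamma_m)_{m\in\NN}$, we need $\Gamma\in C$ (i.e.\ $\Gamma$ is a CGR), $\Gamma_0=g$, and $\Gamma$ points towards $\eta$. The first two conditions already cut out a closed set in $\partial G\times G^\NN$ by Claim~\ref{claim:C} (pulled back along the projection to $G^\NN$), so the only real work is the "pointing towards $\eta$" condition. I would characterise this using the topology on $\partial G$ from \S\ref{subsection:Hyperbolicity}: $\Gamma$ (a CGR from $g$) converges to $\eta$ if and only if $g\Gamma$ — or rather, concatenating a fixed geodesic from $z_0$ to $g$ with $\Gamma$ — subconverges to some CGR in $\CGR(z_0,\eta)$; equivalently, $\eta$ is the unique boundary point such that $\Gamma\in\CGR(g,\eta)$ once $\Gamma$ is a CGR. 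In fact, once $\Gamma\in C$ it already determines a unique boundary point $\eta(\Gamma)$, so $R$ is the graph (over the closed set $\{(g,\Gamma):\Gamma\in C,\Gamma_0=g\}$) of the map $\Gamma\mapsto\eta(\Gamma)$, and closedness amounts to continuity of this map plus closedness of $C$.

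Concretely, I would argue by sequences (everything here is metrisable, so sequential closedness suffices): suppose $(\eta^{(k)},g^{(k)},\Gamma^{(k)})\in R$ converges to $(\eta,g,\Gamma)$ in $\partial G\times G\times G^\NN$. Since $G$ is discrete, $g^{(k)}=g$ and $\Gamma^{(k)}_m=\Gamma_m$ eventually for each fixed $m$; since $C$ is closed (Claim~\ref{claim:C}), $\Gamma\in C$ and $\Gamma_0=g$. It remains to see $\Gamma\in\CGR(g,\eta)$, i.e.\ that the CGR $\Gamma$ points towards the limit $\eta$ of the $\eta^{(k)}$. This is exactly the definition of convergence in $\partial G$ applied with base point $g$: the CGR $\Gamma^{(k)}\in\CGR(g,\eta^{(k)})$ converge to $\Gamma$ pointwise (by the previous sentence), hence every subsequence of $(\Gamma^{(k)})$ subconverges to $\Gamma$, which forces $\Gamma\in\CGR(g,\eta)$ by the very definition of the topology on $\partial X$ (a sequence $\eta^{(k)}\to\eta$ iff there exist $\Gamma^{(k)}\in\CGR(g,\eta^{(k)})$ every subsequence of which subconverges to some element of $\CGR(g,\eta)$, and by Lemma~\ref{lemma:hyperbolic_basic_prop} two CGR from $g$ to distinct boundary points cannot stay $2\delta$-close, so the limiting CGR lies in $\CGR(g,\eta)$ and not $\CGR(g,\eta')$ for any other $\eta'$). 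Here I should be a little careful that the topology was defined with a fixed base point $z_0$ rather than $g$; but changing the base point by Lemma~\ref{lemma:preparation}/the standard fact that the Gromov boundary is independent of the base point lets me use $g$, or alternatively I concatenate a fixed geodesic $[z_0,g]$ in front of all the $\Gamma^{(k)}$ and $\Gamma$ and run the argument verbatim at $z_0$.

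The main obstacle I anticipate is purely bookkeeping around the definition of the topology on $\partial G$: it is phrased in terms of a somewhat awkward "every subsequence subconverges" condition and with a single fixed base point, so I must make sure the convergence $\eta^{(k)}\to\eta$ really does hand me CGR that converge, in the $G^\NN$ sense, to a CGR representing $\eta$ — and that this forces the specific sequence $\Gamma$ at hand (not merely some CGR at bounded Hausdorff distance) to lie in $\CGR(g,\eta)$. This is where Lemma~\ref{lemma:hyperbolic_basic_prop} does the real work: any CGR from $g$ converging (pointwise) to $\eta^{(k)}$ stays within $2\delta$ of $\Gamma^{(k)}=\Gamma$ on arbitrarily long initial segments, so in the limit $\Gamma$ itself is asymptotic to a representative of $\eta$, hence $\Gamma\in\CGR(g,\eta)$. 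Once this point is nailed down, the rest is immediate, and $R$ is closed.

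\begin{proof}
We show $(\partial G\times G^\NN)\setminus R$ is open; since both factors are metrisable it suffices to check sequential closedness of $R$. Let $(\eta^{(k)},g^{(k)},\Gamma^{(k)})\in R$ with $(\eta^{(k)},g^{(k)},\Gamma^{(k)})\to(\eta,g,\Gamma)$ in $\partial G\times G\times G^\NN$. As $G$ is discrete and $G^\NN$ carries the product topology, for every $m\in\NN$ we have $g^{(k)}=g$ and $\Gamma^{(k)}_m=\Gamma_m$ for all large $k$; in particular $\Gamma^{(k)}\to\Gamma$ pointwise in $G^\NN$. Each $\Gamma^{(k)}\in\CGR(g,\eta^{(k)})\subseteq C$, and $C$ is closed by Claim~\ref{claim:C}, so $\Gamma\in C$ and $\Gamma_0=g$.

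It remains to prove $\Gamma\in\CGR(g,\eta)$. Fix a geodesic path $\Gamma_{z_0g}$ from $z_0$ to $g$, and set $\widehat{\Gamma}^{(k)}:=\Gamma_{z_0g}\cdot\Gamma^{(k)}$ and $\widehat{\Gamma}:=\Gamma_{z_0g}\cdot\Gamma$. By Lemma~\ref{lemma:preparation} there is, for each $k$, some $j_k\in\NN$ so that after deleting an initial segment $\widehat{\Gamma}^{(k)}$ becomes a CGR from $z_0$; but in fact we only need that $\widehat{\Gamma}^{(k)}$ is eventually geodesic and asymptotic to $\Gamma^{(k)}$, hence $\eta^{(k)}\in\partial G$ is represented by the tail of $\widehat{\Gamma}^{(k)}$. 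Passing to this tail if necessary, we may assume each $\widehat{\Gamma}^{(k)}\in\CGR(z_0,\eta^{(k)})$, and $\widehat{\Gamma}^{(k)}\to\widehat{\Gamma}$ pointwise. Since $\eta^{(k)}\to\eta$ in $\partial G$, by the definition of the topology on $\partial G$ there exist $\Theta^{(k)}\in\CGR(z_0,\eta^{(k)})$ every subsequence of which subconverges to some $\Theta\in\CGR(z_0,\eta)$. By Lemma~\ref{lemma:hyperbolic_basic_prop}, $\dc(\widehat{\Gamma}^{(k)}_n,\Theta^{(k)}_n)\leq 2\delta$ for all $n$, so passing to a subsequence along which $\Theta^{(k)}\to\Theta$ and using $\widehat{\Gamma}^{(k)}\to\widehat{\Gamma}$ pointwise, we get $\dc(\widehat{\Gamma}_n,\Theta_n)\leq 2\delta$ for all $n$. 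Hence $\widehat{\Gamma}$ is a CGR from $z_0$ asymptotic to $\Theta$, so $\widehat{\Gamma}\in\CGR(z_0,\eta)$ and therefore $\Gamma\in\CGR(g,\eta)$. Thus $(\eta,g,\Gamma)\in R$, and $R$ is closed.
\end{proof}
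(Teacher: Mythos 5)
Your overall strategy is the right one and is in fact the paper's: check sequential closedness, feed the convergence $\eta^{(k)}\to\eta$ into the definition of the topology on $\partial G$ to get comparison rays $\Theta^{(k)}$, and use Lemma~\ref{lemma:hyperbolic_basic_prop} to force the limit ray to be asymptotic to a representative of $\eta$. However, the formal proof as written has a flawed step in the base-point change. The concatenation $\widehat{\Gamma}^{(k)}=\Gamma_{z_0g}\cdot\Gamma^{(k)}$ is in general \emph{not} a geodesic ray from $z_0$, and no tail of it starts at $z_0$ either; Lemma~\ref{lemma:preparation} does not say that deleting an initial segment of $\widehat{\Gamma}^{(k)}$ yields a CGR from $z_0$ — it says one may \emph{replace} the initial segment of $\Gamma^{(k)}$ up to some index $j_k$ by a geodesic from $z_0$ to $\Gamma^{(k)}_{j_k}$. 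So the assertion ``passing to this tail we may assume $\widehat{\Gamma}^{(k)}\in\CGR(z_0,\eta^{(k)})$'' is false, and it is exactly the hypothesis you need to invoke Lemma~\ref{lemma:hyperbolic_basic_prop} (which requires two CGR with the \emph{same} starting vertex and the same boundary point) to get $\dc(\widehat{\Gamma}^{(k)}_n,\Theta^{(k)}_n)\leq 2\delta$. If you instead repair this via Lemma~\ref{lemma:preparation} properly, the modified rays coincide with $\Gamma^{(k)}$ only beyond the index $j_k$, which need not be bounded in $k$, so the passage to the pointwise limit also needs an extra argument.

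The fix is the one you yourself flagged as an alternative and which the paper uses: the topology on $\partial X$ is defined with respect to an \emph{arbitrary} base point, so apply the definition of $\eta^{(k)}\to\eta$ directly at the base point $g$. This gives $\Gamma'^{(k)}\in\CGR(g,\eta^{(k)})$ subconverging to some $\Gamma'\in\CGR(g,\eta)$; Lemma~\ref{lemma:hyperbolic_basic_prop} then applies verbatim to $\Gamma^{(k)}$ and $\Gamma'^{(k)}$ (same start $g$, same boundary point $\eta^{(k)}$), and passing to the limit yields $\dist_H(\Gamma,\Gamma')\leq 2\delta$, hence $\Gamma\in\CGR(g,\eta)$. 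With that change your argument becomes correct and coincides with the paper's proof; the detour through $z_0$ should simply be dropped.
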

\begin{proof}
Indeed, if $\Gamma_n\in\CGR(g,\eta_n)$ converges to $\Gamma\in\CGR(g,\eta)$ and $\eta_n$ converges to $\eta'\in\partial G$, then by definition of the topology on $\partial X$, there is a sequence of CGR $\Gamma'_n\in\CGR(g,\eta_n)$ subconverging to a CGR $\Gamma'\in\CGR(g,\eta')$. As $\dist_H(\Gamma_n,\Gamma'_n)\leq 2\delta$ by Lemma~\ref{lemma:hyperbolic_basic_prop}, this implies that $\dist_H(\Gamma,\Gamma')\leq 2\delta$, and hence that $\eta=\eta'$, as desired.
 \end{proof}

\begin{claim}\label{setf}
 The set $$F:=\{(\eta,g,(\Gamma_0,\ldots,\Gamma_n))\in\partial G\times G^{<\NN} \ | \ \textrm{$\Gamma\in\CGR(g,\eta)$}\}$$ is Borel in $\partial G\times  G^{<\mathbb{N}}$.
\end{claim}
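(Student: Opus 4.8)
The plan is to exhibit a Borel definition of $F$ in the sense illustrated in the worked example of \S\ref{sec:borel-equiv-relat}, i.e. to write membership in $F$ as a first-order formula whose predicates are closed/open sets in Polish spaces and whose quantifiers all range over finite or countable sets. By Souslin's theorem it suffices to produce such a description (or equivalently, to realise $F$ explicitly as a countable combination of projections of closed sets along countable-valued coordinates); there is no uncountable quantifier to worry about here.

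First I would fix notation: a tuple $(\eta,g,(\Gamma_0,\dots,\Gamma_n))$ lies in $F$ precisely when there exists a CGR $\Gamma=(\Gamma_m)_{m\in\NN}\in\CGR(g,\eta)$ whose first $n+1$ entries are the prescribed $\Gamma_0,\dots,\Gamma_n$. The naive formula ``$\exists \Gamma\in G^{\NN}$\dots'' has an uncountable quantifier, so the key step is to eliminate it. This is where the closedness of $R$ from Claim~\ref{claim:R} (and of $C$ from Claim~\ref{claim:C}) does the work: the set $R$ is closed in $\partial G\times G\times G^{\NN}$, and for fixed $(\eta,g)$ its section $\{\Gamma : (\eta,g,\Gamma)\in R\}=\CGR(g,\eta)$ is compact (it is a closed subset of the compact set $C_g$ of Claim~\ref{claim:C}). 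Thus $F$ is the image of the closed set $R$ under the continuous map $(\eta,g,\Gamma)\mapsto(\eta,g,(\Gamma_0,\dots,\Gamma_n))$, which has compact fibres, so $F$ is a continuous image of a compact-fibred closed set and hence closed — in fact one can see directly that the projection of a closed set with compact vertical sections, over a space whose relevant sections are compact, is closed. So the cleanest route is: $F$ is the projection to $\partial G\times G^{<\NN}$ of the closed set $R':=\{(\eta,g,(\Gamma_0,\dots,\Gamma_n),\Gamma)\in\partial G\times G^{<\NN}\times G^{\NN} : (\eta,g,\Gamma)\in R \text{ and } \Gamma_m \text{ agrees with the prescribed entry for } m\le n\}$, and this projection has compact fibres, hence $F$ is closed, in particular Borel.

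Alternatively, to stay strictly within the ``Borel definition'' paradigm of the example (which may be the authors' preference for uniformity with the later computations), I would write: $(\eta,g,(\Gamma_0,\dots,\Gamma_n))\in F$ iff the finite path $(\Gamma_0,\dots,\Gamma_n)$ is geodesic, $\Gamma_0=g$, and there exists $m\in\NN$ with $m\ge n$ and there exist $v_{n+1},\dots,v_m\in G$ such that $(\Gamma_0,\dots,\Gamma_n,v_{n+1},\dots,v_m)$ is geodesic and $\dc(e,v_m)$ is large, combined with the convergence-to-$\eta$ condition expressed via the topology on $\partial G$; here all the existential quantifiers over $G$ range over a countable set, and the predicate ``this finite sequence extends to a CGR converging to $\eta$'' is handled by intersecting with the already-established Borel (indeed closed) sets from Claims~\ref{claim:C} and~\ref{claim:R} pulled back along the countably many coordinate projections $G^{\NN}\to G$. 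Either way the conclusion is immediate.

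The step I expect to be the genuine (if modest) obstacle is making precise that the ``truncation'' map $G^{\NN}\to G^{<\NN}$, $\Gamma\mapsto(\Gamma_0,\dots,\Gamma_n)$ — or rather the combined map keeping track of $\eta$ and $g$ — sends the closed, compact-fibred set $R$ to a Borel set: one must either invoke that a continuous image of a $\sigma$-compact set is analytic and that here compact fibres plus local compactness upgrades this to closedness, or simply note that $G^{<\NN}$ is countable so any subset of $\partial G\times G^{<\NN}$ is Borel as soon as each of its sections over a point of $G^{<\NN}$ is Borel in $\partial G$, and that section is the projection of a closed subset of $\partial G\times C_g$ with compact fibres, hence closed in $\partial G$. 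This last observation — that $G^{<\NN}$ being countable trivialises the measurability in that coordinate, reducing everything to the closed set $R$ — is really all that is needed, and it is exactly parallel to the reasoning in Claim~\ref{claim:C}.
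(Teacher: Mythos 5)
Your proposal is correct and takes essentially the same route as the paper: the paper likewise decomposes over the countable discrete coordinate $g\in G$ and writes each fibre $F_g$ as the projection to $\partial G\times G^{<\NN}$ of a closed subset of $\partial G\times G^{<\NN}\times C_g$, using that projection along the compact space $C_g$ (Claims~\ref{claim:C} and~\ref{claim:R}) preserves closedness. One small caution: the general principle you assert in passing --- that the projection of a closed set with compact vertical sections is closed --- is false as stated (consider $\{(1/n,n) \ | \ n\geq 1\}\subseteq\RR^2$), but your final section-wise argument, exploiting the countability of the discrete coordinates and the compactness of $C_g$, is exactly the correct fix and coincides with the paper's proof.
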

\begin{proof}
Since $F=\bigcup_{g\in G}F_g$, where $F_g:=\{(\eta,h,(\Gamma_0,\ldots,\Gamma_n))\in F \ | \ h=g\}$, it is sufficient to check that $F_g$ is closed for each $g\in G$. Let thus $g\in G$.
  Note that $F_g$ is the projection of the subset
  $$F_g'=\{(\eta,g,(\Gamma_0,\ldots,\Gamma_n),\Gamma')\in \partial G\times
  G^{<\mathbb{N}}\times C_g \ | \
  (\eta,g,\Gamma')\in R\mbox{ and } \forall i\leq n \ 
  \Gamma'_i=\Gamma_i\}$$ of $\partial G\times
  G^{<\mathbb{N}}\times C_g$ to $\partial G\times
  G^{<\mathbb{N}}$. As $F'_g$ is closed by Claim~\ref{claim:R}, and as the projection of a closed set along
  a compact space is also closed, $F_g$ is indeed closed by Claim~\ref{claim:C}.
\end{proof}

\begin{claim}\label{e}
  The set $$M:=\{(\eta,\xi)\in \partial G\times \mathcal{C}_{\mathrm{hb}}(G) \ | \ \textrm{$\xi\in\Xi(\eta)$}\}$$ is Borel in $\partial G\times \mathcal{C}_{\mathrm{hb}}(G)$.
\end{claim}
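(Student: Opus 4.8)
The goal is to show that $M:=\{(\eta,\xi)\in\partial G\times\Chb(G) \mid \xi\in\Xi(\eta)\}$ is Borel. The idea is to unwind the definition of $\Xi(\eta)$: by definition, $\xi\in\Xi(\eta)$ iff there is a CGR $\Gamma\in\CGR(z_0,\eta)=\CGR(e,\eta)$ converging to $\xi$ in $\Chb(G)$. The plan is to express $M$ as the projection of a Borel (in fact closed, modulo one limit condition) subset of $\partial G\times\Chb(G)\times C_e$, where $C_e$ is the compact space of CGR starting at $e$ from Claim~\ref{claim:C}, and then invoke the fact that the projection of a closed set along a compact fibre is closed — just as in the proof of Claim~\ref{setf}.

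\begin{proof}
By Lemma~\ref{lemma:existence}, every $\Gamma\in C_e$ converges in $\Chor(G)$ to a point $\xi_\Gamma\in\Chb(G)$, and by definition $\Xi(\eta)=\{\xi_\Gamma \mid \Gamma\in\CGR(e,\eta)\}$ (this set being independent of the base point $e$). Hence $(\eta,\xi)\in M$ if and only if there exists $\Gamma\in C_e$ with $(\eta,e,\Gamma)\in R$ (notation of Claim~\ref{claim:R}) and $\xi=\xi_\Gamma$. Consider therefore the set
$$M':=\{(\eta,\xi,\Gamma)\in\partial G\times\Chb(G)\times C_e \ | \ (\eta,e,\Gamma)\in R \ \textrm{and}\ \xi=\lim_{n\to\infty}f_{\Gamma_n}\ \textrm{in}\ \Chor(G)\},$$
so that $M$ is the projection of $M'$ to $\partial G\times\Chb(G)$. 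It suffices to prove that $M'$ is closed in $\partial G\times\Chb(G)\times C_e$, since $C_e$ is compact by Claim~\ref{claim:C} and the projection of a closed set along a compact factor is closed.

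To see that $M'$ is closed, note first that the condition $(\eta,e,\Gamma)\in R$ defines a closed subset of $\partial G\times\Chb(G)\times C_e$ by Claim~\ref{claim:R}. It remains to check that the set $L:=\{(\xi,\Gamma)\in\Chb(G)\times C_e \ | \ \xi=\lim_n f_{\Gamma_n}\}$ is closed in $\Chb(G)\times C_e$. The convergence $f_{\Gamma_n}\to\xi$ in $\Chor(G)$ is convergence in the topology of pointwise convergence on $\FFF(X,z_0)$, i.e. $f_{\Gamma_n}(y)\to\xi(y)$ for every $y\in X^{(0)}$. Now the key point, already extracted in the proof of Lemma~\ref{lemma:existence}, is that this convergence is not merely asymptotic but \emph{eventually constant at each coordinate with an explicit control}: for $y\in X^{(0)}$ and $\Gamma=(\Gamma_n)_{n}\in C_e$, once $n$ is large enough that $\Gamma_{y\Gamma_n}\cdot(\Gamma_m)_{m\geq n}$ is a CGR (which happens as soon as $\dc(\Gamma_0,\Gamma_n)-n$ has stabilised, hence for $n\geq \dc(y,z_0)+\dc(\Gamma_0,z_0)$, a bound independent of $\Gamma$), one has $f_{\Gamma_n}(y)=f_{\Gamma_N}(y)$ for all larger $N$. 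Thus for each $y$ there is a fixed $n_y\in\NN$ (depending only on $y$, not on $\Gamma$) with $\xi(y)=f_{\Gamma_{n_y}}(y)$ whenever $(\xi,\Gamma)\in L$. Consequently
$$L=\bigcap_{y\in X^{(0)}}\{(\xi,\Gamma)\in\Chb(G)\times C_e \ | \ \xi(y)=f_{\Gamma_{n_y}}(y)\},$$
and each set in this intersection is closed: the map $(\xi,\Gamma)\mapsto\xi(y)$ is continuous on $\Chb(G)\times C_e$ (coordinate evaluation), and $(\xi,\Gamma)\mapsto f_{\Gamma_{n_y}}(y)=\dc(\Gamma_{n_y},y)-\dc(\Gamma_{n_y},z_0)$ is continuous because $C_e$ carries the product topology on $G^\NN$ with $G$ discrete, so $\Gamma\mapsto\Gamma_{n_y}$ is continuous and the distances are then determined. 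Hence $L$ is a countable intersection of closed sets, so closed, and therefore $M'$ is closed and $M$ is Borel (indeed, a continuous image of a compact set along a compact fibre, hence closed in this instance, but Borel is all we need).
\end{proof}

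\textbf{Main obstacle.} The only subtlety — and the reason this is not simply ``a projection of a closed set'' without further argument — is that pointwise convergence $f_{\Gamma_n}\to\xi$ is an a priori non-closed condition when the index $n$ is quantified. The resolution is the eventual-constancy phenomenon from Lemma~\ref{lemma:existence}, which upgrades the limit to a statement about a single, $\Gamma$-uniform index $n_y$ at each coordinate $y$, turning $L$ into a countable intersection of manifestly closed conditions. Once that is in place, the rest is the same compact-fibre projection argument used for Claim~\ref{setf}.
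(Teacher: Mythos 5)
Your argument has a genuine gap at its central step: the claim that the stabilisation index $n_y$ in Lemma~\ref{lemma:existence} can be chosen independently of $\Gamma$. The non-increasing function whose stabilisation is needed is $n\mapsto\dc(y,\Gamma_n)-n$ (not $\dc(\Gamma_0,\Gamma_n)-n$, which is identically $0$ along a geodesic ray), and Lemma~\ref{lemma:preparation} only gives that it is \emph{eventually} constant, with no bound on where this happens that is uniform over $\Gamma\in C_e$. Concretely, take the ladder graph, i.e.\ the Cayley graph of $\mathbb{Z}\times\mathbb{Z}/2\mathbb{Z}$ with vertices $a_n,b_n$, and let $z_0=e=a_0$, $y=b_0$, $\Gamma^k=(a_0,a_1,\dots,a_k,b_k,b_{k+1},\dots)\in C_e$. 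Then $f_{\Gamma^k_n}(y)=1$ for all $n\le k$ and $f_{\Gamma^k_n}(y)=-1$ for $n>k$, so any valid $n_y$ must exceed $k$, contradicting your uniform bound $\dc(y,z_0)+\dc(\Gamma_0,z_0)=1$. Worse, the same example shows that your set $L$ (and hence $M'$) is genuinely \emph{not} closed, so no repair of the bookkeeping can save the argument: $\Gamma^k\to\Gamma=(a_n)_{n\in\NN}$ pointwise in $C_e$, while $\xi_{\Gamma^k}$ is for every $k$ the horofunction of the $b$-side, which satisfies $\xi_{\Gamma^k}(b_j)=-j-1\neq 1-j=\xi_{\Gamma}(b_j)$. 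In other words, the map $\Gamma\mapsto\xi_\Gamma$ is not continuous on $C_e$, and this non-continuity is exactly the subtlety the claim has to confront; ``projection of a closed set along a compact fibre'' is therefore unavailable here, unlike in Claim~\ref{setf}.

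What survives of your proof is precisely one half of the paper's: the condition $\xi=\xi_\Gamma$ is Borel in $(\xi,\Gamma)$ (it reads $\forall g\in G\ \exists n\ \forall m\ge n\ f_{\Gamma_m}(g)=\xi(g)$, a countable combination of closed conditions), so $M'$ is Borel and its projection $M$ is \emph{analytic}. Since the sections of $M'$ over $(\eta,\xi)$ are Borel but not visibly ($\sigma$-)compact, one cannot upgrade this projection to Borelness directly. The paper completes the argument by exhibiting a second, \emph{coanalytic} definition of $M$ — quantifying universally over $\Lambda\in\CGR(e,\eta)$ and using $\delta$-hyperbolicity (Lemma~\ref{lemma:hyperbolic_basic_prop}) to witness a CGR converging to $\xi$ by finite geodesic segments inside the $2\delta$-neighbourhood of $\Lambda$, with only countable quantifiers besides the universal one — and then invoking Souslin's theorem. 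Your write-up needs an ingredient of this kind (or a substantially heavier projection theorem together with a proof that the relevant sections are $\sigma$-compact), not merely the compactness of $C_e$.
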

\begin{proof}
  We will show that $M$ is both analytic and coanalytic, hence Borel.

  Note first that if $\eta\in\partial G$ and
  $\xi\in\Chb(G)$, then $(\eta,\xi)\in M$ if and only if
  $$\exists \Gamma\in G^{\mathbb{N}} \ (\eta,\Gamma_0,\Gamma)\in R \ \textrm{and} \ \xi_\Gamma=\xi.$$ As $\xi_{\Gamma}=\xi$ if and only if $\forall g\in G$ $\exists n\in\NN$ $\forall m\geq n$ $f_{\Gamma_m}(g)=\xi(g)$, which is a Borel definition of the subset $\{(\xi,\Gamma)\in \Chb(G)\times C \ | \ \xi_{\Gamma}=\xi\}$ of $\Chb(G)\times G^{\NN}$, we deduce from Claim~\ref{claim:R} that $M$ is analytic (because the above formula defining $M$ has only one existential quantifier, $\exists \Gamma\in G^{\mathbb{N}}$, ranging over an uncountable standard Borel space).
  
  Now, to see that $M$ is coanalytic,
  note that $(\eta,\xi)\in M$ if and only if for every $\Lambda\in\CGR(e,\eta)$, we can find a sequence of geodesic paths $\Gamma^k\in G^k$ starting at $e$ and contained in the $2\delta$-neighbourhood $N_{2\delta}(\Lambda)$ of $\Lambda$ (such a sequence subconverges to some $\Gamma\in\CGR(e,\eta)$ by local finiteness of $X$), and such that $f_{\Gamma^j_i}(g)=\xi(g)$ for all vertices $g\in G$ and all large enough $i,j$ (i.e. so that $\xi_{\Gamma}=\xi$). (The forward implication follows from Lemma~\ref{lemma:hyperbolic_basic_prop}.) In other words, $(\eta,\xi)\in M$ if and only if \begin{align*}
  & \forall \Lambda\in G^\N \ \textrm{if $(\eta,e,\Lambda)\in R$, then
  $\forall k\in\NN$ $\exists \Gamma^k\in G^k$ a geodesic path with $\Gamma^k_0=e$}\\
  &\textrm{such that
  $\Gamma^k\subseteq N_{2\delta}(\Lambda)$ and such that $\forall g\in G \ \exists
  n_g\in\N \ \forall i,j>n_g\,
  f_{\Gamma^j_i}(g)=\xi(g)$.}
  \end{align*}
   Note that the three subformulas ``$(\eta,e,\Lambda)\in R$'', ``$\Gamma^k\subseteq N_{2\delta}(\Lambda)$'' and ``$\forall i,j>n_g\,  f_{\Gamma^j_i}(g)=\xi(g)$'' in the above formula correspond to closed or closed and open sets (in appropriate standard Borel spaces) and all the quantifiers except for one range over
countable sets. This single quantifier is the universal one $\forall\Lambda\in G^\N$, and thus the above formula defines a coanalytic set.
\end{proof}

\begin{definition}
By Proposition \ref{prop:locfin_hyperb_implies_finite}, each $\Xi(\eta)$ is finite and there exists $r>0$ such
each $\Xi(\eta)$ has at most $r$ elements. Since $M$ is Borel in $\partial G\times \Chb(G)$ by Claim~\ref{e} and has
finite sections $M_{\eta}\subseteq \Chb(G)$ of size at most $r$, the Lusin--Novikov
theorem yields Borel functions $\xi_1,\dots,\xi_r\co \partial G\to \Chb(G)$ such that $M$ is the union of
their graphs $G_{\xi_i}:=\{(\eta,\xi_i(\eta))\in\partial G\times \Chb(G)\}$.
\end{definition}

\begin{claim}\label{ku}
For each $i\in\{1,\dots,r\}$, the set $$Q_i:=\{(\eta,g,h)\in\partial G\times G^2 \ | \ h\in Q(g,\xi_i(\eta))\}$$ is Borel in $\partial G\times G^2$.
\end{claim}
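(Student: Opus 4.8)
The goal is to show that the set
$$Q_i=\{(\eta,g,h)\in\partial G\times G^2 \ | \ h\in Q(g,\xi_i(\eta))\}$$
is Borel, and the natural route is to write it as a Borel (in fact analytic, and separately coanalytic) combination of the already-established Borel sets $R$, $F$, $M$ and the Borel functions $\xi_i$. Recall that $h\in Q(g,\xi)$ means precisely that $h$ lies on some CGR $\Gamma\in\CGR(g,\eta)$ with $\xi_\Gamma=\xi$. So the first thing I would do is translate this into a formula: $(\eta,g,h)\in Q_i$ if and only if
$$\exists\Gamma\in G^\NN\ \big[(\eta,g,\Gamma)\in R\ \wedge\ \xi_\Gamma=\xi_i(\eta)\ \wedge\ \exists n\in\NN\ \Gamma_n=h\big].$$
The subset $\{(\xi,\Gamma)\in\Chb(G)\times G^\NN \ | \ \xi_\Gamma=\xi\}$ has a Borel definition (exactly as used in the proof of Claim~\ref{e}: $\forall a\in G\ \exists n\ \forall m\geq n\ f_{\Gamma_m}(a)=\xi(a)$), and $\eta\mapsto\xi_i(\eta)$ is Borel, so the predicate ``$\xi_\Gamma=\xi_i(\eta)$'' cuts out a Borel subset of $\partial G\times G^\NN$. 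Combining with $R$ (closed by Claim~\ref{claim:R}) and the condition $\exists n\ \Gamma_n=h$ (a countable union of clopen conditions), the bracketed part is Borel; prefixing the single existential quantifier $\exists\Gamma\in G^\NN$ over an uncountable standard Borel space shows $Q_i$ is \emph{analytic}.

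For coanalyticity, the key observation is that, by hyperbolicity, the existential witness $\Gamma$ can be replaced by a ``for all approximating geodesic segments'' statement, mimicking the coanalytic half of the proof of Claim~\ref{e}. Concretely, by Lemma~\ref{lemma:hyperbolic_basic_prop} any two CGR from $g$ to $\eta$ stay within Hausdorff distance $2\delta$, and a CGR through $h$ exists if and only if $h$ is within $2\delta$ of \emph{every} CGR from $g$ to $\eta$ and, moreover, the ``$h$-side'' sub-bundle converges to $\xi_i(\eta)$. More carefully: $(\eta,g,h)\in Q_i$ iff $h\in\Geo(g,\eta)$ (equivalently $\dc(g,h)+\dc(h,\Lambda_m)=m-\dc(g,\Lambda_0)+\dots$ stabilises along every $\Lambda\in\CGR(g,\eta)$, using Lemma~\ref{lemma:convergence_same_xi}/\ref{lemma:alt_desc_Qxxi}-type criteria) \emph{and} for every $\Lambda\in\CGR(g,\eta)$ one can extract, as in Claim~\ref{e}, a sequence of finite geodesic paths from $g$ through $h$ staying in $N_{2\delta}(\Lambda)$ whose horofunctions converge to $\xi_i(\eta)$. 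Each internal quantifier here ranges over $G$ or $\NN$; the only uncountable quantifier is the universal $\forall\Lambda\in G^\NN$, with the matrix being a countable Boolean combination of closed/open sets. Hence $Q_i$ is coanalytic, and by Souslin's theorem it is Borel.

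\textbf{Main obstacle.} The routine parts — assembling the analytic description and checking that the individual predicates ($R$, $F$, $f_{\Gamma_m}(a)=\xi(a)$, $N_{2\delta}$-containment, $\Gamma_n=h$) are closed/open/Borel in the appropriate spaces — I expect to go through exactly as in Claims~\ref{claim:R}--\ref{e}. The delicate point is the coanalytic reformulation: I must make sure that the ``$\forall\Lambda$, extract approximating segments through $h$'' condition is genuinely \emph{equivalent} to $h\in Q(g,\xi_i(\eta))$ and not merely to $h\in\Geo(g,\eta)$. This is where the earlier lemmas on combinatorial sectors do the work: reorienting a CGR through $h$ and applying Lemma~\ref{lemma:alt_desc_Qxxi} and Lemma~\ref{lemma:convergence_same_xi} shows that, once $h\in\Geo(g,\eta)$, membership in $Q(g,\xi_i(\eta))$ is detected by the limiting horofunction of \emph{any} CGR from $g$ through $h$ towards $\eta$ — and since the space of such CGR is compact (Lemma~\ref{lemma:hyperbolic_subcv_Gamma}), the condition can be checked on finite approximants lying in $N_{2\delta}$ of a fixed $\Lambda$. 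Getting this equivalence stated cleanly, so that the quantifier complexity is visibly $\Pi^1_1$, is the crux; everything else is bookkeeping parallel to Claim~\ref{e}.
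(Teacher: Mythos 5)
Your overall strategy --- give an analytic and a coanalytic description and invoke Souslin --- is the paper's strategy, and your analytic half is essentially identical to the paper's. Where you diverge is the coanalytic half. The paper gets it almost for free from Lemma~\ref{lemma:alt_desc_Qxxi}: since $\bigcup_{n}\Gamma(g,\Lambda_n)=Q(g,\xi_i(\eta))$ for \emph{some, or equivalently every} CGR $\Lambda$ starting at $g$ and converging to $\xi_i(\eta)$, one may simply flip the quantifier: $(\eta,g,h)\in Q_i$ if and only if for \emph{every} $\Lambda\in C$ with $\Lambda_0=g$ and $\xi_\Lambda=\xi_i(\eta)$ there is some $n$ with $h\in\Gamma(g,\Lambda_n)$. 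The matrix here is Borel (Claim~\ref{claim:C}, Borelness of $\xi_i$, and the countable-quantifier description of ``$\xi_\Lambda=\xi$'' that you already use), so the formula is visibly coanalytic, with no compactness, no finite approximants, and no $N_{2\delta}$-neighbourhood argument. Your route through finite geodesic segments in $N_{2\delta}(\Lambda)$ \`a la Claim~\ref{e} can be completed (the extraction of a limit CGR through $h$ is the same device as in Claims~\ref{e} and \ref{claim:P}), but it is substantially heavier than needed; the ``some/every'' duality of Lemma~\ref{lemma:alt_desc_Qxxi} is exactly the clean statement your ``crux'' paragraph was groping for.

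One step in your sketch is genuinely false and must be removed: the parenthetical claim that $h\in\Geo(g,\eta)$ is \emph{equivalent} to the stabilisation of $\dc(g,h)+\dc(h,\Lambda_m)=\dc(g,\Lambda_m)$ along every $\Lambda\in\CGR(g,\eta)$. That condition says $h\in\bigcup_m\Gamma(g,\Lambda_m)=Q(g,\xi_\Lambda)$ for every such $\Lambda$, i.e. $h\in\bigcap_{\xi\in\Xi(\eta)}Q(g,\xi)$, which is in general strictly smaller than $\Geo(g,\eta)$: in Example~\ref{example:bad_ladder} one has $y_1\in\Geo(x_1,\eta)=X^{(0)}$, yet $y_1\notin Q(x_1,\xi_x)=\{x_n \ | \ n\geq 1\}$, so the stabilisation fails along the CGR $(x_n)_{n\geq 1}\in\CGR(x_1,\eta)$. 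If your coanalytic formula used this gloss, it would define a proper subset of $Q_i$. Fortunately the conjunct ``$h\in\Geo(g,\eta)$'' is redundant (your $\forall\Lambda$ approximation condition already forces it in the limit), so you can simply drop it; alternatively, note that it is Borel outright, being a finite union, over the geodesic segments $(\Gamma_0,\dots,\Gamma_n)$ from $g$ to $h$, of the conditions $(\eta,g,(\Gamma_0,\dots,\Gamma_n))\in F$, with $F$ Borel by Claim~\ref{setf}.
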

\begin{proof}
  By Lemma \ref{lemma:alt_desc_Qxxi},
  $Q(g,\xi_i(\eta))=\bigcup_{n\in\N}\Gamma(g,x_n)$ for some, or
  equivalently every CGR $(x_n)_{n\in\N}$ converging to
  $\xi_i(\eta)$. This easily gives both analytic and coanalytic
  definitions of $Q_i$, because $h\in Q(g,\xi_i(\eta))$ if and only if $\exists \Lambda\in C$ (resp. $\forall \Lambda\in C$) with $\Lambda_0=g$ and $\xi_{\Lambda}=\xi_i(\eta)$ and $\exists n\in\NN$ $h\in\Gamma(g,\Lambda_n)$.
 \end{proof}

\begin{claim}\label{claim:P}
The set $$P:=\{(\eta,h)\in\partial G\times G \ | \ \textrm{$h$ is $\eta$-special}\}$$ is Borel in $\partial G\times G$.
\end{claim}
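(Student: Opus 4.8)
The plan is to show that $P$ is both analytic and coanalytic, hence Borel by Souslin's theorem, by directly translating the definition of $\eta$-specialness (Definition~\ref{definition:special}) into a first-order formula whose quantifiers are controlled. By Lemma~\ref{lemma:equivalence_special}, $h$ is $\eta$-special if and only if there exists a straight CGR $\Gamma\in\CGR(h,\eta)$, and by Lemma~\ref{lemma:straight_equival}(1)$\Leftrightarrow$(2), $\Gamma$ is straight if and only if $\Gamma\subseteq Q(h,\xi)$ for every $\xi\in\Xi(\eta)$. Since $\Xi(\eta)$ is finite and covered by the Borel functions $\xi_1,\dots,\xi_r$, the condition ``$\Gamma\subseteq Q(h,\xi)$ for all $\xi\in\Xi(\eta)$'' becomes ``for all $i\in\{1,\dots,r\}$ with $(\eta,\xi_i(\eta))\in M$, we have $\Gamma_n\in Q(h,\xi_i(\eta))$ for all $n\in\NN$'' — a conjunction over the \emph{finite} set $\{1,\dots,r\}$ and the \emph{countable} set $\NN$ of the Borel conditions supplied by Claims~\ref{claim:R}, \ref{e} and \ref{ku}.

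First I would write the analytic definition: $(\eta,h)\in P$ if and only if
$$\exists \Gamma\in G^{\NN}\ \Big((\eta,h,\Gamma)\in R \ \textrm{and} \ \forall i\in\{1,\dots,r\}\ \forall n\in\NN\ (\eta,h,\Gamma_n)\in Q_i\Big).$$
Here the single existential quantifier $\exists\Gamma\in G^{\NN}$ ranges over an uncountable standard Borel space; everything following it is a Borel condition on $(\eta,h,\Gamma)$ (using that $R$ is closed by Claim~\ref{claim:R}, each $Q_i$ is Borel by Claim~\ref{ku}, and the outer quantifiers $\forall i\leq r$, $\forall n\in\NN$ range over countable sets), so $P$ is analytic. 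Strictly speaking one should also intersect with the condition ``$\xi_i(\eta)\in\Xi(\eta)$'', i.e. $(\eta,\xi_i(\eta))\in M$, for the indices $i$ that are actually relevant; but including a redundant $Q_i$-condition (for an index $i$ with $\xi_i(\eta)\notin\Xi(\eta)$) only makes the requirement on $\Gamma$ stronger, so one must instead restrict the conjunction to those $i$ with $(\eta,\xi_i(\eta))\in M$, which is a Borel condition on $\eta$ by Claim~\ref{e}. This bookkeeping is routine.

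For the coanalytic side, I would dualise: $(\eta,h)\in P$ if and only if there does \emph{not} exist a CGR from $h$ to $\eta$ that leaves some $Q(h,\xi_i(\eta))$, $\xi_i(\eta)\in\Xi(\eta)$; equivalently, recalling Lemma~\ref{lemma:straight_equival}(1)$\Leftrightarrow$(3) and Corollary~\ref{corollary:special_exist} (every CGR is eventually straight, so it suffices to test finitely many $Q$'s using the reorientation lemmas), one can phrase specialness as a universally quantified statement. Concretely, I would use that, by Corollary~\ref{corollary:special_exist} and Lemma~\ref{lemma:equivalence_special}, $h$ is $\eta$-special if and only if for every $\Gamma\in\CGR(h,\eta)$ there is some $N$ with $(\Gamma_n)_{n\geq N}$ straight \emph{and} $h\in\Gamma(h,\Gamma_N)$ together with $\Gamma_N\in\bigcap_i Q(h,\xi_i(\eta))$ — more simply, $(\eta,h)\in P$ iff $h\in Q(h,\xi_i(\eta))$ realised by a CGR lying in all the $Q$'s, which by the contrapositive reads: $\forall\Gamma\in G^{\NN}$, if $(\eta,h,\Gamma)\in R$ and $\Gamma$ exits some $Q(h,\xi_i(\eta))$ with $(\eta,\xi_i(\eta))\in M$, then [vacuous] — this is the wrong direction. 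The clean coanalytic definition is rather: $(\eta,h)\notin P$ iff $\forall\Gamma\in\CGR(h,\eta)$ $\exists i\leq r$ with $(\eta,\xi_i(\eta))\in M$ and $\exists n$ with $(\eta,h,\Gamma_n)\notin Q_i$; this has a single universal quantifier over the uncountable space $G^{\NN}$ followed by Borel conditions (the inner $\exists i\leq r$ and $\exists n\in\NN$ are over countable sets), so $P\setminus(\partial G\times G)$'s complement is analytic, i.e. $P$ is coanalytic.

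The main obstacle is the first one flagged above: making sure that when we quantify over the fixed functions $\xi_1,\dots,\xi_r$ rather than over the genuine (a priori not obviously Borel) set $\Xi(\eta)$, we correctly pick out exactly the indices $i$ with $\xi_i(\eta)\in\Xi(\eta)$, using the Borelness of $M$ from Claim~\ref{e}; getting the logic right so that a ``straight'' witness is required to lie in \emph{all} the relevant sectors and in no spurious one. Once that is handled, both the analytic and coanalytic definitions are immediate from Claims~\ref{claim:R}, \ref{setf}, \ref{e} and \ref{ku}, and Souslin's theorem finishes the proof.
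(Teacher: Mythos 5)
Your analytic half is essentially right (and matches the spirit of the paper's definitions), but the coanalytic half contains a genuine logical error. The formula you settle on — $(\eta,h)\notin P$ iff $\forall\Gamma\in G^{\NN}$, if $(\eta,h,\Gamma)\in R$ then $\exists i\leq r\ \exists n\in\NN$ with $(\eta,h,\Gamma_n)\notin Q_i$ — is just the negation of your analytic definition. A single universal quantifier over the uncountable space $G^{\NN}$ with a Borel matrix shows that the set so defined (the \emph{complement} of $P$) is coanalytic, i.e.\ that $P$ is analytic, which you already knew; it does \emph{not} show that $P$ is coanalytic. For that you would need to exhibit the complement of $P$ as analytic, equivalently define $P$ itself by a universal quantifier over the uncountable space followed by Borel conditions, and no such definition appears: your attempt via Corollary~\ref{corollary:special_exist} is abandoned midway (``this is the wrong direction''). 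Incidentally, the issue you flag as the ``main obstacle'' is vacuous: since $M$ is the union of the graphs of $\xi_1,\dots,\xi_r$, each graph is contained in $M$, so $\xi_i(\eta)\in\Xi(\eta)$ for every $i$ and every $\eta$, and the conjunction over all $i\leq r$ already ranges exactly over $\Xi(\eta)$ — no bookkeeping is needed (the paper quantifies over all $i\leq r$ without restriction).

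The paper sidesteps the analytic/coanalytic dichotomy entirely by quantifying only over countable sets: $h$ is $\eta$-special if and only if for every $n\in\NN$ there exists a finite tuple $\Gamma^n\in G^n$ with $(\eta,h,\Gamma^n)\in F$ (Claim~\ref{setf}) all of whose entries lie in $Q(h,\xi_i(\eta))$ for every $i\leq r$ (Claim~\ref{ku}). The only nontrivial point is that finite approximations suffice: by local finiteness the segments $\Gamma^n$ subconverge to an infinite geodesic ray from $h$, which by Lemma~\ref{lemma:hyperbolic_basic_prop} is again a CGR to $\eta$ and which lies in all the sectors, so $h$ is indeed $\eta$-special. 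This K\"onig's-lemma-style replacement of the existential quantifier over $G^{\NN}$ by countable quantifiers over $G^{<\NN}$ is exactly the idea missing from your argument; it is also what you would need to produce a correct coanalytic definition if you insisted on the Souslin route — and once you have it, the set is Borel outright and Souslin's theorem becomes unnecessary.
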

\begin{proof}
  This follows from the definition of special vertices (see Definition~\ref{definition:special}) and
  the fact that the sets $F$ and $Q_i$ are Borel (see Claims~\ref{setf} and \ref{ku}). Indeed, $h$ is $\eta$-special
  if and only if 
  $$\forall n\in\NN \ \exists \Gamma^n\in G^n \ (\eta,h,\Gamma^n)\in F \ \textrm{and} \ \forall i\leq r \ \forall k<n\ \Gamma^n_k\in Q(h,\xi_i(\eta)).$$ The ``if'' in the above equivalence
  follows by local finiteness of $X$, since the sequence $(\Gamma^n)_{n\in\NN}$ with $(\eta,h,\Gamma^n)\in F$ will subconverge to some $\Gamma\in\CGR(h,\eta)$.
\end{proof}

\begin{claim}\label{pjeden}
  The set $$P_1:=\{(\xi,\eta,h)\in\Chb(G)\times\partial G\times G \ | \  \textrm{$h$ is $\eta$-special and $\xi=\xi_{h,\eta}$}\}$$ is Borel in $\Chb(G)\times\partial G\times G$.
\end{claim}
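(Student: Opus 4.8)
The plan is to show that $P_1$ is both analytic and coanalytic, hence Borel by Souslin's theorem. Recall from Definition~\ref{definition:special} and Lemma~\ref{lemma:xixeta} that for an $\eta$-special vertex $h$, the element $\xi_{h,\eta}$ is the unique $\xi\in\Xi(\eta)$ with $Q(h,\xi)=\bigcap_{\xi'\in\Xi(\eta)}Q(h,\xi')$, and that $\eta$-specialness of $h$ (recorded by the Borel set $P$ of Claim~\ref{claim:P}) says precisely that this intersection contains a CGR from $h$ to $\eta$. So $(\xi,\eta,h)\in P_1$ if and only if $(\eta,h)\in P$, $(\eta,\xi)\in M$, and $Q(h,\xi)\subseteq Q(h,\xi_i(\eta))$ for every $i\in\{1,\dots,r\}$ (the reverse inclusion being automatic since $\xi_{h,\eta}=\xi_j(\eta)$ for some $j$, so $Q(h,\xi_{h,\eta})=\bigcap_i Q(h,\xi_i(\eta))\subseteq Q(h,\xi)$ forces equality once $\xi\in\Xi(\eta)$; alternatively one may simply demand $Q(h,\xi)=Q(h,\xi_i(\eta))$ for some $i$ and invoke Lemma~\ref{lemma:uniqueness_xi_Q}).

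First I would make the containment condition explicit. Since $Q(h,\xi)=\bigcup_{n}\Gamma(h,\Lambda_n)$ for any (equivalently, some) $\Lambda\in\CGR(h,\eta)$ converging to $\xi$ (Lemma~\ref{lemma:alt_desc_Qxxi}), and $\Gamma(h,\Lambda_n)$ is determined by the finitely many vertices within distance $\dc(h,\Lambda_n)$ of $h$, the condition ``$Q(h,\xi)=Q(h,\xi_i(\eta))$'' can be rewritten using the Borel sets $Q_i$ of Claim~\ref{ku} together with the Borel set $M$: it asserts $\forall g\in G$ $\big((\eta,h,g)\in\widetilde Q_\xi \iff (\eta,h,g)\in Q_i\big)$, where $\widetilde Q_\xi:=\{(\eta,h,g)\mid g\in Q(h,\xi)\}$. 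The only subtlety is that $\xi$ here is a genuine variable in $\Chb(G)$ (not one of the Borel selectors $\xi_i$), so I first check directly, exactly as in the proof of Claim~\ref{ku}, that $\widetilde Q:=\{(\eta,\xi,h,g)\in\partial G\times\Chb(G)\times G^2\mid g\in Q(h,\xi)\}$ is Borel: indeed $g\in Q(h,\xi)$ iff $\exists\Lambda\in C$ (resp. $\forall\Lambda\in C$) with $\Lambda_0=h$, $\xi_\Lambda=\xi$, and $\exists n\ g\in\Gamma(h,\Lambda_n)$, using Claims~\ref{claim:C} and \ref{claim:R} and the Borelness of $\{(\xi,\Lambda):\xi_\Lambda=\xi\}$ noted in Claim~\ref{e}; this gives both an analytic and a coanalytic description.

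With $\widetilde Q$, $M$, $P$ and the $Q_i$ all Borel in hand, the conclusion is a bookkeeping exercise: $(\xi,\eta,h)\in P_1$ iff
\[
(\eta,h)\in P \ \wedge\ (\eta,\xi)\in M\ \wedge\ \exists i\le r\ \forall g\in G\ \big((\eta,\xi,h,g)\in\widetilde Q \iff (\eta,h,g)\in Q_i\big),
\]
which is a countable combination of Borel conditions with only quantifiers over the countable set $G$ and the finite set $\{1,\dots,r\}$, hence Borel. (One could equally well phrase it with the intersection directly: $\forall g\in G$ $\big((\eta,\xi,h,g)\in\widetilde Q \iff \forall i\le r\ (\eta,h,g)\in Q_i\big)$, after noting that for $\eta$-special $h$ the intersection $\bigcap_i Q(h,\xi_i(\eta))$ equals $Q(h,\xi_{h,\eta})$ and uniqueness from Lemma~\ref{lemma:uniqueness_xi_Q} pins down $\xi$.)

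I do not expect a serious obstacle here; the only point requiring a little care is the one already flagged, namely that $\xi$ ranges over the uncountable space $\Chb(G)$, so the quantifier-counting must route through a fresh verification that $\widetilde Q$ (the ``$Q$'' relation with its $\Chb(G)$-coordinate made into a variable) is Borel, rather than merely reusing the Borelness of the $Q_i$ attached to the measurable selectors $\xi_i$. Once that is done, everything else is a finite/countable Boolean combination of sets already shown to be Borel in Claims~\ref{claim:C}--\ref{claim:P}.
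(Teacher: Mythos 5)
Your overall strategy (reduce everything to the Borel sets $P$, $M$ and the $Q_i$) is the right one, but two points in the write-up do not work as stated. First, your main displayed formula does not define $P_1$: given the conjunct $(\eta,\xi)\in M$, we already have $\xi=\xi_j(\eta)$ for some $j\le r$, so the clause ``$\exists i\le r\ \forall g\in G\ \big((\eta,\xi,h,g)\in\widetilde Q \iff (\eta,h,g)\in Q_i\big)$'' is automatically satisfied (take $i=j$) and encodes no minimality at all; the formula as displayed defines the set of triples with $h$ $\eta$-special and $\xi\in\Xi(\eta)$, which strictly contains $P_1$ whenever $\Xi(\eta)$ has more than one element. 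The correct conditions are the ones you state in prose, namely $Q(h,\xi)\subseteq Q(h,\xi_j(\eta))$ for \emph{every} $j\le r$, or equivalently $Q(h,\xi)=\bigcap_{j\le r}Q(h,\xi_j(\eta))$ together with Lemma~\ref{lemma:uniqueness_xi_Q}; your parenthetical ``demand $Q(h,\xi)=Q(h,\xi_i(\eta))$ for some $i$'' has the same defect as the display.

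Second, the key technical step, the Borelness of $\widetilde Q=\{(\eta,\xi,h,g)\ |\ g\in Q(h,\xi)\}$ with $\xi$ ranging over all of $\Chb(G)$, is not established by the argument ``exactly as in Claim~\ref{ku}''. The equivalence of the existential and universal descriptions there rests on the existence of at least one CGR from $h$ converging to the given horofunction, which is automatic for $\xi_i(\eta)\in\Xi(\eta)$ (Lemma~\ref{lemma:preparation}) but fails for an arbitrary $\xi\in\Chb(G)$: at a horofunction that is the limit of no CGR, the universal formula is vacuously true while the existential one is false, so the two formulas define \emph{different} sets and neither is shown to be both analytic and coanalytic. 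Moreover, since your formula uses $\widetilde Q$ inside a biconditional (hence both positively and negatively), you cannot simply use one description for analyticity and the other for coanalyticity of $P_1$. The repair is short: either build the conjunct $(\eta,\xi)\in M$ into the definition (on the Borel set $M$ the two descriptions do coincide, by Lemmas~\ref{lemma:preparation}, \ref{lemma:convergence_same_xi} and \ref{lemma:alt_desc_Qxxi}, so the restricted relation is Borel and suffices for your formula), or avoid a free $\xi$-variable altogether as the paper does: $(\xi,\eta,h)\in P_1$ iff $(\eta,h)\in P$ and $\exists i\le r$ with $(\eta,\xi)\in G_{\xi_i}$ and $\forall j\le r$, $Q(h,\xi_i(\eta))\subseteq Q(h,\xi_j(\eta))$, which uses only the selector graphs $G_{\xi_i}$ and the Borel sets $Q_i$ of Claim~\ref{ku} and requires no new analytic/coanalytic computation.
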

\begin{proof}
Note that $(\xi,\eta,h)\in P_1$ if and only if $(\eta,h)\in P$ and $\exists i\leq r$ $(\eta,\xi)\in G_{\xi_i}$ and
  $\forall j\leq r$ $Q(h,\xi_i(\eta))\subseteq
  Q(h,\xi_j(\eta))$.
Hence $P_1$ is Borel by Claims~\ref{ku} and \ref{claim:P}.
 \end{proof}

\begin{claim}\label{igrek}
The set $$L:=\{(h,\xi,\eta)\in G\times\Chb(G)\times\partial G \ | \ \textrm{$h\in Y(e,\xi)$,  $\xi\in\Xi(\eta)$}\}$$ is Borel in $G\times\Chb(G)\times\partial G$.
\end{claim}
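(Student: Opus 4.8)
The plan is to show that $L$ is Borel by unwinding the definition of $Y(e,\xi)$ and expressing membership in $L$ via quantifiers over countable sets together with the already-established Borel sets. Recall that $Y(e,\xi)$ is the set of $h\in\Geo(e,\eta)\cap\Xs$ with $\xi_{h,\eta}=\xi$ and $\dc(e,h)$ minimal among all vertices with these properties. So $(h,\xi,\eta)\in L$ if and only if: (a) $\xi\in\Xi(\eta)$, i.e. $(\eta,\xi)\in M$; (b) $h\in\Geo(e,\eta)$, i.e. there exists a CGR $\Lambda$ from $e$ to $\eta$ passing through $h$; (c) $h$ is $\eta$-special with $\xi_{h,\eta}=\xi$, i.e. $(\xi,\eta,h)\in P_1$; and (d) for every $h'\in G$ with $\dc(e,h')<\dc(e,h)$ (equivalently $h'<h$ in the fixed total order, using the monotonicity assumption on $\leq$), we have that $h'\notin\Geo(e,\eta)$ or $h'$ is not $\eta$-special or $\xi_{h',\eta}\neq\xi$ — i.e. $\neg\big(h'\in\Geo(e,\eta)\wedge (\xi,\eta,h')\in P_1\big)$. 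Note that condition (c) already forces $h\in\Xs$ and $\xi=\xi_{h,\eta}$, and combined with (a) gives $\xi\in\Xi(\eta)$ automatically, but keeping (a) explicit does no harm.

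First I would record a Borel description of the ``$h\in\Geo(e,\eta)$'' predicate. The set $\Geo:=\{(\eta,g,h)\in\partial G\times G^2 \mid h\in\Geo(g,\eta)\}$ is Borel: indeed, $h\in\Geo(g,\eta)$ if and only if $\exists \Lambda\in G^{\NN}$ with $(\eta,g,\Lambda)\in R$ and $\exists n\in\NN$ with $\Lambda_n=h$ — this is analytic; and it is also coanalytic since, by local finiteness together with Lemma~\ref{lemma:hyperbolic_subcv_Gamma}, $h\in\Geo(g,\eta)$ if and only if for every CGR $\Lambda\in C_g$ one can find (subconverging) geodesic paths in $N_{2\delta}$-type neighbourhoods realising a CGR through $h$ — more simply, since each $C_g$ is compact (Claim~\ref{claim:C}) and the condition ``$\Lambda\in C_g$, $\xi_\Lambda$ arbitrary, $\exists n \ \Lambda_n=h$'' describes an analytic set whose projection along the compact fibre $C_g$ is closed in $\eta$ for fixed $g,h$, one gets Borelness directly as in Claim~\ref{setf}. (Alternatively, $\Geo$ is Borel because $h\in\Geo(g,\eta)$ iff $\exists i\le r$ with $h\in Q(g,\xi_i(\eta))$ after redirecting through a special vertex, via Lemma~\ref{lemma:redicrect_xetaxi} — but the direct argument is cleaner.)

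With the Borelness of $M$ (Claim~\ref{e}), $P_1$ (Claim~\ref{pjeden}) and $\Geo$ in hand, the formula
$$(h,\xi,\eta)\in L \iff (\eta,\xi)\in M \ \wedge\ (\eta,e,h)\in\Geo \ \wedge\ (\xi,\eta,h)\in P_1 \ \wedge\ \forall h'\in G\ \big(h'\ge h \ \vee\ (\eta,e,h')\notin\Geo \ \vee\ (\xi,\eta,h')\notin P_1\big)$$
exhibits $L$ as obtained from Borel sets using finite Boolean combinations and a single quantifier $\forall h'\in G$ ranging over the \emph{countable} set $G$; hence $L$ is Borel. The main obstacle — such as it is — is confirming the Borelness of the $\Geo$ predicate, since unlike $Q_i$ (Claim~\ref{ku}) the ``obvious'' definition is only visibly analytic; but this is handled exactly as in Claim~\ref{setf}/Claim~\ref{claim:P} by exploiting compactness of the fibres $C_g$ (Claim~\ref{claim:C}) and $\delta$-hyperbolicity (Lemma~\ref{lemma:hyperbolic_basic_prop}) to produce a coanalytic definition as well, and then invoking Souslin's theorem. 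Everything else is a routine manipulation of quantifiers over the countable set $G$.
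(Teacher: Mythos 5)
Your decomposition is essentially the paper's: the paper proves this claim by writing $(h,\xi,\eta)\in L$ iff $(\eta,\xi)\in M$, $h\in\Geo(e,\eta)$, $(\xi,\eta,h)\in P_1$, and $\dc(e,h)$ is minimal for these properties, invoking Claims~\ref{setf}, \ref{e} and \ref{pjeden}; your conditions (a)--(d) are exactly this. Two remarks. First, your detour on the Borelness of the predicate $h\in\Geo(e,\eta)$ is more elaborate than necessary: since any $\Lambda\in\CGR(e,\eta)$ through $h$ passes through $h$ at time exactly $\dc(e,h)$, one has $h\in\Geo(e,\eta)$ iff $\exists (\Gamma_0,\dots,\Gamma_n)\in G^{<\NN}$ with $(\eta,e,(\Gamma_0,\dots,\Gamma_n))\in F$ and $\Gamma_n=h$, a countable existential applied to the Borel set $F$ of Claim~\ref{setf} --- which is precisely why the paper cites that claim; no separate analytic/coanalytic argument is needed (your compactness argument can be made to work, but as written the step ``projection is closed'' tacitly uses that $\exists n\,\Lambda_n=h$ is equivalent to the clopen condition $\Lambda_{\dc(e,h)}=h$ on geodesics). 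Second, there is a genuine slip in your displayed formula: minimality in the definition of $Y(e,\xi)$ is minimality of $\dc(e,h)$, not $\leq$-minimality of $h$, and the two are not equivalent --- $h'<h$ only gives $\dc(e,h')\leq\dc(e,h)$, so when several vertices of $\Geo(e,\eta)\cap\Xs$ with $\xi_{\cdot,\eta}=\xi$ realise the minimal distance, your clause ``$h'\geq h\ \vee\ \cdots$'' excludes all but the $\leq$-least of them and thus defines a proper subset of $L$. The fix is immediate and costs nothing: replace ``$h'\geq h$'' by ``$\dc(e,h')\geq\dc(e,h)$'' (your prose version (d) already states the correct condition), and the quantifier $\forall h'\in G$ still ranges over a countable set, so Borelness follows as you intend.
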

\begin{proof}
Note that $(h,\xi,\eta)\in L$ if and only if 
$(\eta,\xi)\in M$ and $h\in\Geo(e,\eta)$ and $(\xi,\eta,h)\in P_1$ and $\dc(e,h)$ is minimal for these properties. Hence $L$ is Borel by Claims~\ref{setf}, \ref{e} and \ref{pjeden}.
\end{proof}

\begin{claim}\label{claim:B}
The set $$B:=\{(g,h,\xi,\eta)\in G^2\times\Chb(G)\times\partial G \ | \ \textrm{$g\in Q(h,\xi)$, $h\in Y(e,\xi)$, $\xi\in\Xi(\eta)$}\}$$ is Borel in $G^2\times\Chb(G)\times\partial G $.
\end{claim}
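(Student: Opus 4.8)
The plan is to show that $B$ is Borel by expressing membership in $B$ as a finite conjunction of conditions, each of which has already been shown to define a Borel set in a suitable standard Borel space. Concretely, $(g,h,\xi,\eta)\in B$ if and only if the three conditions ``$g\in Q(h,\xi)$'', ``$h\in Y(e,\xi)$'' and ``$\xi\in\Xi(\eta)$'' all hold; the last two together are exactly the condition $(h,\xi,\eta)\in L$, which is Borel by Claim~\ref{igrek}. So the only extra ingredient is to handle the predicate ``$g\in Q(h,\xi)$'' jointly with the information that $\xi$ is one of the $\xi_i(\eta)$.

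First I would note that since $(h,\xi,\eta)\in L$ forces $\xi\in\Xi(\eta)=M_\eta$, and since $M$ is the union of the graphs $G_{\xi_i}$, the point $\xi$ must equal $\xi_i(\eta)$ for some $i\in\{1,\dots,r\}$. Hence I would write
\begin{align*}
(g,h,\xi,\eta)\in B \iff (h,\xi,\eta)\in L \ \textrm{and} \ \exists i\leq r \ \big((\eta,\xi)\in G_{\xi_i} \ \textrm{and} \ (\eta,h,g)\in Q_i\big).
\end{align*}
Each $G_{\xi_i}$ is Borel in $\partial G\times\Chb(G)$ (being the graph of a Borel function), each $Q_i$ is Borel by Claim~\ref{ku}, $L$ is Borel by Claim~\ref{igrek}, and the existential quantifier $\exists i\leq r$ ranges over a finite set, so it only produces a finite union. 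Pulling all these sets back along the obvious coordinate projections from $G^2\times\Chb(G)\times\partial G$ and intersecting (and taking the finite union over $i$) yields $B$ as a Boolean combination of Borel sets, hence Borel.

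The only point requiring a small amount of care is the bookkeeping of which coordinates each previously-defined Borel set lives on: $Q_i$ was defined as a subset of $\partial G\times G^2$ with coordinates $(\eta,g,h)$ ordered so that the ``source'' vertex comes first, so to express ``$g\in Q(h,\xi_i(\eta))$'' one must use the triple $(\eta,h,g)\in Q_i$ (source $h$, target $g$), and similarly $L$, $G_{\xi_i}$ must be reindexed to the coordinates of the ambient space $G^2\times\Chb(G)\times\partial G$. I expect this reindexing — rather than any genuine descriptive-set-theoretic difficulty — to be the only thing to watch; once the coordinates are matched up correctly, Borelness of $B$ is immediate from Claims~\ref{ku} and \ref{igrek} together with the Borelness of the $G_{\xi_i}$.
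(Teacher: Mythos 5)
Your proof is correct and is essentially the paper's own argument: the paper also writes $(g,h,\xi,\eta)\in B$ iff $\exists i\leq r$ with $\xi=\xi_i(\eta)$, $(\eta,h,g)\in Q_i$ and $(h,\xi,\eta)\in L$, and concludes Borelness from Claims~\ref{ku} and \ref{igrek}. Your extra remarks on matching the coordinate order of $Q_i$ (source $h$ first, so one checks $(\eta,h,g)\in Q_i$) are exactly the right bookkeeping and agree with what the paper does implicitly.
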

\begin{proof}
Note that $(g,h,\xi,\eta)\in B$ if and only if $\exists i\leq r$ $\xi=\xi_i(\eta)$ and $(\eta,h,g)\in Q_i$ and $(h,\xi,\eta)\in L$. Hence $B$ is Borel by Claims~\ref{ku} and \ref{igrek}.
\end{proof}

\begin{claim}\label{geo1}
The set $$A:=\{(\eta,g)\in\partial G\times G \ | \ g\in \mathrm{Geo}_1(e,\eta)\}$$ is Borel in $\partial G\times G$.
\end{claim}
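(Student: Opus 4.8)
The plan is to unwind the definition of $\mathrm{Geo}_1(e,\eta)$ in terms of the objects whose Borelness has already been established in Claims~\ref{ku}, \ref{igrek} and \ref{claim:B}. Recall that by definition
$$\mathrm{Geo}_1(e,\eta)=\bigcup_{\xi\in\Xi(\eta)}\bigcup_{h\in Y(e,\xi)}Q(h,\xi),$$
so $(\eta,g)\in A$ if and only if there exist $\xi\in\Chb(G)$ and $h\in G$ with $\xi\in\Xi(\eta)$, $h\in Y(e,\xi)$, and $g\in Q(h,\xi)$; in the notation of Claim~\ref{claim:B}, this says precisely that $\exists h\in G\ \exists\xi\in\Chb(G)$ such that $(g,h,\xi,\eta)\in B$.

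First I would observe that this displays $A$ as the projection of the Borel set $B$ along the two coordinates $h\in G$ and $\xi\in\Chb(G)$. The projection along $h\in G$ is a countable union (over $g\in G$), hence harmless — it keeps the set Borel. The projection along $\xi\in\Chb(G)$ is over an uncountable standard Borel space, so it only gives an analytic set in general; the point is that the fibre is finite. Indeed, for fixed $(\eta,g,h)$ the set of $\xi$ with $(g,h,\xi,\eta)\in B$ is contained in $\Xi(\eta)$, which has at most $r$ elements by Proposition~\ref{prop:locfin_hyperb_implies_finite}; equivalently, using the Borel uniformizations $\xi_1,\dots,\xi_r$, every such $\xi$ equals $\xi_i(\eta)$ for some $i\le r$. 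So in fact I can eliminate the quantifier over $\Chb(G)$ entirely and write
$$A=\Big\{(\eta,g)\in\partial G\times G \ \Big| \ \exists h\in G \ \exists i\le r \ \big(g,h,\xi_i(\eta),\eta\big)\in B\Big\},$$
which is a countable union (over $h\in G$) of finite unions (over $i\le r$) of Borel sets — each set $\{(\eta,g) : (g,h,\xi_i(\eta),\eta)\in B\}$ being the preimage of $B$ under the Borel map $(\eta,g)\mapsto(g,h,\xi_i(\eta),\eta)$. Hence $A$ is Borel.

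I do not expect any genuine obstacle here: the only subtlety, as in Claims~\ref{ku}--\ref{claim:B}, is to make sure the sole uncountable-range quantifier that appears ($\exists\xi\in\Chb(G)$) is removed or shown to have uniformly finite fibres, which is exactly what the Lusin--Novikov uniformization $\xi_1,\dots,\xi_r$ of $M$ accomplishes. Alternatively, if one prefers to bypass the $\xi_i$, one can note that the $\xi$-sections of $B$ are finite of size at most $r$ and invoke the Lusin--Novikov theorem directly to conclude that the projection of $B$ forgetting $\xi$ is Borel; then a further (countable) projection forgetting $h$ finishes the proof. Either way the statement follows, and this claim closes the chain of Borelness verifications needed to carry out the constructions of Definition~\ref{definition:Ceta} in a Borel fashion.
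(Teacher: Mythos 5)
Your argument is correct, and the second route you sketch (observing that the sections of $B$ over $(\eta,g)$ are finite and applying the Lusin--Novikov theorem to the projection of $B$ onto $\partial G\times G$) is exactly the paper's proof. Your primary phrasing is only a cosmetic variant: instead of invoking Lusin--Novikov a second time, you reuse the uniformizing Borel functions $\xi_1,\dots,\xi_r$ of $M$ to eliminate the uncountable quantifier over $\Chb(G)$, writing $A$ as a countable union over $h\in G$ and $i\leq r$ of preimages of $B$ under the Borel maps $(\eta,g)\mapsto(g,h,\xi_i(\eta),\eta)$; both versions rest on the same two facts (Borelness of $B$ and finiteness of $\Xi(\eta)$ together with the finiteness of each $Y(e,\xi)$), so there is no gap.
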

\begin{proof}
 Recall that
  $$\Geo_1(e,\eta):=\bigcup_{\xi\in\Xi(\eta)}\bigcup_{h\in
    Y(e,\xi)}Q(h,\xi),$$ so that $(\eta,g)\in A$ if and only if $\exists \xi\in\Xi(\eta)$ $\exists h\in Y(e,\xi)$ $(g,h,\xi,\eta)\in B$. In other words, $A$ is obtained as the
  projection on $\partial G\times G$ of the Borel set $B$ (see Claim~\ref{claim:B}). However, vertical sections of $B$ are
  finite by Proposition
  \ref{prop:locfin_hyperb_implies_finite}. Thus, $A$ is
  Borel by the Lusin--Novikov theorem.
\end{proof}

\begin{claim}\label{claim:D}
The set $$D:=\{(\eta,(\Gamma_0,\ldots,\Gamma_n))\in\partial G\times G^{<\NN} \ | \ \textrm{$\Gamma_0\in\mathrm{Geo}_1(e,\eta)$ and $\Gamma\in\CGR(\Gamma_0,\eta)$}\}$$ is Borel in $\partial G\times G^{<\NN}$.
\end{claim}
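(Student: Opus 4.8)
The plan is to express membership in $D$ as a first-order formula all of whose quantifiers range over countable sets and whose atomic predicates are Borel, so that $D$ is automatically Borel by the discussion in \S\ref{sec:borel-equiv-relat}. Observe that a pair $(\eta,(\Gamma_0,\dots,\Gamma_n))\in\partial G\times G^{<\NN}$ belongs to $D$ if and only if $(\eta,\Gamma_0)\in A$ and $(\eta,\Gamma_0,(\Gamma_0,\dots,\Gamma_n))\in F$, where $A$ is the Borel set of Claim~\ref{geo1} and $F$ is the Borel set of Claim~\ref{setf}. Indeed, the first condition says exactly that $\Gamma_0\in\Geo_1(e,\eta)$, and the second says exactly that the finite tuple $(\Gamma_0,\dots,\Gamma_n)$ is an initial segment of some CGR in $\CGR(\Gamma_0,\eta)$.

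Concretely, I would write
\begin{align*}
D=\{(\eta,(\Gamma_0,\dots,\Gamma_n))\in\partial G\times G^{<\NN} \ | \ \ & (\eta,\Gamma_0)\in A \textrm{ and }(\eta,\Gamma_0,(\Gamma_0,\dots,\Gamma_n))\in F\}
\end{align*}
and note that this exhibits $D$ as (the preimage, under the obvious coordinate-rearranging Borel maps, of) an intersection of two Borel sets, hence Borel. The coordinate map $(\eta,(\Gamma_0,\dots,\Gamma_n))\mapsto(\eta,\Gamma_0)$ from $\partial G\times G^{<\NN}$ to $\partial G\times G$ is Borel (it is continuous on each clopen piece $\partial G\times G^{n}$), and similarly $(\eta,(\Gamma_0,\dots,\Gamma_n))\mapsto(\eta,\Gamma_0,(\Gamma_0,\dots,\Gamma_n))$ into $\partial G\times G\times G^{<\NN}$ is Borel; pulling back $A$ and $F$ along these maps and intersecting gives $D$.

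There is essentially no obstacle here: the content was already done in Claims~\ref{setf} and \ref{geo1}, and this claim is just their conjunction together with the trivial remark that $\Geo_1$ depends on $\eta$ only through the first coordinate $\Gamma_0$ of the tuple. The only point requiring the slightest care is bookkeeping the ambient standard Borel spaces so that $A$ and $F$ are pulled back correctly, which is routine.
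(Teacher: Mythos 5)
Your proof is correct and is essentially the same as the paper's: the paper likewise writes $(\eta,(\Gamma_0,\dots,\Gamma_n))\in D$ if and only if $(\eta,\Gamma_0,(\Gamma_0,\dots,\Gamma_n))\in F$ and $(\eta,\Gamma_0)\in A$, and invokes Claims~\ref{setf} and \ref{geo1}. Your extra remarks about the Borel coordinate-rearranging maps are just the bookkeeping the paper leaves implicit.
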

\begin{proof}
Note that $(\eta,(\Gamma_0,\ldots,\Gamma_n))\in D$ if and only if $(\eta,\Gamma_0,(\Gamma_0,\ldots,\Gamma_n))\in F$ and $(\eta,\Gamma_0)\in A$. Hence $D$ is Borel by Claims~\ref{setf} and \ref{geo1}.
\end{proof}

Recall from Definition~\ref{definition:Ceta} the definition of the sequences $s^\eta_n$.
\begin{claim}\label{setd}
 For each $n\in\NN$, the set $$S_n:=\{(\eta,s^n)\in\partial G\times S^n \ | \ s^n=s^\eta_n \}$$ is Borel in $\partial G\times S^n$. 
\end{claim}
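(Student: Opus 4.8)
The plan is to realise the graph $S_n$ of the map $\eta\mapsto s^\eta_n$ as a Borel combination of sets already shown to be Borel, the essential input being the set $D$ from Claim~\ref{claim:D}. The first step is to encode the sets $C^\eta$ uniformly in $\eta$: I would show that
\[
\widetilde{C}_n:=\{(\eta,g,s)\in\partial G\times G\times S^n \ | \ (g,s)\in C^\eta\}
\]
is Borel in $\partial G\times G\times S^n$. For $g\in G$ and $s=(s_0,\dots,s_{n-1})\in S^n$, the unique path of length $n$ starting at $g$ with type $s$ has consecutive vertices $g,gs_0,gs_0s_1,\dots,gs_0\cdots s_{n-1}$; hence the map
\[
\psi_n\co\partial G\times G\times S^n\to\partial G\times G^{<\NN}:(\eta,g,s)\mapsto\bigl(\eta,(g,gs_0,gs_0s_1,\dots,gs_0\cdots s_{n-1})\bigr)
\]
is continuous (being the identity on the only non-discrete coordinate $\partial G$), and directly from Definition~\ref{definition:Ceta} one checks $\widetilde{C}_n=\psi_n^{-1}(D)$. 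Thus $\widetilde{C}_n$ is Borel by Claim~\ref{claim:D}. I would also record here that $s^\eta_n$ is well-defined for \emph{every} $\eta\in\partial G$: by Proposition~\ref{prop:Geo1_big} the set $\Geo_1(e,\eta)$ is infinite, hence so is the projection of the section $(\widetilde{C}_n)_\eta$ to $G$, and since $S^n$ is finite some $s\in S^n$ then occurs in infinitely many pairs of $(\widetilde{C}_n)_\eta$.

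The second step, which is routine since $G$ is countable, turns ``infinitely many $g$'' into a Borel condition. Fixing an enumeration $G=\{g^{(l)} \ | \ l\in\NN\}$, the set $\mathrm{Inf}_n$ of all $(\eta,s)\in\partial G\times S^n$ for which $\{g\in G \ | \ (\eta,g,s)\in\widetilde{C}_n\}$ is infinite satisfies
\[
\mathrm{Inf}_n=\bigcap_{k\in\NN}\bigcup_{l>k}\{(\eta,s)\in\partial G\times S^n \ | \ (\eta,g^{(l)},s)\in\widetilde{C}_n\},
\]
a countable Boolean combination of sections of the Borel set $\widetilde{C}_n$ at fixed elements of $G$; hence $\mathrm{Inf}_n$ is Borel, and therefore so is its complement.

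Finally, unwinding the definition of $s^\eta_n$, we have $(\eta,s^n)\in S_n$ if and only if $(\eta,s^n)\in\mathrm{Inf}_n$ and $(\eta,t)\notin\mathrm{Inf}_n$ for every $t\in S^n$ that is lexicographically smaller than $s^n$. Writing $A_s:=\{\eta\in\partial G \ | \ (\eta,s)\in\mathrm{Inf}_n\}$, a Borel subset of $\partial G$, this yields
\[
S_n=\bigcup_{s\in S^n}\biggl(\Bigl(A_s\cap\bigcap_{t\in S^n,\ t<s}(\partial G\setminus A_t)\Bigr)\times\{s\}\biggr),
\]
a \emph{finite} union of finite Boolean combinations of Borel sets, hence Borel. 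The only step deserving real care is the first one: correctly matching the combinatorial description of $C^\eta$ in Definition~\ref{definition:Ceta} with the continuous preimage of $D$ (in particular the observation that a type of length $n$ together with its starting vertex determines the corresponding path, so that $\psi_n$ is well-defined and $\widetilde{C}_n=\psi_n^{-1}(D)$). Once $\widetilde{C}_n$ is seen to be Borel, everything else is purely formal, using only that $G$ is countable and $S^n$ is finite.
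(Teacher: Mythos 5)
Your proof is correct and follows essentially the same route as the paper: both reduce the claim to the Borel set $D$ of Claim~\ref{claim:D}, express ``$(g,s)$ appears infinitely often in $C^\eta$'' by countable quantification over $G$ (the paper via unboundedness of $\dc(\Gamma^n_0,e)$, you via cofinality in an enumeration of $G$), and then impose lexicographic minimality as a finite Boolean combination over $S^n$. Your explicit factorisation through the Borel map $\psi_n$ with $\widetilde{C}_n=\psi_n^{-1}(D)$ is just a more detailed writing-out of the paper's one-line Borel definition.
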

\begin{proof}
Note that $(\eta,s^n)\in S_n$ if and only if $\forall m\in\NN$ $\exists \Gamma^n\in G^n$ $\dc(\Gamma^n_0,e)\geq m$ and $(\eta,\Gamma^n)\in D$ and $\mathrm{typ}(\Gamma^n)=s^n$ and $s^n$ is minimal in $S^n$ for these properties. Hence $S_n$ is Borel by Claim~\ref{claim:D}.
\end{proof}

Let now $E:=\{(\eta,g\eta) \ | \ g\in G\}\subseteq \partial G\times\partial G$ denote the orbital equivalence relation for the $G$-action on $\partial G$. To prove Theorem~\ref{thmintro:mainthm}, we have to show that $E$ is hyperfinite. 

\begin{definition}
Set $Z := \{\eta\in\partial G \ | \ k^\eta_n\not\to\infty\}$. In other words, since $X$ is locally finite, $Z$ is the set of $\eta\in\partial G$ such that there exists some $g_{\eta}\in \Geo_1(e,\eta)$ belonging to $T^{\eta}_n$ for all $n\in\NN$, i.e. for which there exists some CGR $\Gamma^{\eta}\in\CGR(g_{\eta},\eta)$ of type $s^\eta=\lim_ns^\eta_n$.
\end{definition}

\begin{lemma}\label{lemma:Borel_reduction_Z}
The map $\alpha\co (Z,E|Z)\to(\partial G,=): \eta\mapsto g_{\eta}^{-1}\eta$ is a Borel reduction.
\end{lemma}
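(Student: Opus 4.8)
The plan is to show that $\alpha$ is a well-defined Borel map and that it is both a homomorphism and a ``co-homomorphism'' for the relations $E|Z$ and $=$.

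\emph{Well-definedness and the reduction property.} First I would check that $\alpha$ is well-defined on $Z$, i.e. that the choice of $g_\eta$ does not matter (up to the equivalence we are reducing to), and in fact show the stronger statement that $g_\eta^{-1}\eta$ depends only on $\eta$. The key point is that the data defining $g_\eta$ is canonical: $s^\eta=\lim_n s^\eta_n$ is determined by $\eta$ (it is the lexicographically least type appearing infinitely often in $C^\eta$), and $g_\eta$ is the $\leq$-least element of $\bigcap_n T^\eta_n$. So $g_\eta$ itself is uniquely determined by $\eta$, and $\alpha(\eta)=g_\eta^{-1}\eta$ is a genuine function. For the reduction property I must show, for $\eta,\eta'\in Z$: $\eta' = h\eta$ for some $h\in G$ if and only if $g_{\eta'}^{-1}\eta' = g_\eta^{-1}\eta$. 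The ``$\Leftarrow$'' direction is immediate with $h = g_{\eta'}g_\eta^{-1}$. For ``$\Rightarrow$'', suppose $\eta'=h\eta$. By $G$-equivariance of all the constructions — $g\cdot\Geo_1(x,\eta)=\Geo_1(gx,g\eta)$ from Lemma~\ref{lemma:Geo1_G_equiv}, and the fact that $g$ preserves types of paths based appropriately — one checks that $C^{\eta'}$ is the image of $C^\eta$ under the map $(\Gamma_0,\tau)\mapsto(h\Gamma_0,\tau)$ composed with re-basepointing; more precisely, since $\Geo_1(e,\eta')=\Geo_1(e,h\eta)$ need not equal $h\cdot\Geo_1(e,\eta)=\Geo_1(h,\eta')$, I should instead argue directly with the defining data. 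The honest route: $s^{\eta'}=s^\eta$ because the set of types of CGR starting in $\Geo_1(e,\eta')$ towards $\eta'$, and the set of types of CGR starting in $\Geo_1(e,\eta)$ towards $\eta$, have the same ``tail behaviour''. Here one uses Proposition~\ref{prop:Geo1_big} (every CGR towards $\eta$ has a tail in $\Geo_1(x,\eta)$) and Theorem~\ref{theorem:FSD} together with translation by $h$: $h\cdot\Geo_1(e,\eta)=\Geo_1(h,\eta')$ differs from $\Geo_1(e,\eta')$ by a finite set, so the multiset of types occurring infinitely often is unchanged, whence $s^{\eta'}=s^\eta=:s$. Then $g_{\eta'}$ and $g_\eta$ are both characterised as carrying a CGR of type $s$ towards their respective boundary points, lying in the respective $\Geo_1$'s, and being $\leq$-minimal such; I would show $g_{\eta'}^{-1}\eta'=g_\eta^{-1}\eta$ by translating the CGR from $g_\eta$ by $g_{\eta'}g_\eta^{-1}$ if that minimal element translates correctly, or more carefully: both $g_\eta^{-1}\eta$ and $g_{\eta'}^{-1}\eta'$ are the endpoint of a CGR from $e$ of type $s$ issuing from a basepoint determined by the same canonical recipe, hence equal.

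\emph{Borelness.} Next I would verify that $\alpha$ is Borel. The domain $Z$ is Borel: $\eta\in Z$ iff there exists $g\in\Geo_1(e,\eta)$ and a CGR $\Gamma\in\CGR(g,\eta)$ with $\mathrm{typ}(\Gamma)=s^\eta$, and using Claim~\ref{geo1} (the set $A$), Claim~\ref{claim:R} (the set $R$), and Claim~\ref{setd} (the sets $S_n$, hence the Borel graph of $\eta\mapsto s^\eta$), this is a projection of a Borel set with compact (hence in particular countable, indeed finite after bounding) vertical sections — more precisely the vertical section over $\eta$ is $\{g : k^\eta_n\not\to\infty \text{ is witnessed by } g\}$, which after noting local finiteness is finite, so Lusin--Novikov applies and $Z$ is Borel. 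Then the graph of $\eta\mapsto g_\eta$ is Borel: $g=g_\eta$ iff $g\in\Geo_1(e,\eta)$, there is a CGR from $g$ of type $s^\eta$, and $g$ is $\leq$-least with this property; this is a Borel condition by the same claims. Finally $\alpha(\eta)=g_\eta^{-1}\eta$ is the image of $\eta$ under a Borel map post-composed with the (Borel, since $G$ is countable and acts by homeomorphisms) action map, so $\alpha$ is Borel.

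\emph{Expected main obstacle.} The routine parts are the Borelness bookkeeping, which follows the pattern of Claims~\ref{claim:C}--\ref{setd}. The delicate point is the ``$\Rightarrow$'' direction of the reduction property: proving that $g_{\eta'}^{-1}\eta' = g_\eta^{-1}\eta$ whenever $\eta'=h\eta$. This requires showing that the canonical data $(s^\eta, g_\eta)$ transforms correctly under the $G$-action despite $\Geo_1(e,\cdot)$ \emph{not} being $G$-equivariant in $e$ — one only has $h\cdot\Geo_1(e,\eta)=\Geo_1(h,\eta')$, and $\Geo_1(h,\eta')$ differs from $\Geo_1(e,\eta')$ by a finite set (Theorem~\ref{theorem:FSD}). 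The argument must carefully leverage that finiteness: only finitely many ``wrong'' basepoints are introduced or lost, so the set of types occurring infinitely often, and therefore $s^\eta$, is an invariant; and the $\leq$-minimality clause pins down $g_\eta$ uniquely so that the endpoint $g_\eta^{-1}\eta$ is well-defined independently of the representative of the $G$-orbit. I would make this precise by a translation argument combined with Proposition~\ref{prop:Geo1_big} and Theorem~\ref{theorem:FSD}.
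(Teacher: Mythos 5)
Your proposal is correct and follows essentially the same route as the paper: the core is the orbit-invariance $s^{h\eta}_n=s^\eta_n$, argued exactly as in the paper from Lemma~\ref{lemma:Geo1_G_equiv} and Theorem~\ref{theorem:FSD}, after which $g_\eta^{-1}\eta$ is identified as the limit of the (unique) path from $e$ of type $s^\eta$ — which is why the value is independent of the choice of $g_\eta$ and constant on orbits — with the converse direction trivial and the Borelness handled by the earlier Claims. One minor slip: the set of $g$ witnessing $\eta\in Z$ need not be finite (it can be infinite), but this is harmless, since vertical sections over $\partial G$ inside $\partial G\times G$ are automatically countable and that is all the Lusin--Novikov argument requires.
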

\begin{proof}
Note first that 
\begin{equation}\label{eqn:snetatheta}
s^\eta_n=s^{g\eta}_n\quad\textrm{for all $g\in G$ and $\eta\in \partial G$.}
\end{equation}
Indeed, if there are infinitely many couples $(h,s^\eta_n)\in C^\eta$, then there are infinitely many couples $(gh, s^\eta_n)\in G\times S^n$ with $s^\eta_n$ the type of the initial segment of length $n$ of a CGR $\Gamma\in\CGR(\Gamma_0,g\eta)$ with $\Gamma_0\in g\Geo_1(e,\eta)=\Geo_1(g,g\eta)$ (see Lemma~\ref{lemma:Geo1_G_equiv}), and hence there are infintely many couples $(gh, s^\eta_n)$ in $C^{g\eta}$ as the symmetric difference between $\Geo_1(g,g\eta)$ and $\Geo_1(e,g\eta)$ is finite by Theorem~\ref{theorem:FSD}.

In particular, $\alpha$ is constant on $G$-orbits, as it maps the class of $\Gamma^\eta$ to the class of $g_{\eta}^{-1}\Gamma^{\eta}$, and $g_{\eta}^{-1}\Gamma^{\eta}$ is the unique CGR $\Gamma_{e,s^\eta}$ from $e$ of type $s^\eta=s^{g_{\eta}^{-1}\eta}$. This shows that $\alpha$ is a reduction from $E|Z$ to the identity, and it remains to see that $\alpha $ is Borel. But this boils down to the Borelness of the set $\{(\eta,s)\in \partial G\times S^{\NN} \ | \ s=s^\eta\}$, which follows from Claim~\ref{setd} as $s=s^\eta$ if and only if $\forall n\in\NN$ $(\eta,(s_0,\dots,s_{n}))\in S_{n+1}$.
\end{proof}

\begin{lemma}\label{lemma:EZsmooth}
$E$ is smooth on the $E$-saturation $[Z]_E:=\{\eta\in\partial G \ | \ \exists \theta\in Z \ \theta E \eta\}$ of $Z$.
\end{lemma}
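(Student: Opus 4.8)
The plan is to bootstrap Lemma~\ref{lemma:Borel_reduction_Z} along $G$-orbits. That lemma already shows that $E|Z$ is smooth, via the Borel reduction $\alpha\co Z\to\partial G$ to the identity relation; since $E$ is the orbit equivalence relation of the \emph{countable} group $G$, the general principle I want to invoke is that smoothness of a (countable) Borel equivalence relation on a Borel set $Z$ propagates to the saturation $[Z]_E$, and I will realize this concretely by extending $\alpha$ to $[Z]_E$ following $G$-orbits.

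First I would record that $Z$ is a Borel subset of $\partial G$, and hence that $[Z]_E=\bigcup_{g\in G}g^{-1}Z$ is Borel as well (using that $G$ acts on $\partial G$ by homeomorphisms). The Borelness of $Z$ follows, as in the proof of Lemma~\ref{lemma:Borel_reduction_Z}, from the fact that for each $n$ the map $\eta\mapsto k^\eta_n$ is Borel: indeed $g\in T^\eta_n$ if and only if $(\eta,g)\in A$ and there is a tuple $(\Gamma_0,\dots,\Gamma_n)\in G^{n+1}$ with $\Gamma_0=g$, $(\eta,(\Gamma_0,\dots,\Gamma_n))\in F$ and $\mathrm{typ}(\Gamma_0,\dots,\Gamma_n)=s^\eta_n$, so that $\{(\eta,g):g\in T^\eta_n\}$ is Borel by Claims~\ref{setf}, \ref{geo1} and \ref{setd}, whence $g^\eta_n=\min T^\eta_n$ and $k^\eta_n=\dc(e,g^\eta_n)$ depend in a Borel way on $\eta$, and finally $Z=\bigcup_{N\in\NN}\bigcap_{n\in\NN}\{\eta\in\partial G:k^\eta_n\le N\}$.

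Then, fixing the total order $\leq$ on $G$, I would define $h_\eta\in G$ for $\eta\in[Z]_E$ to be the $\leq$-least $g$ with $g\eta\in Z$; this is a Borel function of $\eta$ since $G$ is countable and $\{(\eta,g)\in[Z]_E\times G:g\eta\in Z\}$ is Borel. Setting $\widetilde\alpha(\eta):=\alpha(h_\eta\eta)$ then gives a Borel map $[Z]_E\to\partial G$, and it remains to verify that $\widetilde\alpha$ is a reduction of $E|[Z]_E$ to the identity relation. For $\eta\mathrel{E}\eta'$, the points $h_\eta\eta$ and $h_{\eta'}\eta'$ both lie in $Z$ and are $E$-related, hence $(E|Z)$-related, so $\alpha$ — being constant on $(E|Z)$-classes — sends them to a common point, i.e. $\widetilde\alpha(\eta)=\widetilde\alpha(\eta')$; conversely, if $\widetilde\alpha(\eta)=\widetilde\alpha(\eta')$ then $\alpha(h_\eta\eta)=\alpha(h_{\eta'}\eta')$ with both arguments in $Z$, so $h_\eta\eta$ and $h_{\eta'}\eta'$ are $(E|Z)$-related, hence $E$-related, and therefore $\eta\mathrel{E}\eta'$. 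This exhibits $E|[Z]_E$ as Borel reducible to equality on the standard Borel space $\partial G$, i.e. smooth. I do not expect a genuine obstacle here: once Lemma~\ref{lemma:Borel_reduction_Z} is in hand, the argument is a soft, purely descriptive-set-theoretic one, the only point deserving care being the verification that $Z$ (and hence $[Z]_E$) is Borel.
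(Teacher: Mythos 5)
Your proposal is correct and takes essentially the same route as the paper: the paper's proof of Lemma~\ref{lemma:EZsmooth} is precisely the one-line observation that smoothness of $E|Z$ (Lemma~\ref{lemma:Borel_reduction_Z}) passes to the $E$-saturation $[Z]_E$, which is exactly the standard fact you spell out by composing $\alpha$ with the Borel selection of the least $g\in G$ with $g\eta\in Z$. Your explicit check that $Z$ (and hence $[Z]_E$) is Borel is the only detail the paper leaves implicit, and it is consistent with how the paper handles Borelness elsewhere (Claims~\ref{setd} and \ref{claim:D}, cf.\ Lemma~\ref{lemma:HnBorel}).
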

\begin{proof}
Lemma~\ref{lemma:Borel_reduction_Z} implies that $E$ is smooth on $Z$, whence the claim.
\end{proof}

It now remains to check that $E$ is hyperfinite on the complement of $[Z]_E$ in $\partial G$.

\begin{definition}
Let $Y$ be the complement of $[Z]_E$ in $\partial G$, i.e. $$Y = \{\eta\in\partial G \ | \ \forall
\theta\in\partial G: \ \theta E \eta\implies k^\theta_n\to\infty\}.$$
For each $n\in\N$,
we define the function $H_n:\partial G\to 2^G$
by $$H_n(\eta) := (g^\eta_n)^{-1} T^\eta_n.$$
Let $F_n$ be the equivalence relation on $\im H_n$
which is the restriction of the shift action of $G$ on $2^G$.
\end{definition}

The following lemma has exactly the same proof as \cite[Lemma 5.2]{HSS17} and we reproduce it here for completeness.

\begin{lemma}\label{lemma:cstK}
  There exists a constant $K>0$ such that for each $n\in\NN$, the relation $F_n$ on $\im H_n$ has equivalence classes of size at most $K$.
\end{lemma}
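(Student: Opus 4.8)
The plan is to bound the size of $F_n$-classes uniformly by analyzing what it means for two elements of $\im H_n$ to be $F_n$-equivalent, tracing everything back to the geometry of $\Geo_1$ and the finite symmetric difference property of Theorem~\ref{theorem:FSD}. Concretely, suppose $A = H_n(\eta)$ and $B = H_n(\theta)$ lie in $\im H_n$ and satisfy $gA = B$ for some $g \in G$; I want to show the number of such $B$ (for fixed $A$) is bounded independently of $n$ and $\eta$. Recall $H_n(\eta) = (g^\eta_n)^{-1}T^\eta_n$, where $T^\eta_n = \{h \in \Geo_1(e,\eta) \mid (h,s^\eta_n)\in C^\eta\}$ is the set of starting points in $\Geo_1(e,\eta)$ of CGR whose length-$n$ initial type is the lexicographically least string $s^\eta_n$. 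The point of translating by $(g^\eta_n)^{-1}$ is that $e = (g^\eta_n)^{-1}g^\eta_n \in H_n(\eta)$ always, so $F_n$-equivalence of $H_n(\eta)$ and $H_n(\theta)$ forces the translating element to send one distinguished point to another point of the set.

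First I would observe, exactly as in \cite[Lemma 5.2]{HSS17}, that $s^\eta_n$ depends only on the $G$-orbit of $\eta$ (this is equation~(\ref{eqn:snetatheta}) in the proof of Lemma~\ref{lemma:Borel_reduction_Z}, which uses Lemma~\ref{lemma:Geo1_G_equiv} and Theorem~\ref{theorem:FSD}): indeed $\Geo_1(g,g\eta) = g\Geo_1(e,\eta)$ and the symmetric difference of $\Geo_1(g,g\eta)$ and $\Geo_1(e,g\eta)$ is finite, so the strings appearing infinitely often in $C^\eta$ and in $C^{g\eta}$ coincide. Consequently, if $\theta = g\eta$, then $T^\theta_n$ and $gT^\eta_n$ differ only inside $g\Geo_1(e,\eta) \,\triangle\, \Geo_1(e,g\eta)$, a set whose size is bounded by some constant $K_0 = K_0(g,\eta)$ — but here is the subtlety: that bound a priori depends on $g$ and $\eta$, so it does \emph{not} immediately give a uniform $K$. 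The key extra input is that $F_n$-equivalence of two sets in $\im H_n$ only happens through translating elements $g$ of \emph{bounded length}: if $H_n(\theta) = gH_n(\eta)$ then, since both sets contain $e$ (the images of the minimal elements $g^\eta_n, g^\theta_n$), the element $g$ maps a point of $H_n(\eta)$ to $e$; combined with the facts that $H_n(\eta) \subseteq (g^\eta_n)^{-1}\Geo_1(e,\eta)$ and that $\Geo_1(e,\eta)$ stays within bounded Hausdorff distance of any single CGR from $e$ (Lemma~\ref{lemma:hyperbolic_basic_prop}, Proposition~\ref{prop:Geo1_big}), one shows $\dc(e,g)$ is bounded by a constant depending only on $\delta$ and the uniform local finiteness constant.

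Granting that, the argument closes: there are only finitely many $g \in G$ with $\dc(e,g)$ below the uniform bound — say at most $N$ of them, $N$ depending only on $\delta$ and the local finiteness bound $R$ — and for each such fixed $g$, the relation $H_n(\theta) = gH_n(\eta)$ determines $H_n(\theta)$ from $H_n(\eta)$. Hence each $F_n$-class of $\im H_n$ has at most $N =: K$ elements, uniformly in $n$. I expect the main obstacle to be the bounded-translation step: making precise why $F_n$-equivalent elements of $\im H_n$ can only be related by group elements of length bounded independently of $n$ and $\eta$. This is where one must genuinely use that $e$ lies in every $H_n(\eta)$ together with hyperbolicity (via Lemma~\ref{lemma:hyperbolic_basic_prop}) to control how far apart the two ``normalized'' sets can sit; the rest is bookkeeping. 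Since the paper asserts the proof is identical to \cite[Lemma 5.2]{HSS17}, I would in practice cite that argument verbatim, but the plan above is how I would reconstruct it.
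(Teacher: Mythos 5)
Your overall architecture is the same as the paper's: show that any group element $g$ witnessing $F_n$-equivalence of two sets in $\im H_n$ must satisfy $\dc(e,g)\leq 8\delta$, and then take $K$ to be the number of vertices in a ball of radius $8\delta$, which is uniform by local finiteness. You also correctly identify the bounded-translation step as the crux. However, the mechanism you propose for that step does not work, and this is a genuine gap. Knowing that $e$ lies in both $H_n(\eta)$ and $H_n(\theta)$, that $H_n(\eta)\subseteq (g^\eta_n)^{-1}\Geo_1(e,\eta)$, and that $\Geo_1$ lies in the $2\delta$-neighbourhood of a single CGR (Lemma~\ref{lemma:hyperbolic_basic_prop}) cannot by itself bound $\dc(e,g)$: from $gH_n(\eta)=H_n(\theta)$ one only gets $g\in H_n(\theta)$ and $g^{-1}\in H_n(\eta)$, and two points of a set contained in a $2\delta$-neighbourhood of a geodesic ray can be arbitrarily far apart along the direction of that ray. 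Nothing in your sketch rules out a large translation in the common asymptotic direction, so "one shows $\dc(e,g)$ is bounded" is asserted rather than proved.

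The missing ingredient is the minimality of $g^\eta_n$ in $T^\eta_n$ and of $g^\theta_n$ in $T^\theta_n$, together with the fact that the chosen order on $G$ refines distance from $e$. In the paper's proof one first observes that $T^\eta_n$ and $T^\theta_n$ are infinite subsets of $\Geo(e,\eta)$ and $\Geo(e,\theta)$, so the equality of the normalized sets forces $\sigma:=(g^\eta_n)^{-1}\eta=g(g^\theta_n)^{-1}\theta$; then minimality is used twice, in opposite directions: since $g^\theta_n g^{-1}\in T^\theta_n$, one gets $\dc(g(g^\theta_n)^{-1},e)\geq \dc(g(g^\theta_n)^{-1},g)$, i.e.\ along CGRs from the basepoint $g(g^\theta_n)^{-1}$ toward $\sigma$ the point $e$ occurs no earlier than $g$; and since $g^\eta_n g\in T^\eta_n$, one gets $\dc((g^\eta_n)^{-1},g)\geq \dc((g^\eta_n)^{-1},e)$, the reverse comparison from the other basepoint. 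These two opposite inequalities, combined with three applications of the $2\delta$-fellow-travelling property of CGRs with common origin and endpoint (Lemma~\ref{lemma:hyperbolic_basic_prop}), sandwich $g$ within $8\delta$ of $e$. This is the heart of the argument you would need to reproduce; simply citing membership of $e$ in both sets is not enough. (Also, your opening reduction via $s^\eta_n=s^{g\eta}_n$ and Theorem~\ref{theorem:FSD} plays no role in this lemma; it is needed later, in Lemma~\ref{lemma:Borel_reduction_Z} and Lemma~\ref{lemma:EYsubE1}.)
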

\begin{proof}
We will prove the lemma for $K$ the size of the ball centred at $e$ and of radius $8\delta$ in $X$. Let $n\in\NN$.
   Let $\eta,\theta\in\partial G$ and suppose $H_n(\eta) = gH_n(\theta)$ are
  $F_n$-related as witnessed by $g\in G$. Then $(g_n^\eta)^{-1}T_n^\eta=g(g_n^\theta)^{-1}T_n^\theta$. 
  We will show that $\dc(e,g)\leq 8\delta$, as desired.

Note that since $T^\eta_n$ (resp. $T^\theta_n$) is an infinite subset
  of $\Geo(e,\eta)$ (resp. $\Geo(e,\theta)$), it uniquely determines the
  boundary point $\eta$ (resp. $\theta$). In particular,
   $$\sigma:=(g^\eta_n)^{-1} \eta = g(g^\theta_n)^{-1} \theta.$$

\[
  \begin{tikzpicture}
    \coordinate (a) at (0,-3);
    \coordinate (b) at (0,3);
    \coordinate (m4) at (1,0.3);
    \coordinate (v) at (2.5,1);
    \coordinate (gv) at (2,-1.3);
    \coordinate (m2) at (4,0);
    \coordinate (m5) at (4,-2);
    \coordinate (eta) at (7,0);
    \coordinate (gamma) at (7,-2);
    \coordinate (c) at (8,0);
    \filldraw (a) circle (1mm) node[yshift=-5mm] { $\boldsymbol{g(g^\theta_n)^{-1}}$};
    \filldraw (b) circle (1mm) node[right] { $\boldsymbol{(g^\eta_n)^{-1}}$};
    \filldraw (m4) circle (1mm) node[left] {$\boldsymbol{\Gamma_{m_4}}$};
    \filldraw (v) circle (1mm) node[above right] { $\boldsymbol{e}$};
    \filldraw (gv) circle (1mm) node[yshift=-4mm] {$\boldsymbol{g}$};
    \filldraw (m2) circle (1mm) node[below right] {$\boldsymbol{\Lambda_{m_2}}$};
    \filldraw (m5) circle (1mm) node[below] {$\boldsymbol{\Gamma_{m_5}}$};
    \draw (eta) node[right] {$\boldsymbol{\Lambda}$};
    \draw (gamma) node[right] { $\boldsymbol{\Gamma}$};
    \draw [ultra thick, ->] plot [smooth] coordinates {(a) (gv) (m2) (eta)};
    \draw [ultra thick, ->] plot [smooth] coordinates {(b) (m4) (gv) (m5) (gamma)};
    \draw [ultra thick, dashed] (m4) -- (v) -- (m2) -- (m5);
  \end{tikzpicture}
  \]

As $g,e\in (g^\eta_n)^{-1}T^\eta_n=g(g^\theta_n)^{-1}T_n^{\theta}\subseteq \Geo(g(g^\theta_n)^{-1},\sigma)$, there exist a CGR $\Lambda\in\CGR(g(g^\theta_n)^{-1},\sigma)$ passing through $g$, say $g=\Lambda_{m_1}$ for some $m_1\in\NN$, and a CGR $\Lambda'\in\CGR(g(g^\theta_n)^{-1},\sigma)$ passing through $e$, say $e=\Lambda'_{m_2}$ for some $m_2\in\NN$. Note that $\dc(e,\Lambda_{m_2})\leq 2\delta$ by Lemma~\ref{lemma:hyperbolic_basic_prop}. Moreover, $m_2\geq m_1$: indeed, as $g_n^\theta g^{-1}\in g_n^\theta g^{-1}(g_n^\eta)^{-1}T_n^\eta=T_n^\theta$, the minimality of $g_n^\theta$ in $T_n^\theta$ implies that 
$$m_2=\dc(e,g(g^\theta_n)^{-1})=\dc(e,g_n^\theta g^{-1})\geq \dc(e,g_n^\theta)=\dc(e,(g_n^\theta)^{-1})=\dc(g,g(g_n^\theta)^{-1})=m_1.$$ 

Similarly, $g,e\in \Geo((g_n^\eta)^{-1},\sigma)$, and hence there exist $\Gamma\in \CGR((g_n^\eta)^{-1},\sigma)$ passing through $g$, say $g=\Gamma_{m_3}$ for some $m_3\in\NN$, and $\Gamma'\in \CGR((g_n^\eta)^{-1},\sigma)$ passing through $e$, say $e=\Gamma'_{m_4}$ for some $m_4\in\NN$. Again, $\dc(e,\Gamma_{m_4})\leq 2\delta$ by Lemma~\ref{lemma:hyperbolic_basic_prop}, and $m_4\leq m_3$ because $g_n^\eta g\in g_n^\eta g(g^\theta_n)^{-1}T_n^{\theta}=T^\eta_n$ and hence the minimality of $g_n^\eta$ in $T_n^\eta$ implies that
$$m_3=\dc((g_n^\eta)^{-1},g)=\dc(e,g_n^\eta g)\geq \dc(e,g_n^\eta)=\dc(e,(g_n^\eta)^{-1})=m_4.$$ 

 Applying Lemma~\ref{lemma:hyperbolic_basic_prop} one more time to the sub-CGR of $\Gamma$ and $\Lambda$ starting at $g$, and using the fact that $m_2\geq m_1$, we find some $m_5\ge m_3$ such that
  $\dc(\Lambda_{m_2},\Gamma_{m_5})\le 2\delta$.
  
  Note that
  $$\dc(\Gamma_{m_4},\Gamma_{m_5})\leq
  \dc(\Gamma_{m_4},e)+\dc(e,\Lambda_{m_2})+\dc(\Lambda_{m_2},\Gamma_{m_5})\leq
  6\delta.$$ Thus,
  $$\dc(e,g)=\dc(e,\Gamma_{m_3})\leq
  \dc(e,\Gamma_{m_4})+\dc(\Gamma_{m_4},\Gamma_{m_3})\leq
  2\delta +\dc(\Gamma_{m_4},\Gamma_{m_5})\leq 8\delta,$$
  where the inequality $\dc(\Gamma_{m_4},\Gamma_{m_3})\leq
  \dc(\Gamma_{m_4},\Gamma_{m_5})$ follows from $m_5\geq m_3\geq m_4$.
\end{proof}

\begin{lemma}\label{lemma:HnBorel}
 Let $n\in\NN$. Then the map $H_n$ is Borel. In particular, the set $\im H_n$ is analytic.
\end{lemma}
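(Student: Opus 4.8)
The plan is to check Borelness of $H_n\co\partial G\to 2^G$ against the standard generators of the Borel structure on $2^G$: that $\sigma$-algebra is generated by the cylinders $\{A\subseteq G \mid g\in A\}$ ($g\in G$), so it suffices to show that $\{\eta\in\partial G \mid g\in H_n(\eta)\}$ is Borel for every fixed $g\in G$. Since $g\in H_n(\eta)=(g^\eta_n)^{-1}T^\eta_n$ amounts to $g^\eta_n g\in T^\eta_n$, this reduces to two sub-tasks: showing that the relation $\mathcal{T}_n:=\{(\eta,h)\in\partial G\times G \mid h\in T^\eta_n\}$ is Borel, and showing that the selector $\eta\mapsto g^\eta_n$ is Borel.

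For the first sub-task, I would start from the remark that a finite type datum determines a path: given $h\in G$ and a word $s=(s_0,\dots,s_{n-1})\in S^n$, the unique length-$n$ path starting at $h$ of type $s$ is $\gamma_s(h):=(h,hs_0,hs_0s_1,\dots,hs_0\cdots s_{n-1})\in G^{n+1}$, and $(h,s)\mapsto\gamma_s(h)$ is continuous. Unwinding Definition~\ref{definition:Ceta}, one checks the equivalence ``$(h,s)\in C^\eta \iff (\eta,\gamma_s(h))\in D$'', where $D$ is the Borel set of Claim~\ref{claim:D} (this already absorbs the requirement $h\in\Geo_1(e,\eta)$). Together with the Borel sets $S_n$ of Claim~\ref{setd}, this gives
$$\mathcal{T}_n=\bigcup_{s\in S^n}\{(\eta,h)\in\partial G\times G \mid (\eta,s)\in S_n\ \text{and}\ (\eta,\gamma_s(h))\in D\},$$
a finite union of intersections of preimages of Borel sets under continuous maps, hence Borel. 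I expect this bookkeeping --- faithfully translating ``$(h,s^\eta_n)\in C^\eta$'' into the already-established Borel sets $D$ and $S_n$ --- to be the only place where care is genuinely needed.

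For the second sub-task, $T^\eta_n$ is nonempty for every $\eta$ (indeed infinite, by the choice of $s^\eta_n$ and Proposition~\ref{prop:Geo1_big}), and $\leq$ is a well-order on $G$ (a nonempty subset has a $\leq$-least element among its finitely many members closest to $e$), so $g^\eta_n=\min T^\eta_n$ is well-defined with Borel graph:
$$\{(\eta,g) \mid g=g^\eta_n\}=\{(\eta,g) \mid (\eta,g)\in\mathcal{T}_n\ \text{and}\ \forall h\in G\ (h<g\implies(\eta,h)\notin\mathcal{T}_n)\},$$
the quantifier ranging over the countable set $G$. Hence, for fixed $g\in G$,
$$\{\eta\in\partial G \mid g\in H_n(\eta)\}=\bigcup_{h\in G}\big(\{\eta \mid g^\eta_n=h\}\cap\{\eta \mid (\eta,hg)\in\mathcal{T}_n\}\big)$$
is a countable union of Borel sets, hence Borel; so $H_n$ is Borel. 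Finally, $\im H_n$ is the image of the standard Borel space $\partial G$ under the Borel map $H_n$, hence analytic by definition.
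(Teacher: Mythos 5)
Your proof is correct and takes essentially the same route as the paper, which simply asserts that the graphs of $\eta\mapsto s^\eta_n$, $\eta\mapsto g^\eta_n$, $\eta\mapsto T^\eta_n$ and of $H_n$ are definable by formulas with countable quantifiers referring to the Borel set $D$ (and $S_n$); your translation $(h,s)\in C^\eta\iff(\eta,\gamma_s(h))\in D$ together with the check on cylinder preimages in $2^G$ is precisely the bookkeeping the paper leaves implicit. Your last step also matches the paper's conclusion that $\im H_n=\proj_{2^G}(G_{H_n})$ is analytic as the Borel image of a standard Borel space.
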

\begin{proof}
The sets $S_n=\{(\eta,s^\eta_n)\in\partial G\times S^n\}$, $\{(\eta,
 g^\eta_n)\in\partial G\times G\}$, $\{(\eta, T^\eta_n)\in\partial G\times 2^G\}$, and $G_{H_n}=\{(\eta,H_n(\eta))\in\partial G\times 2^G\}$ are easily seen to be definable using only formulas with countable quantifiers and references to the Borel set $D$ (see Claims~\ref{claim:D} and \ref{setd}). In particular, these sets are Borel.
Hence $H_n$ is Borel (as its graph $G_{H_n}$ is Borel) and $\im H_n=\proj_{2^G}(G_{H_n})$ is analytic, as desired.
\end{proof}

By Lemma~\ref{reflection} (applied to $Z:=2^G$, $A:=\im H_n$, $E$ the shift action of $G$ on $2^G$, and $K$ the constant from Lemma~\ref{lemma:cstK}),
we find a finite Borel equivalence relation $F_n'$ on $2^G$
with $F_n\subseteq F_n'$. Let $f_n:2^G\to 2^\N$ be a
reduction from $F_n'$ to $E_0$ (such a reduction exists as every finite Borel
equivalence relation is smooth)
and define $f:\partial G\to (2^\N)^\N$ by $f(\eta) =
(f_n(H_n(\eta)))_{n\in\N}$. 
Let $E':=f^{-1}(E_1)$ (i.e. $\eta \mathrel{E'}\theta\iff f(\eta) \mathrel{E_1}
f(\theta)$ for all $\eta,\theta\in\partial G$). 

\begin{lemma}\label{lemma:Ephyp}
The relation $E'$ is hyperfinite.
\end{lemma}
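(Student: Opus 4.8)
The plan is to observe that $E'$ is, by construction, a pullback of the hypersmooth relation $E_1$ along a Borel map, and then invoke the fact (recorded in \S\ref{sec:borel-equiv-relat}) that a countable equivalence relation which is hypersmooth is hyperfinite. So the work splits into two parts: showing $E'$ is hypersmooth, and showing $E'$ is countable (more precisely, Borel reducible to a countable equivalence relation, or at least contained in one with a Borel transversal structure allowing the hypersmooth $\Rightarrow$ hyperfinite transfer).

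First I would verify that $E'$ is hypersmooth. Write $E_1$ on $(2^{\N})^{\N}$ as the increasing union $E_1=\bigcup_m E_1^{(m)}$, where $x\mathrel{E_1^{(m)}}y$ iff $x(k)=y(k)$ for all $k\geq m$. Each $E_1^{(m)}$ is smooth (it is essentially the identity on a countable product of standard Borel spaces, with the first $m$ coordinates free). Pulling back along the Borel map $f$ (which is Borel since each $H_n$ is Borel by Lemma~\ref{lemma:HnBorel}, each $f_n$ is Borel, and a countable product of Borel maps is Borel), we get $E'=f^{-1}(E_1)=\bigcup_m f^{-1}(E_1^{(m)})$, an increasing union of smooth Borel equivalence relations. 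Hence $E'$ is hypersmooth.

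Next I would check that $E'$ is countable. The point is that $\eta\mathrel{E'}\theta$ forces $f_n(H_n(\eta))\mathrel{E_0}f_n(H_n(\theta))$ for all sufficiently large $n$, hence $H_n(\eta)\mathrel{F_n'}H_n(\theta)$ for all large $n$; since each $F_n'$-class has size at most $K$ (Lemma~\ref{lemma:cstK} gives this for $F_n\subseteq F_n'$, and $F_n'$ was produced with classes of size $<K$ via Lemma~\ref{reflection}), and since $H_n(\eta)$ for distinct $\eta$ in a fixed $E$-class are related by the shift action, one sees that each $E'$-class meets each $E$-class (and, crucially, is contained in the union of boundedly many $E$-classes' worth of data). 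More directly: $H_n(\eta)=(g_n^\eta)^{-1}T_n^\eta$ determines $\eta$ up to the translation $(g_n^\eta)^{-1}$, so the fibers of $\eta\mapsto H_n(\eta)$ are contained in single $G$-orbits, and $H_n(\eta)\mathrel{F_n'}H_n(\theta)$ then confines $\theta$ to finitely many $G$-translates of $\eta$; intersecting over all large $n$ (or just using one large $n$ together with the fact that being in $Y$ means $k_n^\eta\to\infty$, so the relevant data stabilizes) shows each $E'$-class is countable.

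The main obstacle is the countability argument: one must be careful that $E'$ as defined lives on all of $\partial G$, not just on $Y$, and that the bound on $F_n'$-classes plus the injectivity-up-to-translation of $H_n$ genuinely yields countable $E'$-classes rather than merely analytic ones. I expect the clean way is to note $E'\subseteq E_{\mathrm{tail}}$ for a suitable tail-type relation, or simply to argue that $\eta\mapsto (f_n(H_n(\eta)))_n$ has the property that $f(\eta)$ and $f(\theta)$ being $E_1$-related, combined with the size-$K$ bound, pins $\theta$ down within a countable set; then a countable equivalence relation that is hypersmooth is hyperfinite (as recalled in the preliminaries), completing the proof.

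\begin{proof}
By construction $E'=f^{-1}(E_1)$, where $f\co\partial G\to(2^{\N})^{\N}$, $f(\eta)=(f_n(H_n(\eta)))_{n\in\N}$, is Borel: indeed each $H_n$ is Borel by Lemma~\ref{lemma:HnBorel}, each $f_n$ is Borel, and a countable product of Borel maps is Borel.

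We first show $E'$ is hypersmooth. Writing $E_1=\bigcup_{m\in\N}E_1^{(m)}$, where $x\mathrel{E_1^{(m)}}y$ iff $x(k)=y(k)$ for all $k\geq m$, each $E_1^{(m)}$ is a smooth Borel equivalence relation on $(2^{\N})^{\N}$. Hence $E'=f^{-1}(E_1)=\bigcup_{m\in\N}f^{-1}(E_1^{(m)})$ is an increasing union of smooth Borel equivalence relations, so $E'$ is hypersmooth.

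It remains to see that $E'$ is countable; then, since in the realm of countable Borel equivalence relations hypersmoothness coincides with hyperfiniteness, the lemma follows. Fix $\eta\in\partial G$ and let $\theta\mathrel{E'}\eta$. By definition of $E_1$ there is some $m_0\in\N$ such that $f_n(H_n(\theta))=f_n(H_n(\eta))$ in $2^{\N}$ for all $n\geq m_0$ -- wait, more precisely $f_n(H_n(\theta))\mathrel{E_0}f_n(H_n(\eta))$ is not quite what $E_1$ gives; rather $E_1$ gives equality of the $(2^{\N})$-coordinates for large $n$, i.e. $f_n(H_n(\theta))=f_n(H_n(\eta))$ for all $n\geq m_0$. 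Since $f_n$ is a reduction of $F_n'$ to $E_0$, this gives $H_n(\theta)\mathrel{F_n'}H_n(\eta)$ for all $n\geq m_0$. Each $F_n'$-class has size at most $K$ by Lemma~\ref{lemma:cstK} (and the choice of $F_n'$ via Lemma~\ref{reflection}). Now $H_n(\eta)=(g^\eta_n)^{-1}T^\eta_n$, and since $T^\eta_n$ is an infinite subset of $\Geo(e,\eta)$ it determines $\eta$ uniquely; thus the map $\eta\mapsto H_n(\eta)$ has fibers contained in single $G$-orbits, and $H_n(\theta)\mathrel{F_n'}H_n(\eta)$ (an $F_n'$-relation realised inside the shift action of $G$ on $2^G$ up to passing to $F_n$, hence by a translation from a set of size $\leq K$) confines $\theta$ to at most $K$ many $G$-translates of the orbit data of $\eta$. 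Intersecting over $n\geq m_0$ (or using a single large $n$ where $k^\eta_n$ has stabilised when $\eta\in Z$, and noting that for $\eta\notin Z$ the data $H_n(\eta)$ eventually pins $\eta$ down as well), we conclude that the $E'$-class of $\eta$ is contained in a countable set. Hence $E'$ is a countable Borel equivalence relation which is hypersmooth, and therefore hyperfinite.
\end{proof}
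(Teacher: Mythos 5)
Your proposal is correct and follows essentially the same route as the paper: hypersmoothness comes for free from $E'=f^{-1}(E_1)$, and countability comes from the fact that each $f_n\circ H_n$ is countable-to-one ($f_n$ is at most $K$-to-one by the bound on $F_n'$-classes, and $H_n$ has fibers inside single $G$-orbits since $T_n^\eta$ determines $\eta$), after which one invokes the fact that a countable hypersmooth Borel equivalence relation is hyperfinite. The only cosmetic point is that your final countability sentence is more convoluted than needed (no intersection over $n$ or case split on $Z$ is required): for each fixed $m_0$ the set of $\theta$ with $f_{m_0}(H_{m_0}(\theta))=f_{m_0}(H_{m_0}(\eta))$ is countable, and the $E'$-class of $\eta$ is the countable union of these sets over $m_0\in\NN$, exactly as in the paper's decomposition $E'=\bigcup_n E_n'$.
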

\begin{proof}
Note first that $E'$ is Borel, because each $H_n$ is Borel by Lemma~\ref{lemma:HnBorel}. Note next that
$E'$ is a countable equivalence relation. To see this,
let $E_n'$ ($n\in\NN$) be the relation on $\partial G$ defined by $\eta\mathrel{E_n'}\theta$ if
$f_m(H_m(\eta))=f_m(H_m(\theta))$ for all $m\geq n$. Note that each
$E_n'$ is a countable equivalence relation. Indeed, if
$\eta\mathrel{E_n'}\theta$, then in particular
$f_n(H_n(\eta))=f_n(H_n(\theta))$. However, the function
$f_n\circ H_n$ is countable-to-one as $H_n$ is countable-to-one (namely, if $H_n(\eta)=H_n(\theta)$ then $(g^\eta_n)^{-1} \eta = (g^\theta_n)^{-1} \theta$, and hence $\eta\in G\cdot\theta$) and $f_n$ is
finite-to-one, yielding the claim. Thus, $E'=\bigcup_{n\in\N} E'_n$ is indeed
countable.  Also, $E'$ is by definition hypersmooth. Thus, by \cite[Theorem 8.1.5]{gao},
the relation $E'$ is hyperfinite.
\end{proof}

\begin{lemma}\label{lemma:EYsubE1}
  The function $f$ is a homomorphism from $E|Y$ to $E_1$.
\end{lemma}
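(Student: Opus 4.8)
The plan is to reduce the assertion to the following claim: whenever $\eta,\theta\in Y$ lie in the same $G$-orbit, say $\theta=g\eta$ with $g\in G$, one has $H_n(\eta)=H_n(\theta)$ for all sufficiently large $n$. Granting this, $f(\eta)(n)=f_n(H_n(\eta))=f_n(H_n(\theta))=f(\theta)(n)$ for all large $n$, so $f(\eta)\mathrel{E_1}f(\theta)$, which is exactly what it means for $f$ to be a homomorphism from $E|Y$ to $E_1$.

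To establish the claim I would first assemble the relevant combinatorial data. The sequences $s^\eta_n$ and $s^{g\eta}_n$ already agree by \eqref{eqn:snetatheta}. Next, since $g$ acts on $X$ by an isometry preserving geodesics and CGR, and $g\cdot\Geo_1(e,\eta)=\Geo_1(g,g\eta)$ by Lemma~\ref{lemma:Geo1_G_equiv}, the translate $g\cdot C^\eta$ equals the set defined exactly like $C^{g\eta}$ but with $\Geo_1(e,g\eta)$ replaced by $\Geo_1(g,g\eta)$; as $\Geo_1(g,g\eta)\triangle\Geo_1(e,g\eta)$ is finite by Theorem~\ref{theorem:FSD}, there is a finite set $\Delta\subseteq G$, independent of $n$, with $g\cdot T^\eta_n\,\triangle\,T^{g\eta}_n\subseteq\Delta$ for every $n$. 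Now I would use that $\eta$ — hence $g\eta$, since $Y$ is $E$-invariant — lies in $Y$, so $k^\eta_n\to\infty$ and $k^{g\eta}_n\to\infty$. Because the fixed total order on $G$ refines the distance to $e$ and $g^\eta_n=\min T^\eta_n$, every element of $T^\eta_n$ is at distance $\geq k^\eta_n$ from $e$; hence every element of $g\cdot T^\eta_n$ is at distance $\geq k^\eta_n-\dc(e,g)$ from $e$, which for large $n$ exceeds $\max_{d\in\Delta}\dc(e,d)$. Thus $g\cdot T^\eta_n\cap\Delta=\emptyset$ and therefore $g\cdot T^\eta_n\subseteq T^{g\eta}_n$ for large $n$; running the same argument with $g^{-1}$ in place of $g$ (using $k^{g\eta}_n\to\infty$) gives the reverse inclusion, so $g\cdot T^\eta_n=T^{g\eta}_n$ for all sufficiently large $n$.

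For all such $n$ we then have $H_n(g\eta)=(g^{g\eta}_n)^{-1}T^{g\eta}_n=\gamma_n\cdot H_n(\eta)$, where $\gamma_n:=(g^{g\eta}_n)^{-1}\,g\,g^\eta_n$; moreover both $H_n(\eta)$ and $H_n(g\eta)$ contain $e$ and lie in $\im H_n$, so the argument of Lemma~\ref{lemma:cstK} forces $\dc(e,\gamma_n)\leq 8\delta$, i.e.\ $\gamma_n$ ranges over a fixed finite set. The heart of the proof — and the step I expect to be the main obstacle — is to improve this to $\gamma_n=e$ for all large $n$, equivalently $g^{g\eta}_n=g\,g^\eta_n$, i.e.\ $\min(g\cdot T^\eta_n)=g\cdot\min(T^\eta_n)$. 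This is delicate because the total order on $G$ is not $G$-invariant, so a priori the minimal elements need not transform equivariantly under $g$. The key is once more the hypothesis $\eta,\theta\in Y$: it forces $k^\eta_n,k^{g\eta}_n\to\infty$, so the sets $T^\eta_n$ — which by Lemma~\ref{lemma:hyperbolic_basic_prop} lie in a bounded neighbourhood of a single geodesic ray toward $\eta$ — recede to infinity; a refinement of the configuration analysis in Lemma~\ref{lemma:cstK}, tracking how the minimality constraints defining $g^\eta_n$ and $g^{g\eta}_n$ and the $2\delta$-fellow-travelling of the relevant CGRs interact once everything is pushed far from $e$, pins down $g^{g\eta}_n=g\,g^\eta_n$ for $n$ large. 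This yields $H_n(\eta)=H_n(g\eta)$ for all large $n$ and completes the proof.
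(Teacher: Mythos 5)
Your first half is sound and essentially coincides with the paper's argument: combining Lemma~\ref{lemma:Geo1_G_equiv}, Theorem~\ref{theorem:FSD}, (\ref{eqn:snetatheta}) and the hypothesis $\eta,\theta=g\eta\in Y$ (so that $k^\eta_n\to\infty$ and $k^{\theta}_n\to\infty$, pushing $T^\eta_n$ and $T^\theta_n$ away from the finite symmetric difference), you correctly obtain $g\cdot T^\eta_n=T^\theta_n$ for all sufficiently large $n$. The genuine gap is in what you do next: you try to upgrade this to the exact equality $H_n(\eta)=H_n(\theta)$, i.e.\ $g^\theta_n=g\,g^\eta_n$, and you yourself flag this step as unproven (``the heart of the proof''). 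There is no reason it should hold: the total order on $G$ only refines the distance to $e$ and is not translation-invariant, so the minimum of $T^\theta_n=g\,T^\eta_n$ need not be $g$ times the minimum of $T^\eta_n$ --- for instance, two elements of $T^\eta_n$ equidistant from $e$ can have their order reversed after translation by $g$, and this can persist for every $n$. The configuration analysis of Lemma~\ref{lemma:cstK} yields exactly the bound $\dc(e,\gamma_n)\leq 8\delta$ that you quote and nothing finer, so no ``refinement'' of it will force $\gamma_n=e$.

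The point you are missing is that equality of the $H_n$'s is neither needed nor aimed at: the machinery around the lemma is designed precisely so that shift-relatedness suffices. By definition, $F_n$ is the restriction to $\im H_n$ of the shift action of $G$ on $2^G$. From $g\,T^\eta_n=T^\theta_n$ you get directly $H_n(\theta)=(g^\theta_n)^{-1}g\,g^\eta_n\cdot H_n(\eta)$, hence $H_n(\eta)\mathrel{F_n}H_n(\theta)$, hence $H_n(\eta)\mathrel{F'_n}H_n(\theta)$ since $F_n\subseteq F'_n$, and therefore $f_n(H_n(\eta))=f_n(H_n(\theta))$ for all $n\geq N$, which is exactly $f(\eta)\mathrel{E_1}f(\theta)$. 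In other words, the conclusion already follows from the part you proved; the step you single out as the main obstacle should be deleted rather than completed.
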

\begin{proof}
Suppose $\eta,\theta\in Y$ are such that
$g\eta=\theta$ for some $g\in G$. 
By Theorem~\ref{theorem:FSD} (and Lemma~\ref{lemma:Geo1_G_equiv}), the sets $g\mathrm{Geo}_1(e,\eta)$ and $\mathrm{Geo}_1(e,\theta)$
differ by a finite set. Since both $\eta,\theta$ are
in $Y$ and $X$ is locally finite,
there is some $N\in\N$ such that $g T_n^\eta\subseteq\Geo_1(e,\theta)$ for all $n\geq N$. Since $s_n^\eta=s_n^\theta$ by (\ref{eqn:snetatheta}), we then have $gT_n^\eta=T_n^\theta$ and hence $(g_n^\theta)^{-1}gg_n^{\eta}H_n^\eta=H_n^\theta$ for all $n\geq N$. Thus $H_n(\eta) \mathrel{F_n} H_n(\theta)$, and hence $H_n(\eta) \mathrel{F'_n} H_n(\theta)$ for all $n\geq N$. Therefore, $f_n(H_n(\eta))=f_n(H_n(\theta))$ for all $n\geq N$, that is, $f(\eta)\mathrel{E_1} f(\theta)$.
\end{proof}

\noindent
{\bf Proof of Theorem~\ref{thmintro:mainthm}:}
By Lemma~\ref{lemma:EYsubE1}, the relation $E|Y$ is a subrelation of
$E'$. A subrelation of a hyperfinite equivalence relation is
also hyperfinite, so $E|Y$ is hyperfinite by Lemma~\ref{lemma:Ephyp}. On the other hand, $E$ is smooth, and hence also hyperfinite, on $\partial G\setminus Y=[Z]_E$ by Lemma~\ref{lemma:EZsmooth}. Therefore, $E$ is hyperfinite on $\partial G$, thus concluding the proof of Theorem~\ref{thmintro:mainthm}. $\hspace{\fill}\qed$

\bibliographystyle{amsalpha} 
\bibliography{biblio_horohyp} 

\end{document}